\numberwithin{equation}{section}
\newcommand{\rr}{\mathbb{R}}
\newcommand{\lan}{\langle}
\newcommand{\ran}{\rangle}
\newcommand{\be}{\begin{eqnarray*}}
\newcommand{\bel}{\begin{eqnarray}}
\newcommand{\ee}{\end{eqnarray*}}
\newcommand{\eel}{\end{eqnarray}}
\newcommand{\ba}{\begin{aligned}}
\newcommand{\ea}{\end{aligned}}
\newcommand{\de}{\Delta}
\newcommand{\al}{\alpha}
\newcommand{\na}{\nabla}
\newcommand{\mc}{\mathcal}
\newcommand{\ep}{\epsilon}
\newcommand{\ra}{\rightarrow}
\newcommand{\pa}{\partial}
\newcommand{\JJ}{\mathcal{J}}
\newtheorem{thm}{Theorem}[section]
\newtheorem{defn}{Definition}[section]
\newtheorem{lem}{Lemma}[section]
\newtheorem{pro}{Proposition}[section]
\newtheorem{exm}{Example}[section]
\theoremstyle{remark}
\newtheorem{rmk}{Remark}[section]
\renewcommand{\geq}{\geqslant}
\renewcommand{\leq}{\leqslant}
\newcommand{\myr}[1]{{#1}} 
\begin{document}
\title[Multi-species Patlak-Keller-Segel system]{Multi-species Patlak-Keller-Segel system}

\author{Siming He}
\address{Department of Mathematics, Duke University, Durham NC 27708}
\email{siming.he@math.duke.edu}

\author{Eitan Tadmor}
\address{Department of Mathematics and Institute for Physical Sciences \& Technology (IPST)\newline
University of Maryland, College Park MD 20742}
\email{tadmor@umd.edu}

\date{\today}

\subjclass{35K58, 35K40, 35Q92}

\keywords{Patlak-Keller-Segel, Chemotaxis, parabolic system.}

\thanks{\textbf{Acknowledgment.} Research was supported in part by NSF grants DMS16-13911, RNMS11-07444 (KI-Net) and ONR grants N00014-1812465 and N00014-21-12773.}
\date{\today}

\begin{abstract}
We study the regularity and large-time behavior of a crowd of species driven by chemo-tactic interactions. What distinguishes the different  species  is the way they interact with the rest of the crowd: the collective motion is driven by different chemical reactions which end up in a \emph{coupled system} of parabolic Patlak-Keller-Segel equations.   We show that  the  densities of the different species  diffuse to zero provided the chemical interactions between the different species  satisfy certain sub-critical condition; the latter  is intimately related to a log-Hardy-Littlewood-Sobolev inequality for systems due to Shafrir \& Wolansky. Thus for example, when two species interact, one of which has mass less than $4\pi$, then the 2-system stays smooth for all time independent of the total mass of the system, in sharp contrast with the well-known breakdown  of one specie with initial mass$> 8\pi$. 
\end{abstract}

\maketitle
\setcounter{tocdepth}{1}
\tableofcontents

\section{Introduction}
In this paper, we consider the multi-species parabolic-elliptic Patlak-Keller-Segel (PKS) system which models chemotaxis phenomena involving multiple bacteria species 
\begin{align}
\left\{\begin{array}{rrr}\ba \partial_t n_\al+&\na\cdot (\na c_\al n_\al)=\de n_\al,\quad \al\in \mathcal{I},\\
-\de c_\al & =\sum_{\beta\in\mathcal{I}}b_{\al\beta}n_\beta,\\
n_\al(x,t&=0)=n_{\al 0}(x), \quad x\in \rr^2.\ea\end{array}\right.\label{EQ:KS_multiple_groups}
\end{align}
Here $n_\al,c_\al$ denote the bacteria and the chemical densities respectively. The parameters $\al,\beta\in \mathcal{I}$ indicate different species of  bacteria/chemicals. The total number of species, which is denoted $|\mathcal{I}|$ throughout the paper, is assumed to be finite. The first equation in the system \eqref{EQ:KS_multiple_groups} describes the time evolution of the bacteria density $n_\al$ subject to chemical density distribution $c_\al$ and diffusion. The second equation governs the evolution of the chemical density $c_\al$, which is determined by the collective effect of different species of bacteria $n_\beta$.  The \emph{chemical generation coefficients} $b_{\al\beta}$ represent the relative impact of the bacteria distribution $n_\beta$ on the generation of the chemical $c_\al$.

Remark that system \eqref{EQ:KS_multiple_groups} covers the more general setup, 
in which each species has its own sensitivity to the chemo-attractant, quantified by the positive constant parameters $\{\chi_\al\}$,
\begin{align}\label{eq:PKSwChi}\tag*{(\theequation)$^{\prime}$}
\left\{\begin{array}{rrr}\ba \partial_t n_\al+&\chi_\al \na\cdot (\na c_\al n_\al)=\de n_\al,\quad \al\in \mathcal{I},\\
-\de c_\al & =\sum_{\beta\in\mathcal{I}}b_{\al\beta}n_\beta,\\
n_\al(x,t&=0)=n_{\al 0}(x), \quad x\in \rr^2.\ea\end{array}\right.
\end{align}
Indeed, if we let $\eta_\al>0$ be scaling parameters at our disposal, we  set $n'_\al:=\eta_\al n_\al$ and $c'_\al:=\chi_\al c_\al$, then \ref{eq:PKSwChi} is reduced to \eqref{EQ:KS_multiple_groups} for the `tagged' variables, $(n'_\al,c'_\al)$, with re-scaled  generation array,
$b'_{\al\beta}=\chi_\al b_{\al\beta}\eta^{-1}_\beta$. In particular, choosing $\eta_\beta=1/\chi_\beta$ shows that if $\mathbf{B}=\{b_{\al\beta}\}$ is symmetric, then so is $\mathbf{B}'$.

In the last few years, social interaction within biofilms --- a special form of bacteria colonies --- has aroused increasing interest among the biology and biophysics community, \cite{CTGFD2017}. In a biofilm, billions of bacteria of different species live together and create hard-to-remove  infections. Different cells in the biofilm specialize in various tasks, acquiring food, defending  colony and preserving genetic information included. Chemical signals and ion signals are generated to communicate information within these bacteria colonies.  The multi-species PKS model \eqref{EQ:KS_multiple_groups} serves as an attempt to understand the biofilm. Moreover, in the  Chemotaxis experiment, the bacteria involved have large genetic variation. For example, E.coli only share $30\%$ of their genes. Equation \eqref{EQ:KS_multiple_groups} also serves as a more accurate model than  single species dynamics, taking into account the possible genetic variation appeared in the experiments.

We recall the large literature on the single species PKS model  \eqref{EQ:KS_multiple_groups} ($|\mathcal{I}|=1$), referring the interested reader to the review \cite{Hortsmann} and the following works \cite{BedrossianIA10}--\cite{CalvezCorriasEbde12},\cite{corrias2014existence}--\cite{CorriasPerthame08},\cite{JagerLuckhaus92},\cite{HillenPainter09},\cite{Naito06},\cite{NagaiSenbaYoshida97},
\cite{TaoWinkler12},\cite{KozonoSugiyama09}. We summarize the essential results here. The preserved total mass of the solution $M:=|n(t)|_{L^1}=|n_0|_{L^1}$ determines the long time behavior. If the intitial data $n_0$ has subcritical mass $M<8\pi$ and finite second moment, the unique global smooth solutions exist for all time, \cite{BlanchetEJDE06},\cite{CarrilloRosado10},\cite{EganaMischler16}. If $M$ is strictly greater than $8\pi$ and the second moment is finite, solution blows up in finite time, \cite{JagerLuckhaus92},\cite{Nagai95},\cite{BlanchetEJDE06}. If $M=8\pi$, solution aggregates to a Dirac mass as time tends to infinity, \cite{BlanchetCarrilloMasmoudi08}.

The multi-species PKS equation \eqref{EQ:KS_multiple_groups} has attracted increasing interest in the last decade. Its study originates in Wolansky's work \cite{Wolansky02}.
Since then, a lot of research were carried out in the specific case of two interacting  species, \cite{ConcaEspejoVilches11},\cite{BaiWinkler16},\cite{KurganovMedvidova14},\cite{ArenasStevensVelazquez09},\cite{FasanoManciniPrimicerio04},\cite{EspejoVilchesConca13}. 
Even in the two-species case, the PKS systems \eqref{EQ:KS_multiple_groups} behave differently from the single-species ones. Consider the PKS equation  \eqref{EQ:KS_multiple_groups} subject to symmetric chemical generation coefficients
\begin{align}\label{eq:twospecies}
\mathbf{B}:=\left[\begin{array}{cc}b_{11}& b_{12}\\ b_{21}&b_{22}\end{array}\right]=\left[\begin{array}{cc}
0&1\\1&0
\end{array}\right],\end{align}
which models two species with \emph{cross-attractions}. We will prove that if one species has mass strictly less than $4\pi$, the solutions to \eqref{EQ:KS_multiple_groups} exist globally regardless of the mass of the other species. However, if some critical mass constraint is violated, the solutions undergo finite time blow-up. On the other hand, for some special non-symmetric chemical generation matrices, e.g.,
\begin{align*}
\mathbf{B}=\left[\begin{array}{cc}0&1\\-2&0\end{array}  \right],
\end{align*}
the solutions $\mathbf{n}:=\{n_\al\}_{\al\in\mathcal{I}}$ to \eqref{EQ:KS_multiple_groups} decay to zero unconditionally.

In this paper, we quantify a global well-posedness  condition for the multi-species PKS model \eqref{EQ:KS_multiple_groups} subject to symmetric chemical generation coefficients, and we characterize its  long time behavior (for both --- symmetric and non-symmetric cases), along the lines of our results announced in \cite{HeThesis}.

Before stating the main theorems, we list the basic assumptions and terminologies. The following initial conditions are always assumed
\begin{equation}\label{General_condition_for_initial_data}
\sum_{\al\in \mathcal{I}}n_{\al 0}(1+|x|^2)\in L^1(\rr^2);\quad n_{\al 0}\log n_{\al 0}\in L^1(\rr^2),\quad\forall\al\in \mathcal{I}.
\end{equation}
We store the chemical generation coefficients $b_{\al\beta}$'s and the masses $M_\al=|n_\al(\cdot,t)|_1\equiv|n_{\al 0}|_1$ in compact matrix/vector form:
\begin{align}
\mathbf{B}:=\{b_{\al\beta}\}_{\al,\beta\in \mathcal I}
, \quad \mathbf{B}_+:=\{(b_{\al\beta})_+\}_{\al,\beta\in \mathcal I}, \quad \mathbf{M}:=\{M_\al\}_{\al\in \mathcal{I}}, \  M_\al=|n_{\al 0}|_1,
\end{align}
where $(\cdot)_+$ denotes the positive part of the function. We introduce the function $Q_{\mathbf{B},\mathbf{M}}$ acting on subsets $\JJ$ of the index set $\mathcal{I}$,
\begin{align}
Q_{\mathbf{B},\mathbf{M}}[\JJ]=&\frac{\sum_{\al,\beta\in \JJ}b_{\al\beta}M_\al M_\beta}{\sum_{\al\in \JJ}M_\al},\qquad \JJ\subset \mathcal{I}. \label{Q}
\end{align}
In particular, if $\JJ=\mathcal{I}$, then $Q_{\mathbf{B},\mathbf{M}}[\JJ]$ has a simple matrix representation:
$\displaystyle 
Q_{\mathbf{B},\mathbf{M}}[\mathcal I]=\frac{\lan \mathbf{B}\mathbf{M}, \mathbf{M}\ran}{|\mathbf{M}|_1}$, where $\lan\cdot, \cdot\ran,\enskip |\cdot|_1$ denote the Euclidean inner product and the $\ell^1$-vector  norm.

We first studied the multi-species PKS system \eqref{EQ:KS_multiple_groups} subject to \emph{symmetric} arrays
\begin{equation}\label{b symmetric}
b_{\al\beta}=b_{\beta\al},\quad \forall\al,\beta\in\mathcal{I}.
\end{equation}
Same as in the single species case, there exists natural dissipated free energy for the system \eqref{EQ:KS_multiple_groups}
\begin{equation}\label{Free_energy}
E[\mathbf{n}]=\sum_{\al\in \mathcal{I}}\int n_\al\log n_\al dx+\sum_{\al,\beta\in \mathcal{I}}\frac{b_{\al\beta}}{4\pi}\iint n_\al(x)\log|x-y|n_\beta(y)dxdy,\quad \mathbf{n}:=(n_\al)_{\al\in\mathcal{I}}.
\end{equation}
The proof of the dissipation of \eqref{Free_energy} is postponed to the next section. We solve the equation \eqref{EQ:KS_multiple_groups} in the distribution sense with free energy dissipation constraint. 
\begin{defn}[Free energy solutions]
For any distributional solutions $\mathbf{n}$ to the equation \eqref{EQ:KS_multiple_groups} subject to initial data $\mathbf{n}_0$, they are the free energy solutions to \eqref{EQ:KS_multiple_groups} if the following free energy dissipation inequality holds on some maximal time interval $[0,T_\star)$
\begin{equation}\label{free_energy_dissipation}
E[\mathbf{n}(t)]+\sum_{\al\in \mathcal{I}}\int_0^t\int_{\rr^2}n_\al|\na\log n_\al-\na c_{\al}|^2dxds\leq E[ \mathbf{n}_0],\quad \forall t\in [0,T_\star).
\end{equation}
If the equality in \eqref{free_energy_dissipation} is satisfied, we call it free energy dissipation equality.
\end{defn}

The existence and blow-up theorems of \eqref{EQ:KS_multiple_groups} are stated as follows.
\begin{thm}[Global existence: subcritical mass]\label{Theorem_of_subcritical_existence}
Consider the equation \eqref{EQ:KS_multiple_groups} subject to initial conditions \eqref{General_condition_for_initial_data}. If the symmetric chemical generation matrix $\mathbf{B}$ ($\mathbf{B}_+\neq \mathbf{0}$) and the mass vector $\mathbf{M}$ satisfy the following subcritical mass constraint 
\begin{subequations}\label{eqs:subcritical_mass_condition}
\begin{eqnarray}
Q_{\mathbf{B}_+,\mathbf{M}}[\mathcal{I}] &<&8\pi, \label{subcritical_mass_condition_a}\\
Q_{\mathbf{B}_+,\mathbf{M}}[\JJ]& <&Q_{\mathbf{B}_+,\mathbf{M}}[\mathcal{I}]
\ \text{ for all } \ \emptyset\neq \JJ\subsetneqq \mathcal{I}.\label{subcritical_mass_condition_b}
\end{eqnarray}
\end{subequations}
Then the free energy solutions to \eqref{EQ:KS_multiple_groups} exist for all finite time.
\end{thm}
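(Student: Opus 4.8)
The plan is to combine the free-energy dissipation \eqref{free_energy_dissipation} with a \emph{system} logarithmic Hardy--Littlewood--Sobolev (log-HLS) inequality to obtain a uniform-in-time bound on the entropy $\sum_{\al}\int n_\al\log n_\al$, and then to bootstrap that bound to full regularity so that the maximal existence time $T_\star$ cannot be finite. Throughout I would work with a regularized version of \eqref{EQ:KS_multiple_groups} --- mollifying the initial data and/or the Poisson kernel --- so that every manipulation is justified for smooth solutions, and then pass to the limit; this step is routine and I suppress it below. From \eqref{free_energy_dissipation} the free energy is nonincreasing, $E[\mathbf{n}(t)]\le E[\mathbf{n}_0]$; splitting $b_{\al\beta}=(b_{\al\beta})_+-(b_{\al\beta})_-$ and writing $W_{\al\beta}:=\iint n_\al(x)\log|x-y|n_\beta(y)\,dx\,dy$, this reads
\begin{equation*}
\sum_{\al}\int n_\al\log n_\al+\frac{1}{4\pi}\sum_{\al,\beta}(b_{\al\beta})_+W_{\al\beta}\le E[\mathbf{n}_0]+\frac{1}{4\pi}\sum_{\al,\beta}(b_{\al\beta})_-W_{\al\beta}.
\end{equation*}

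The repulsive contribution on the right is harmless: since $\log|x-y|\le\tfrac12(|x|^2+|y|^2)+C$, the terms $W_{\al\beta}$ with $(b_{\al\beta})_->0$ are bounded \emph{above} by the second moments $\sum_\al\int|x|^2n_\al\,dx$, and a virial computation for \eqref{EQ:KS_multiple_groups} shows these moments grow at most linearly in $t$, hence stay finite on every finite interval $[0,T]$. The heart of the argument is therefore the lower bound on the left-hand side, which is where the subcritical hypotheses \eqref{eqs:subcritical_mass_condition} enter. Here I would invoke the Shafrir--Wolansky log-HLS inequality for systems applied to the \emph{attractive} array $\mathbf{B}_+$: conditions \eqref{subcritical_mass_condition_a}--\eqref{subcritical_mass_condition_b} are precisely the hypotheses under which the attractive free energy is not merely bounded below but \emph{coercive}, i.e. there exist $\theta\in(0,1)$ and $C=C(\mathbf{B}_+,\mathbf{M})$ with
\begin{equation*}
-\frac{1}{4\pi}\sum_{\al,\beta}(b_{\al\beta})_+W_{\al\beta}\le(1-\theta)\sum_{\al}\int n_\al\log n_\al+C.
\end{equation*}
The global bound \eqref{subcritical_mass_condition_a} is the analogue of the single-species threshold $M<8\pi$ and furnishes boundedness below, while the strict subset monotonicity \eqref{subcritical_mass_condition_b} is what rules out a \emph{partial} concentration in which a proper subcollection $\JJ\subsetneqq\mathcal{I}$ aggregates together even though the full system is subcritical; the strictness of both inequalities is exactly what produces the positive gap $\theta$. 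Feeding this into the energy inequality absorbs the entropy term and yields, for $t\in[0,T]$, a bound of the form $\theta\sum_{\al}\int n_\al\log n_\al\le E[\mathbf{n}_0]+C+C'(1+T)$.

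With the entropy controlled on $[0,T]$ together with the finite second moments, the negative part of $n_\al\log n_\al$ is absorbed by the moments, so a uniform bound on $\int n_\al(\log n_\al)_+$ follows; by de la Vall\'ee--Poussin the densities $n_\al(\cdot,t)$ are then uniformly integrable and no mass can concentrate into a Dirac measure on any finite interval. To upgrade this to genuine regularity I would use the dissipation term $\sum_\al\int_0^t\int n_\al|\na\log n_\al-\na c_\al|^2$ from \eqref{free_energy_dissipation}, which controls $\na\sqrt{n_\al}$ in $L^2_{t,x}$, and run a standard $L^p$-energy (Moser/De Giorgi) iteration propagating $\|n_\al(t)\|_{L^p}$ for every $p<\infty$ and then $\|n_\al(t)\|_{L^\infty}$ on $[0,T]$. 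Since $T$ is arbitrary, the continuation criterion is never triggered and the free energy solution exists for all finite time.

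I expect the main obstacle to be the precise matching of \eqref{eqs:subcritical_mass_condition} with the form of the Shafrir--Wolansky inequality and the extraction of the gap $\theta>0$. In particular, the subset condition \eqref{subcritical_mass_condition_b} encodes the possibility of partial blow-up and must be used in an essential way: verifying that it, together with \eqref{subcritical_mass_condition_a}, delivers \emph{coercivity} rather than mere boundedness for the $\mathbf{B}_+$-interaction energy is the delicate point, as is carefully tracking that only the attractive part $\mathbf{B}_+$ governs concentration while the repulsive part is safely absorbed into controlled moment growth.
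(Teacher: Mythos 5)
Your proposal is correct and follows essentially the same route as the paper: split the dissipated free energy \eqref{free_energy_dissipation} into attractive and repulsive parts, absorb the repulsive part via the (at most linearly growing) second moments, and apply the Shafrir--Wolansky system log-HLS inequality to the rescaled array $a_{\al\beta}=(b_{\al\beta})_+/\theta$ with $\theta=Q_{\mathbf{B}_+,\mathbf{M}}[\mathcal{I}]/(8\pi)$, so that \eqref{subcritical_mass_condition_a} gives $\theta<1$ and $\Lambda_{\mathcal{I}}(\mathbf{M})=0$ while \eqref{subcritical_mass_condition_b} gives $\Lambda_{\JJ}(\mathbf{M})>0$, yielding the entropy bound with gap exactly as in Proposition \ref{Prop_1}. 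Your concluding regularization/bootstrap step is what the paper packages as Propositions \ref{pro 2.1 multiple group} and \ref{pro 1.2 multiple group} (local existence under uniform entropy bounds plus an entropy-based continuation criterion), so the two arguments coincide in substance.
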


The multi-species mass condition \eqref{eqs:subcritical_mass_condition} recovers the   threshold for global regularity of a single species (after re-scaling), $\chi M <8\pi$, which is known to be sharp \cite{JagerLuckhaus92,Nagai95,BlanchetEJDE06, CarrilloRosado10, EganaMischler16}. It also provides a sharp characterization for global regularity of two-species dynamics.\newline
 Here are \myr{three} prototypical examples.

\medskip\noindent
\begin{exm}[{\bf Competition \myr{of two species}}]
We consider  the 2-species dynamics \eqref{eq:twospecies} with general sensitivity coefficients $\chi_1,\chi_2>0$,
\[
\begin{split}
\partial_t n_1 +\chi_1\nabla\cdot&(n_1\nabla c_1) = \Delta n_1,\\
\partial_t n_2 +\chi_2\nabla\cdot&(n_2\nabla c_2) = \Delta n_2,\\
&\left\{\begin{array}{c}
-\Delta c_1 =  n_2,\\
-\Delta c_2 = n_1.
\end{array}\right.
\end{split}
\]
Theorem \ref{Theorem_of_subcritical_existence} applies to the re-scaled variables $n'_\al=n_\al/\chi_\al$ with re-scaled masses $M'_\al=M_\al/\chi_\al$ and the corresponding  re-scaled chemical generation array 
$\displaystyle 
{\mathbf B}=\left[\begin{array}{cc} 0 & \chi_1\chi_2\\ \chi_1\chi_2 & 0\end{array}\right]$. The sub-critical condition \eqref{subcritical_mass_condition_a} now reads
$({(\chi_2M_1)}^{-1}+{(\chi_1M_2)}^{-1})^{-1}<4\pi$, while   \eqref{subcritical_mass_condition_b} is void since $Q_{\mathbf{B},\mathbf{M}'}[\JJ]=0$ for $\JJ=\{1\}, \{2\}$. In particular,  if the mass of one species --- \emph{either} $\chi_2M_1$ \emph{or} $\chi_1M_2$ is strictly less than $4\pi$, then  \eqref{eqs:subcritical_mass_condition} holds: global regularity follows \underline {independently} of the mass of the other species.
\end{exm}

\myr{
\medskip\noindent
\begin{exm}[{\bf Competition \myr{of three- and many-species}}]
We consider  the 3-species dynamics \eqref{eq:twospecies} with positive sensitivity coefficients $\chi_1=\chi_3:=\chi$ and $\chi_2$,
\[
\begin{split}
\partial_t n_\al +\chi_\al\nabla\cdot(n_\al\nabla c_\al) &= \Delta n_\al, \qquad \al\in\{1,2,3\}\\
-\Delta \left[\begin{array}{c}c_1\\c_2\\c_3\end{array}\right]&= 
\left[\begin{array}{ccc} 0 & 1 & 0 \\
													 1 & 0 & 1\\
													 0 & 1 & 0 \end{array}\right] 
\left[\begin{array}{c}n_1 \\ n_2 \\ n_3\end{array}\right].
\end{split}
\]
Theorem \ref{Theorem_of_subcritical_existence} applies to the re-scaled variables $n'_\al=n_\al/\chi_\al$ with re-scaled masses $M'_\al=M_\al/\chi_\al$ and the corresponding  re-scaled chemical generation array 
$\displaystyle 
{\mathbf B}=\left[\begin{array}{ccc} 0 & \chi_1\chi_2 & 0 \\ \chi_1\chi_2 & 0 & \chi_2\chi_3\\
0 & \chi_2\chi_3 & 0 \end{array}\right]$. The sub-critical condition \eqref{subcritical_mass_condition_b} with ${\mathcal J}=\{1,2\}\subset \{1,2,3\}$  requires
\[
2\frac{M_1M_2}{M_1/\chi_1+M_2/\chi_2} <  2\frac{M_1M_2+M_2M_3}{M_1/\chi_1+M_2/\chi_2+M_3/\chi_3},
\]
which is satisfied for all $M_\al$'s (recalling that  $\chi_3=\chi_1$). Similarly, the sub-critical condition \eqref{subcritical_mass_condition_b} with ${\mathcal J}=\{2,3\}\subset \{1,2,3\}$  requires
\[
2\frac{M_2M_3}{M_2/\chi_2+M_3/\chi_3} <  2\frac{M_1M_2+M_2M_3}{M_1/\chi_1+M_2/\chi_2+M_3/\chi_3},
\]
holds  for all $M_\al$'s; finally, \eqref{subcritical_mass_condition_b} with ${\mathcal J}=\{1,3\}$ is void, and hence it remains to verify that \eqref{subcritical_mass_condition_a} holds
\[
2\frac{M_1M_2+M_2M_3}{M_1/\chi_1+M_2/\chi_2+M_3/\chi_3} < 8\pi;
\]
This inequality is satisfied if  
\[
\frac{1}{{1}/{\chi_2M_1}+{1}/{\chi_1M_2}}
+ \frac{1}{{1}/{\chi_3M_2}+{1}/{\chi_2M_3}}<4\pi
\]
For example,  if $\chi M_2 <2\pi$, then  \eqref{eqs:subcritical_mass_condition} holds: global regularity follows \underline {independently} of the mass of the other species, $M_1$ and $M_3$.\newline
This can be extended to a general  many species array
$
\left[\begin{array}{ccccc} 0 & 1 & 0 & \ldots & \ldots  \\
													   1 & 0 & 1 & 0 & \ldots \\
													   0 & 1 & \ddots & \ddots & \ddots \\
													   0 & \ddots & \ddots & 0 & 1 \\
													   0 & \ldots & \ddots & 1 & 0 \end{array}\right]
$.
\end{exm}
}

\medskip\noindent
\begin{exm}[{\bf Cooperation \myr{of two species}}]
 Consider the 2-species dynamics \cite{EspejoVilchesConca13,chertock2018high}
\[
\begin{split}
\partial_t n_1 +\chi_1\nabla\cdot(n_1\nabla c) &= \Delta n_1,\\
\partial_t n_2 +\chi_2\nabla\cdot(n_2\nabla c) &= \Delta n_2,\\
\Delta c + n_1 + n_2 - c &= 0.
\end{split}
\]
Theorem \ref{Theorem_of_subcritical_existence} applies to the re-scaled variables $n'_\al=n_\al/\chi_\al$ with re-scaled masses $M'_\al=M_\al/\chi_\al$ and the corresponding  re-scaled concentrations $c'_1:=\chi_1 c$ and $c'_2:=\chi_2 c$, coupled through the chemical generation array 
$\displaystyle 
{\mathbf B}=\left[\begin{array}{cc} \chi^2_1 & \chi_1\chi_2\\ \chi_1\chi_2 & \chi^2_2\end{array}\right]$. The sub-critical condition\eqref{eqs:subcritical_mass_condition} now reads
\[
 \max\{\chi^2_1M'_1, \chi^2_2M'_2\} < \frac{(\chi_1 M'_1+\chi_2M'_2)^2}{M'_1+M'_2}<{8}{\pi},
 \]
 or --- after scaling back,
 \begin{equation}\label{eq:exm0}
  \max\{\chi_1M_1, \chi_2M_2\} < \frac{( M_1+M_2)^2}{{M_1}/{\chi_1}+{M_2}/{\chi_2}}<{8}{\pi}.
 \end{equation}
 The inequality on the right  of \eqref{eq:exm0} coincides with the first part of characterization for global existence in \cite[Theorem 1]{EspejoVilchesConca13}. 
The  inequality on the left of \eqref{eq:exm0} holds whenever $\frac{1}{2} < {\chi_1}/{\chi_2} < 2$ (independent of the $M_i$'s). Observe that \eqref{eq:exm0} implies --- and is therefore more restrictive than the second part of the general characterization for global existence in \cite[Theorem 1]{EspejoVilchesConca13} which requires $\max\{\chi_1M_1, \chi_2M_2\}< 8\pi$. 
\end{exm}

While  the last two examples show that the sub-critical mass condition \eqref{subcritical_mass_condition_b} may or may not be sharp for general $|\mathcal{I}|\geq2$ species,  the necessity of  the upper-bound in \eqref{subcritical_mass_condition_a} is stated  in the following.

\begin{thm}[Blow-up: supercritical mass]\label{Theorem_of_supercritical_blow_up}
Consider the equations \eqref{EQ:KS_multiple_groups} subject to smooth initial data $n_\al\in H^s$, $\forall\al \in\mathcal{I}$, $s\geq 2$ with finite second moment, and governed by a symmetric  chemical generation matrix \eqref{b symmetric}. If $Q_{\mathbf{B},\mathbf{M}}[\mathcal{I}]>8\pi$, then the solution blows up at a finite time.
\end{thm}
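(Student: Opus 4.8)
The plan is to use the classical virial (second-moment) argument, adapted to the coupled system. Define the total weighted second moment
\begin{equation*}
W(t):=\sum_{\al\in\mathcal{I}}\int_{\rr^2}|x|^2 n_\al(x,t)\,dx,
\end{equation*}
which is finite and differentiable in $t$ so long as the solution remains classical with finite second moment. The strategy is to show that $W$ decreases at a fixed positive rate whenever $Q_{\mathbf{B},\mathbf{M}}[\mathcal{I}]>8\pi$; since $W\ge 0$ by construction, this forces the solution to leave the class of smooth finite-second-moment solutions in finite time.

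First I would differentiate $W$ using $\pa_t n_\al=\de n_\al-\na\cdot(n_\al\na c_\al)$. Integrating by parts (the boundary terms vanishing by the decay of smooth solutions), the diffusion contributes $\int|x|^2\de n_\al=4M_\al$ since $\de|x|^2=4$, while the drift contributes $2\int x\cdot\na c_\al\,n_\al\,dx$. Summing over $\al$ gives
\begin{equation*}
\frac{d}{dt}W(t)=4|\mathbf{M}|_1+2\sum_{\al\in\mathcal{I}}\int_{\rr^2} x\cdot\na c_\al\, n_\al\,dx.
\end{equation*}
Next I would insert the Newtonian representation $\na c_\al(x)=-\frac{1}{2\pi}\sum_\beta b_{\al\beta}\int\frac{x-y}{|x-y|^2}n_\beta(y)\,dy$ and symmetrize the resulting double integral under the exchange $(x,\al)\leftrightarrow(y,\beta)$. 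Using the symmetry $b_{\al\beta}=b_{\beta\al}$ together with the identity $x\cdot(x-y)+y\cdot(y-x)=|x-y|^2$, the drift term collapses to the clean expression $-\frac{1}{4\pi}\sum_{\al,\beta}b_{\al\beta}M_\al M_\beta=-\frac{1}{4\pi}\lan\mathbf{B}\mathbf{M},\mathbf{M}\ran$. Recalling $\lan\mathbf{B}\mathbf{M},\mathbf{M}\ran=Q_{\mathbf{B},\mathbf{M}}[\mathcal{I}]\,|\mathbf{M}|_1$, this produces the exact virial identity
\begin{equation*}
\frac{d}{dt}W(t)=|\mathbf{M}|_1\Big(4-\frac{1}{2\pi}Q_{\mathbf{B},\mathbf{M}}[\mathcal{I}]\Big),
\end{equation*}
which reduces to the familiar single-species rate $M\big(4-M/2\pi\big)$ when $|\mathcal{I}|=1$ and $b_{11}=1$, consistent with the $8\pi$ threshold.

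To conclude, when $Q_{\mathbf{B},\mathbf{M}}[\mathcal{I}]>8\pi$ the right-hand side is a strictly negative constant $-\delta$, so $W(t)=W(0)-\delta t$. Since $W$ is nonnegative, this is impossible for $t>W(0)/\delta$, bounding the maximal smooth existence time by $T_\star\le W(0)/\delta<\infty$ and proving finite-time blow-up. I expect the main obstacle to be not the algebra but its rigorous justification: one must ensure the second moment remains finite and differentiable, that the integrations by parts carry no boundary contributions, and that the singular kernel integral is well defined. These all follow from the $H^s$-regularity ($s\ge 2$) and finite-second-moment hypotheses on a maximal smooth existence interval, so that the identity is valid up to $T_\star$; the symmetrization step exploiting $\mathbf{B}=\mathbf{B}^{\top}$ is the essential structural ingredient that replaces the single-species computation.
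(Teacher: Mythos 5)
Your proposal follows exactly the paper's proof: the same virial computation for the total second moment (the paper's identity \eqref{time_evol_V}, derived by the same kernel insertion and $(x,\al)\leftrightarrow(y,\beta)$ symmetrization) followed by the same contradiction with nonnegativity of the second moment. One cosmetic slip: the symmetrized drift term equals $-\frac{1}{2\pi}\langle \mathbf{B}\mathbf{M},\mathbf{M}\rangle$, not $-\frac{1}{4\pi}\langle \mathbf{B}\mathbf{M},\mathbf{M}\rangle$; your final identity $\frac{d}{dt}W=|\mathbf{M}|_1\big(4-\frac{1}{2\pi}Q_{\mathbf{B},\mathbf{M}}[\mathcal{I}]\big)$ is nevertheless the correct one (it matches the paper's), so the $8\pi$ threshold and the blow-up conclusion stand.
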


\begin{rmk} Theorem \ref{Theorem_of_supercritical_blow_up} tells us that  the bound $Q_{\mathbf{B},\mathbf{M}}[\mathcal{I}] \leq 8\pi$ is necessary for existence of global-in-time free energy solution.  
A sufficient condition  for this (strict) bound   to hold is given by, consult Proposition \ref{prop:sufficient} below, 
\begin{equation}\label{eq:sufficient}
\rho(\mathbf{B}_+) \max_\al M_\al < 8\pi, \qquad \rho(X)_{|X\in \text{Symm}_{{\mathcal I}\times {\mathcal I}}}:=\max_\al \lambda_\al(X).
\end{equation} 
Thus,  \eqref{eq:sufficient} implies that the first inequality  \eqref{subcritical_mass_condition_a} is satisfied. 
As an example, we revisit the two-species example \eqref{eq:twospecies} (with $\chi_1=\chi_2=1$). In this case, $Q_{\mathbf{B},\mathbf{M}}[\mathcal{J}]=0$ for $\JJ\subsetneqq \mathcal{I}$, so the second inequalities in  \eqref{subcritical_mass_condition_b} 
are void: it is only the first part, \eqref{subcritical_mass_condition_a}, that needs to be verified. Here $\rho(\mathbf{B}_+)=1$ and the sufficient condition   \eqref{eq:sufficient} amounts to $\displaystyle \max_{\al\in\{1,2\}}M_\al<8\pi$, which suffices (yet  stronger than the sharp $\displaystyle (M_1^{-1}+M_2^{-1})^{-1}<4\pi$ encountered before) for \eqref{subcritical_mass_condition_a} and hence the global existence of \eqref{eq:twospecies}.
\end{rmk}

To formulate the smoothness and uniqueness theorems, we need further physical restriction on the free energy solutions. First, the physical solutions to equation \eqref{EQ:KS_multiple_groups} should satisfy the conservation of mass:\begin{subequations}
\begin{align}
|n_\al(t)|_1\equiv&|n_\al(0)|_1=M_\al,\quad \forall \al \in \mathcal{I}, \quad \forall t\in [0,T_\star).\label{Mass_conservation}
\end{align}
Moreover, by formal computation, which is postponed to the next section, we have that the total second moment of the physically relevant solutions should grow linearly
\begin{align}
V[\mathbf{n}]:= \sum_{\al\in \mathcal{I}}V_\al(t) &=\sum_{\al\in \mathcal{I}}\int n_\al(x,t)|x|^2dx \nonumber \\
 & =\bigg(\sum_\al 4 M_\al\bigg)\bigg(1-\frac{Q_{\mathbf{B},\mathbf{M}}[\mathcal{I}]}{8\pi}\bigg)t+\sum_{\al\in\mathcal{I}}V_\al(0), \quad \forall t\in[0, T_\star)\label{Total_second_moment_evolution}.
\end{align}
Finally, since it is well-known that the boundedness of the entropy $\displaystyle S[n_\al]:=\int n_\al\log n_\al$ is closely related to existence of smooth solutions, we consider free energy solutions subject to bounded entropy and free energy dissipation,
\begin{align}
\mathcal{A}_t[\mathbf{n}]:=& \sup_{s\in[0,t]}\bigg\{\sum_{\al\in\mathcal{I}}\int n_\al(x,s)\log^+n_\al(x,s) dx\bigg\}\nonumber \\
& +\sum_{\al\in\mathcal{I}}\int_0^t\int n_\al(x,s)|\na \log n_\al(x,s)-\na c_\al (x,s)|^2dxds<\infty, \quad 
 \forall t<T_\star,
\label{A_t_Bound}\end{align}\end{subequations}
where $T_\star$ denotes the maximal existing time and $\log^+$ denotes the positive part of the function $\log$. Similar quantity is defined in the paper \cite{EganaMischler16}. We say that a free energy solution is \emph{physically relevant} if it satisfies physical constraints \eqref{Mass_conservation}, \eqref{Total_second_moment_evolution} and \eqref{A_t_Bound}. Now we state the theorems concerning the smoothness, uniqueness and long-time behavior of the physically relevant free energy solutions.

\begin{thm}[Smoothnness of the free energy solutions]\label{Theorem_of_smoothness_of_solutions}
Consider the equations \eqref{EQ:KS_multiple_groups} subject to initial condition \eqref{General_condition_for_initial_data} and symmetric chemical generation matrices $\mathbf{B}$.
The physically relevant free energy solutions $(n_\al)_{\al\in \mathcal{I}}$ are smooth, i.e., $n_\al\in C^{\infty}((0, T_\star)\times\rr^2),\enskip\forall \al\in\mathcal{I}$, where $T_\star$ is the maximal existence time. Moreover, the equality holds in \eqref{free_energy_dissipation}.
\end{thm}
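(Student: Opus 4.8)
The plan is to read Theorem \ref{Theorem_of_smoothness_of_solutions} as a \emph{conditional} regularity statement: granting the a priori bound $\mathcal{A}_t[\mathbf{n}]<\infty$ that is built into the notion of a physically relevant solution, I will bootstrap from the $L\log L$ control all the way to $C^\infty$ smoothness, \emph{independently} of any (sub)criticality of the masses. The driving mechanism is a coupled family of $L^p$-energy estimates in which the chemotactic nonlinearity is absorbed into the diffusion by means of the two-dimensional Gagliardo--Nirenberg--Sobolev inequality; the smallness needed to carry out this absorption will come not from a mass threshold but from the uniform entropy bound, which prevents concentration of mass at small spatial scales. Throughout, mass conservation \eqref{Mass_conservation} supplies the $L^1$ control and the moment growth \eqref{Total_second_moment_evolution} the control at spatial infinity that I will need later.

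First I would extract from \eqref{A_t_Bound}, on every $[0,t]$ with $t<T_\star$, two ingredients: the uniform bound $\sup_{s\le t}\sum_\al\int n_\al\log^+ n_\al\,dx<\infty$, and the finite space-time dissipation $\sum_\al\int_0^t\int n_\al|\na\log n_\al-\na c_\al|^2$. Writing $n_\al|\na\log n_\al|^2=4|\na\sqrt{n_\al}|^2$ and inserting the coupling $-\de c_\al=\sum_\beta b_{\al\beta}n_\beta$, the dissipation yields an $L^2_{t,x}$ bound on $\na\sqrt{n_\al}$ modulo interaction cross-terms, the latter being controlled by the entropy and moment bounds. This places each $n_\al$ in a parabolic energy space with no small-scale concentration.

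Next comes the technical heart. Testing the $\al$-th equation of \eqref{EQ:KS_multiple_groups} against $p\,n_\al^{p-1}$ and integrating by parts gives, for each $p>1$,
\begin{equation}\label{eq:Lp-energy}
\frac{d}{dt}\int n_\al^p\,dx+\frac{4(p-1)}{p}\int|\na n_\al^{p/2}|^2\,dx=(p-1)\sum_{\beta\in\mathcal{I}}b_{\al\beta}\int n_\al^p\, n_\beta\,dx.
\end{equation}
The coupling term $\int n_\al^p n_\beta$ is estimated by H\"older together with the two-dimensional Gagliardo--Nirenberg--Sobolev inequality applied to $n_\al^{p/2}$; the multiplicative factor it produces involves the local $L^1$-mass of $n_\al$ and $n_\beta$, which the uniform $L\log L$ bound forces below any prescribed $\ep$ on balls of small radius. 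This lets me absorb the nonlinearity into the diffusion term of \eqref{eq:Lp-energy}. Since $\int n_\al^p n_\beta$ ties distinct species together, I would run the estimate for the whole family $\{n_\al\}_{\al\in\mathcal{I}}$ simultaneously and iterate in $p$ (Moser--Alikakos iteration), obtaining $n_\al\in L^\infty_{\mathrm{loc}}((0,T_\star);L^p(\rr^2))$ for every $p<\infty$. \textbf{This coupled absorption step --- closing the $L^p$ loop for all species at once using only the entropy bound rather than a mass threshold --- is the step I expect to be the main obstacle.}

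With $L^p$ control in hand for all $p$, elliptic (Calder\'on--Zygmund) regularity for $-\de c_\al=\sum_\beta b_{\al\beta}n_\beta$ puts $\na c_\al$ in every $L^q$ and, after one more step, in $L^\infty_{\mathrm{loc}}$, so the drift $\na c_\al$ in \eqref{EQ:KS_multiple_groups} is bounded. Writing each equation as $\pa_t n_\al-\de n_\al=-\na\cdot(n_\al\na c_\al)$ with an increasingly regular right-hand side, parabolic $L^p$ maximal regularity followed by Schauder estimates bootstraps the smoothness, giving $n_\al\in C^\infty((0,T_\star)\times\rr^2)$ with instantaneous smoothing for $t>0$. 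Finally, the free energy dissipation \emph{equality} follows by justifying the formal differentiation of \eqref{Free_energy}: smoothness together with the $L^p$ and moment bounds makes every integration by parts legitimate with no boundary contribution at infinity (the logarithmic kernel being tamed by the second-moment control), and the symmetry \eqref{b symmetric} lets the time derivative of the logarithmic interaction term recombine with the diffusion into precisely the integrand $\sum_\al n_\al|\na\log n_\al-\na c_\al|^2$ of \eqref{free_energy_dissipation}, upgrading the inequality to an equality.
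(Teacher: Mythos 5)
You have identified the right target (get $L^p$ for all $p$, then bootstrap, then prove the equality), but your proposal skips over the step that is the actual technical heart of the theorem. The object in Theorem \ref{Theorem_of_smoothness_of_solutions} is a physically relevant \emph{free energy} solution, i.e.\ a distributional solution with only $L^1$, $L\log L$, second-moment and dissipation control \eqref{A_t_Bound}. You are therefore not entitled to test the equation against $p\,n_\al^{p-1}$ and invoke the chain rule: that computation presupposes regularity of exactly the kind the theorem is supposed to establish. The paper resolves this by working with \emph{renormalized} quantities (Lemma \ref{Lem:Renormalized relation}): one tests with convex piecewise-$C^1$ functions $\Gamma$ obeying the growth constraints \eqref{renormalizing_function_constraint}, realized as truncations $\Gamma_K$, $\gamma_K$ of $u(\widetilde{\log}\,u)^2$ and $u^p/p$, and this justification has a genuine analytic cost: in the resulting inequality the dissipation on the set $\{n_\al>K\}$ carries only the degenerate coefficient $K^{p-1}/\log K$, so the high-density part of the coupling term (the terms $T_{12}$, $T_2$ in the paper) \emph{cannot} be absorbed using the $L\log L$ bound alone. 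This is why the paper first proves the intermediate improvement $S_2[n_\al]=\int n_\al(\widetilde{\log}\,n_\al)^2\,dx<\infty$, which in turn requires the time-integrability of the Fisher information (Lemma \ref{Lem:I_time_integral}); the extra factor $1/(\log K)^2$ coming from $S_2$ is what beats the $1/\log K$ loss in the renormalized dissipation. The step you flagged as the main obstacle --- the coupled GNS absorption of $\int n_\al^p n_\beta$ across species --- is in fact the routine part (the paper's Lemma \ref{L_p_bound_n_al} already does it for the regularized system, via smallness of mass in superlevel sets $\{n_\al\ge K\}$, not on small balls); what is missing from your proposal is the justification of the $L^p$ estimate at the weak-solution level and the $S_2$ bootstrap which that justification forces.

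The second gap is in the dissipation equality. Your argument (smoothness plus decay at infinity legitimizes the integration by parts) only yields the free energy identity on intervals $[\delta,t]$ with $\delta>0$, where the solution is already known to be smooth. The equality claimed in \eqref{free_energy_dissipation} involves $E[\mathbf{n}_0]$, and the initial data is merely in the class \eqref{General_condition_for_initial_data}; the entropy and the logarithmic interaction term are not continuous along the convergence $\mathbf{n}(t_n)\to\mathbf{n}_0$ in $\mathcal{D}'$ as $t_n\to0^+$. The paper closes this by the lower semicontinuity of the free energy functional (Lemma \ref{E_Lower_semi_continuous}), which gives $E[\mathbf{n}_0]\le\liminf_{n}E[\mathbf{n}(t_n)]$; combined with the identity on $[t_n,t]$ this produces the reverse of the defining inequality of free energy solutions, hence equality. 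Without a lower-semicontinuity (or strong continuity at $t=0$) argument, your proof of the equality does not reach the initial time.
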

\begin{thm}[Uniqueness of the free energy solutions]\label{Theorem_of_uniqueness}
Consider the equation \eqref{EQ:KS_multiple_groups} subject to initial condition  \eqref{General_condition_for_initial_data} and symmetric chemical generation matrix $\mathbf{B}$
. There exists at most one physically relevant free energy solution.
\end{thm}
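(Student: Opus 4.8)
My plan is to establish a \emph{weak--strong stability estimate}. Since Theorem~\ref{Theorem_of_smoothness_of_solutions} guarantees that every physically relevant free energy solution is smooth on $(0,T_\star)\times\rr^2$, it suffices to show that two such smooth solutions $\mathbf{n}=(n_\al)$ and $\tilde{\mathbf{n}}=(\tilde n_\al)$ issued from the same data \eqref{General_condition_for_initial_data} must coincide. A direct $L^2$ estimate of the difference would force control of $\|n_\al\|_{L^\infty}$, which degenerates as $t\to0^+$; instead I would measure the difference in the homogeneous negative norm $\dot H^{-1}$, which sits exactly one derivative below the transport nonlinearity and is the natural energy for the elliptic coupling $-\de c_\al=\sum_\beta b_{\al\beta}n_\beta$. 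The symmetry of $\mathbf{B}$ will not be exploited for sign-definiteness; it enters only as the standing hypothesis, since the cross terms below are bounded term by term.

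Set $w_\al:=n_\al-\tilde n_\al$ and $\psi_\al:=(-\de)^{-1}w_\al$; each $w_\al$ has zero mass and finite second moment, so $\na\psi_\al\in L^2$ with $\|\na\psi_\al\|_{L^2}=\|w_\al\|_{\dot H^{-1}}$ and $\|\na^2\psi_\al\|_{L^2}=\|\de\psi_\al\|_{L^2}=\|w_\al\|_{L^2}$. Testing the difference of the equations against $\psi_\al$, summing over $\al\in\mathcal{I}$, and using $n_\al\na c_\al-\tilde n_\al\na\tilde c_\al=w_\al\na c_\al+\tilde n_\al\na(c_\al-\tilde c_\al)$ together with $\na(c_\al-\tilde c_\al)=\sum_\beta b_{\al\beta}\na\psi_\beta$, I obtain the identity
\begin{align*}
\frac12\frac{d}{dt}\sum_{\al}\|w_\al\|_{\dot H^{-1}}^2
 = -\sum_{\al}\|w_\al\|_{L^2}^2
 +\sum_{\al}\int \na\psi_\al\cdot w_\al\,\na c_\al\,dx
 +\sum_{\al,\beta}b_{\al\beta}\int \tilde n_\al\,\na\psi_\al\cdot\na\psi_\beta\,dx,
\end{align*}
in which the diffusion furnishes the good dissipation $-\sum_\al\|w_\al\|_{L^2}^2$.

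The crux is to absorb the two drift sums into $\tfrac12\sum_\al\|w_\al\|_{L^2}^2$ plus a time-integrable multiple of $\sum_\al\|w_\al\|_{\dot H^{-1}}^2$. For the coupling term I would use the two-dimensional Gagliardo--Nirenberg (Ladyzhenskaya) inequality $\|\na\psi_\beta\|_{L^4}^2\lesssim\|\na\psi_\beta\|_{L^2}\|\na^2\psi_\beta\|_{L^2}=\|w_\beta\|_{\dot H^{-1}}\|w_\beta\|_{L^2}$, so that by H\"older $\int \tilde n_\al|\na\psi_\beta|^2\le\|\tilde n_\al\|_{L^2}\|\na\psi_\beta\|_{L^4}^2\le\ep\|w_\beta\|_{L^2}^2+C_\ep\|\tilde n_\al\|_{L^2}^2\|w_\beta\|_{\dot H^{-1}}^2$; the background-drift term is closed by a parallel estimate, bounding $\|\na c_\al\|_{L^4}$ via Hardy--Littlewood--Sobolev by $\|n_\beta\|_{L^{4/3}}$. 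The remaining point is that the coefficients $\|n_\al(s)\|_{L^2}^2$ are integrable in time uniformly down to $t=0$: writing $\int_0^t\|n_\al\|_{L^2}^2\,ds\lesssim M_\al\int_0^t\|\na\sqrt{n_\al}\|_{L^2}^2\,ds$ by Gagliardo--Nirenberg and controlling $\int n_\al|\na\log n_\al|^2$ through the dissipation and drift contributions recorded in the finite quantity $\mc{A}_t$ of \eqref{A_t_Bound}, I get $\int_0^t\|n_\al\|_{L^2}^2\,ds<\infty$. Absorbing $\ep\|w_\al\|_{L^2}^2$ into the diffusion then leaves $\frac{d}{dt}\sum_\al\|w_\al\|_{\dot H^{-1}}^2\le C(s)\sum_\al\|w_\al\|_{\dot H^{-1}}^2$ with $\int_0^tC(s)\,ds<\infty$, and Gronwall together with $w_\al(0)=0$ forces $w_\al\equiv0$.

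I expect the main obstacles to be two genuinely borderline two-dimensional points. First, the very definition of $\|w_\al\|_{\dot H^{-1}}$ is delicate: the inverse Laplacian in $\rr^2$ is the logarithmic kernel, so $\na\psi_\al=\tfrac1{2\pi}\tfrac{x}{|x|^2}*w_\al$ lies in $L^2$ only because $w_\al$ has \emph{zero mass} and finite second moment. I will need to verify that $\|w_\al\|_{\dot H^{-1}}^2=-\tfrac1{2\pi}\iint\log|x-y|\,w_\al(x)w_\al(y)\,dx\,dy$ is finite and that no boundary contributions at infinity arise in the integrations by parts --- precisely where the second-moment part of physical relevance is used. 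Second, the drift estimates are exactly critical, and the entire scheme hinges on keeping every constant dependent solely on the masses $M_\al$, the second moments, and $\mc{A}_t$ --- all controlled uniformly as $t\to0^+$ --- rather than on pointwise-in-time smooth norms that blow up at the initial time; this is the technical heart of the argument.
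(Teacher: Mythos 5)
Your interior-in-time energy scheme is coherent and takes a genuinely different route from the paper: the paper proves uniqueness through the mild (Duhamel) formulation and a Fujita--Kato-type contraction in the scale-invariant quantity $\de_\al(t)=\sup_{0<s\leq t}s^{1/4}|n_{\al,2}(s)-n_{\al,1}(s)|_{4/3}$, with the smallness that closes the argument supplied by the lemma $\lim_{t\to0^+}t^{1/4}\sum_\al|n_\al(t)|_{4/3}=0$ \eqref{t1/4n4/3_0}. However, your proposal has a genuine gap, and it sits exactly at $t=0$, not at the points you flagged. Gronwall on $[\tau,t]$ with $0<\tau<t$ gives $\sum_\al\|w_\al(t)\|_{\dot H^{-1}}^2\leq\big(\sum_\al\|w_\al(\tau)\|_{\dot H^{-1}}^2\big)\exp\big(\int_\tau^t C(s)\,ds\big)$, and since your coefficient $C$ is integrable down to $s=0$ the exponential factor stays bounded; so everything reduces to proving $\liminf_{\tau\to0^+}\sum_\al\|w_\al(\tau)\|_{\dot H^{-1}}^2=0$. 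This is not a formality. The common initial datum is attained only in the distributional / weak-$L^1$ sense, and weak convergence of $n_{\al,1}(\tau)$ and $n_{\al,2}(\tau)$ to the same limit yields only \emph{lower} semicontinuity of norms --- the wrong direction. Quantitatively, the uniform controls available near $t=0$ are too weak to force this: the data are merely $L\log L$, the only $L^2$ information is $t\,|n_\al(t)|_2^2\leq C_{L^2}$ \eqref{t_L22_bound}, so the high-frequency part of $\|w_\al(\tau)\|_{\dot H^{-1}}^2$ on $\{|\xi|>R\}$ is only $O(R^{-2}\tau^{-1})$, and interpolating against the $H^{-s}$ modulus of continuity coming from the equation produces a bound that degenerates rather than vanishes as $\tau\to0^+$. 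Repairing this requires a separate quantitative ingredient --- for instance a frequency-splitting argument that uses precisely the paper's vanishing lemma \eqref{t1/4n4/3_0} --- so writing ``Gronwall together with $w_\al(0)=0$'' hides the actual crux of the uniqueness problem for this class of data.

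A secondary, fixable point: you assert that $\int_0^t\|n_\al(s)\|_2^2\,ds<\infty$ follows from the finiteness of $\mathcal{A}_t$ in \eqref{A_t_Bound}. Writing $4|\na\sqrt{n_\al}|_2^2=\int n_\al|\na\log n_\al|^2\leq 2\int n_\al|\na\log n_\al-\na c_\al|^2+2\int n_\al|\na c_\al|^2$, the first term is indeed part of $\mathcal{A}_t$, but the drift term $\int_0^t\int n_\al|\na c_\al|^2$ is \emph{not}; its integrability needs an extra argument (e.g.\ the evolution of $\sum_\al\int n_\al c_\al$ as in \eqref{A_priori_est_of_n_c_al_ep_4}). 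The paper packages exactly this as Lemma \ref{Lem:I_time_integral} (time-integrability of the Fisher information), so you can simply invoke that lemma; as written, though, your justification is circular.
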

\begin{thm}[Long time behavior of the free energy solutions]\label{Theorem_of_long_time_decay}
Consider the solutions to \eqref{EQ:KS_multiple_groups} subject to initial condition $n_\al\in H^s,\enskip \forall \al \in \mathcal{I}, s\geq 2$ and symmetric chemical generation matrices \eqref{b symmetric}. There exists a constant $C$, which only depends on the initial data, such that the following estimate is satisfied,
\begin{equation}
\sum_{\al\in\mathcal{I}}|n_\al(t)|_2^2\leq\frac{C}{1+t},\quad \forall t\in[0,\infty).
\end{equation}
\end{thm}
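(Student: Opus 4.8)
The plan is to derive a closed differential inequality for the scalar quantity $y(t):=\sum_{\al\in\mathcal{I}}|n_\al(t)|_2^2$ and to show it satisfies a Riccati-type bound $y'\le -cy^2$, from which the $1/(1+t)$ decay follows by elementary ODE comparison. First I would multiply the $\al$-th density equation in \eqref{EQ:KS_multiple_groups} by $n_\al$, integrate over $\rr^2$, and sum over $\al$. The diffusion term gives $-2|\na n_\al|_2^2$, while for the drift term an integration by parts together with the elliptic relation $-\de c_\al=\sum_\beta b_{\al\beta}n_\beta$ yields $-2\int n_\al\na\cdot(n_\al\na c_\al)=-\int n_\al^2\de c_\al=\sum_\beta b_{\al\beta}\int n_\al^2 n_\beta$. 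The outcome is the identity
\[
\frac{d}{dt}\sum_{\al\in\mathcal{I}}|n_\al|_2^2=-2\sum_{\al\in\mathcal{I}}|\na n_\al|_2^2+\sum_{\al,\beta\in\mathcal{I}}b_{\al\beta}\int_{\rr^2}n_\al^2 n_\beta\,dx,
\]
where smoothness (Theorem \ref{Theorem_of_smoothness_of_solutions}) together with the decay at infinity guaranteed by the finite-second-moment hypothesis justifies all integrations by parts.

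The decisive step is to absorb the cubic production into the Dirichlet dissipation. Replacing $b_{\al\beta}$ by $(b_{\al\beta})_+$ and invoking the system Gagliardo--Nirenberg--Sobolev inequality --- the cubic counterpart of the logarithmic Hardy--Littlewood--Sobolev inequality of Shafrir \& Wolansky that underlies the subcritical threshold --- I would show that the subcritical condition \eqref{eqs:subcritical_mass_condition} forces the existence of $\delta\in(0,2]$, depending only on $\mathbf{B}$ and the conserved masses $\mathbf{M}$, such that
\[
\sum_{\al,\beta\in\mathcal{I}}(b_{\al\beta})_+\int_{\rr^2}n_\al^2 n_\beta\,dx\le(2-\delta)\sum_{\al\in\mathcal{I}}|\na n_\al|_2^2,\qquad \forall\,t\ge 0.
\]
Because the masses $M_\al=|n_{\al0}|_1$ are conserved, $\delta$ is uniform in time; strictness of $Q_{\mathbf{B}_+,\mathbf{M}}[\mathcal{I}]<8\pi$ is exactly what makes $\delta$ positive. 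Combined with the identity this gives the dissipative bound $\frac{d}{dt}\sum_{\al}|n_\al|_2^2\le-\delta\sum_{\al}|\na n_\al|_2^2$.

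To close the inequality I would apply the two-dimensional Nash inequality $|f|_2^2\le C_N|f|_1|\na f|_2$ to each $n_\al$; using mass conservation this gives $|\na n_\al|_2^2\ge|n_\al|_2^4/(C_N^2 M_\al^2)$, and after a power-mean (Cauchy--Schwarz) step,
\[
\sum_{\al\in\mathcal{I}}|\na n_\al|_2^2\ge\frac{1}{|\mathcal{I}|\,C_N^2(\max_\al M_\al)^2}\Big(\sum_{\al\in\mathcal{I}}|n_\al|_2^2\Big)^2.
\]
Writing $c:=\delta\big(|\mathcal{I}|\,C_N^2(\max_\al M_\al)^2\big)^{-1}$, the two displays combine to the Riccati inequality $y'\le-cy^2$. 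Dividing by $y^2$ and integrating yields $y(t)^{-1}\ge y(0)^{-1}+ct$, hence $y(t)\le(y(0)^{-1}+ct)^{-1}\le C/(1+t)$ with $C=\max\{y(0),1/c\}$, a constant depending only on the initial data through $\sum_\al|n_{\al0}|_2^2$, the masses, $|\mathcal{I}|$, and $\delta$. This is the asserted estimate.

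The main obstacle is the production-versus-dissipation inequality of the second step. A crude Hölder-plus-Young argument ($\int n_\al^2 n_\beta\le|n_\al|_3^2\,|n_\beta|_3$ followed by $|n_\al|_3^3\le C_{GN}M_\al|\na n_\al|_2^2$) furnishes such a bound only under a non-sharp smallness condition on $\sum_{\al,\beta}(b_{\al\beta})_+M_\al^{2/3}M_\beta^{1/3}$; recovering the sharp, scale-invariant threshold encoded in \eqref{eqs:subcritical_mass_condition}, and in particular producing a strictly positive $\delta$, requires the optimal system inequality and control of the mutual concentration of the species. Everything downstream --- the differential identity, the Nash step, and the ODE comparison --- is routine once this uniform-in-time domination with $\delta>0$ is secured.
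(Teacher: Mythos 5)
Your step 1 identity, the Nash step, and the ODE comparison are all correct; the proposal stands or falls with the absorption inequality in step 2, and that inequality is false. The claimed ``cubic counterpart of the log-HLS inequality'' --- that the subcritical condition \eqref{eqs:subcritical_mass_condition} alone forces $\sum_{\al,\beta}(b_{\al\beta})_+\int n_\al^2 n_\beta\,dx\le(2-\delta)\sum_\al|\na n_\al|_2^2$ --- already fails for a single species. Take $|\mathcal{I}|=1$, $b_{11}=1$, so that subcriticality reads $M<8\pi$, and test with the (mollified) cone $n=\lambda(1-|x|)_+$ with $\lambda=3M/\pi$: then $\int n\,dx=M$, $\int|\na n|^2dx=9M^2/\pi$, $\int n^3dx=27M^3/(10\pi^2)$, hence $\int n^3dx=\frac{3M}{10\pi}\int|\na n|^2dx$. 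For every $M\in\left(\frac{20\pi}{3},\,8\pi\right)$ this ratio exceeds $2$, so no $\delta>0$ exists even though the mass is subcritical. Equivalently: your step 2 would require the sharp constant $C_*$ in $\int n^3dx\le C_*|n|_1|\na n|_2^2$ to satisfy $C_*\le\frac{1}{4\pi}$, whereas the cone shows $C_*\ge\frac{3}{10\pi}>\frac{1}{4\pi}$. The point is structural: no mass-only cubic functional inequality can encode the $8\pi$ threshold; that threshold is tied to the logarithmic HLS inequality acting on the \emph{entropy} through the free energy. In the subcritical regime, the smallness needed to absorb $\int n_\al^2n_\beta\,dx$ into the dissipation must come from the uniform entropy bound (equi-integrability), via the level-set splitting $n_\al=\min\{n_\al,K\}+(n_\al-K)_+$ together with $\int_{n_\al\ge K}n_\al\,dx\le C_{L\log L}/\log K$, which is exactly how the paper handles this term in Lemma \ref{L_p_bound_n_al}.

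There is a second, independent obstruction: even after repairing step 2 by the entropy-based truncation, the truncation produces additive constants, so the best differential inequality you can obtain in the original variables is $y'\le-c\,y^2+C(K)$, whose solutions merely stay bounded (the comparison ODE has the positive equilibrium $\sqrt{C/c}$); the Riccati decay $y\le C/(1+t)$ is lost. This is precisely why the paper's proof does not work in the original variables. It passes to self-similar variables $X=x/R(t)$, $\tau=\log R(t)$, $R=\sqrt{1+2t}$, where the rescaled densities $N_\al$ acquire a confining drift $\na\cdot(XN_\al)$; it proves a uniform-in-$\tau$ second-moment bound and a uniform entropy bound by applying the Shafrir--Wolansky log-HLS inequality to the rescaled free energy $E_R$ (which contains the second moment), then runs the truncation-plus-Nash argument on $\sum_\al|(N_\al-K)_+|_2^2$ to get $\sup_\tau\sum_\al|N_\al(\tau)|_{L^2(dX)}<\infty$, and finally translates back through $|n_\al|_{L^2(dx)}^2=|N_\al|_{L^2(dX)}^2/(1+2t)$. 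In other words, the $1/(1+t)$ rate comes from the rescaling, not from a Riccati inequality. To fix your proposal you would need both corrections: replace the mass-based cubic inequality by the entropy-based absorption, and carry out the estimate in self-similar variables (or supply some other mechanism that converts uniform boundedness into decay).
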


If the chemical generation matrix $\mathbf{B}$ is non-symmetric, the free energy \eqref{Free_energy} defined above is no longer dissipated. As a result, we cannot use the machinery developed in \cite{BlanchetEJDE06} to prove a global well-posedness theorem. However, we can still prove the global existence and uniform in time boundedness results for the multi-species PKS systems \eqref{EQ:KS_multiple_groups} subject to a special class of chemical generation matrices which we call \emph{essentially dissipative matrices}. The definition is as follows:
\begin{defn}
Define the sequences of subsets $\mathcal{I}^{(0)}\subset \mathcal{I}^{(1)}\subset...\subset \mathcal{I}^{(|\mathcal{I}|)}$ of $\mathcal{I}$ as follows:
\begin{align*}
&\mathcal{I}^{(0)}:=\{\al\in\mathcal{I}|b_{\al\beta}\leq 0,\quad \forall \beta \in \mathcal{I}\};\\
&\mathcal{I}^{(k)}:=\{\al\in\mathcal{I}|b_{\al\beta}\leq 0,\quad \forall \beta \in \mathcal{I}\backslash \mathcal I^{(k-1)}\},\quad k\in\{1,2,...,|\mathcal{I}|\}.
\end{align*}
If $\mathcal{I}^{(|\mathcal{I}|)}=\mathcal{I}$, we called the matrix $\mathbf{B}$ essentially dissipative.
\end{defn}
\begin{rmk}
The simplest essentially dissipative matrices $\mathbf{B}$'s are
\begin{align*}
\left[\begin{array}{cc}
0&1\\-1&0
\end{array}\right],\quad\left[\begin{array}{ccc}
0&1&2\\-1&0&3\\-2&-4&0
\end{array}\right].\end{align*}
Essentiall dissipative matrices naturally arise when there are  chasing-escaping phenomena in the multi-species PKS system \eqref{EQ:KS_multiple_groups}. For example, the system \eqref{EQ:KS_multiple_groups} subject to chemical generation relation $b_{12}=-b_{21}=1$, $b_{11}=b_{22}=0$ describes the situation that bacteria of species $1$ are escaping from bacteria of species $2$, whereas bacteria of species $2$ are chasing bacteria of species $1$.
\end{rmk}
The theorem corresponding to the multi-species PKS model \eqref{EQ:KS_multiple_groups} subject to essentially dissipative $\mathbf{B}$ is as follows.
\begin{thm}[Non-symmetric  interactions]\label{Theorem_essentially_negative_B}
Consider the multi-species PKS system \eqref{EQ:KS_multiple_groups} subject to initial condition $(n_\al)_{0}\in H^s$, $\forall\al\in\mathcal{I}, s\geq 2$. Assume that the chemical generation matrix $\mathbf{B}$ is essentially dissipative. Then there exists a uniformly bounded $H^s$ solution to the equation \eqref{EQ:KS_multiple_groups} for all time, i.e., there exists a constant $C_{H^s}=C_{H^s}(\{n_{\al 0}\}_{\al\in\mathcal{I}})$ such that
\begin{align*}
\sum_{\al\in\mathcal{I}}|n_\al(t)|_{H^s}\leq C_{H^s}<\infty,
\quad\forall t\in[0,\infty).
\end{align*}
Furthermore, there exists a constant $C$, which depends only on the initial data and $\mathbf{B}$, such that the following estimate is satisfied,
\begin{equation}
\sum_{\al\in\mathcal{I}}|n_\al(t)|_2^2\leq \frac{C}{1+t}, \quad \forall t\geq0.
\end{equation}
\end{thm}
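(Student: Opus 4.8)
The strategy is to replace the free-energy/entropy machinery of the symmetric case --- unavailable here because \eqref{Free_energy} is no longer dissipated when $\mathbf{B}$ is non-symmetric --- by a direct induction on the filtration $\mathcal{I}^{(0)}\subset\mathcal{I}^{(1)}\subset\cdots\subset\mathcal{I}^{(|\mathcal{I}|)}=\mathcal{I}$ that defines essential dissipativity. I begin with the standard preliminaries, valid for $H^s$ data: local-in-time existence of a solution on a maximal interval $[0,T_\star)$ by fixed point together with parabolic smoothing, non-negativity $n_\al\ge0$ from the maximum principle, and conservation of mass $|n_\al(t)|_1\equiv M_\al$ from the divergence structure of the first equation. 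Since $\na c_\al=\sum_{\beta}b_{\al\beta}\na(-\de)^{-1}n_\beta$ gains one derivative over the densities, the continuation criterion reduces global existence and the claimed $H^s$ bound to a uniform-in-time control of $\sum_\al|n_\al|_2$.

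The engine of the argument is the $L^2$ identity, obtained by integrating the drift term by parts and substituting $-\de c_\al=\sum_\beta b_{\al\beta}n_\beta$,
\[
\frac{d}{dt}|n_\al|_2^2=-2|\na n_\al|_2^2+\sum_{\beta\in\mathcal{I}}b_{\al\beta}\int n_\al^2 n_\beta\,dx.
\]
For $\al\in\mathcal{I}^{(k)}\setminus\mathcal{I}^{(k-1)}$ the defining property of the filtration guarantees $b_{\al\beta}\le0$ for every $\beta\notin\mathcal{I}^{(k-1)}$, so the only destabilising contributions come from indices $\beta\in\mathcal{I}^{(k-1)}$ lying strictly lower in the hierarchy. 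Bounding $\int n_\al^2 n_\beta\le|n_\beta|_2|n_\al|_4^2$ and invoking the planar Gagliardo--Nirenberg inequality $|n_\al|_4^2\le C|\na n_\al|_2|n_\al|_2$, I absorb part of the dissipation via Young's inequality and reach the scalar differential inequality
\[
\frac{d}{dt}|n_\al|_2^2\le-|\na n_\al|_2^2+g_{k-1}(t)\,|n_\al|_2^2,\qquad g_{k-1}(t):=C\sum_{\beta\in\mathcal{I}^{(k-1)}}|n_\beta|_2^2 .
\]

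The crux is a robust ODE observation. Inserting Nash's inequality $|\na n_\al|_2^2\ge c\,M_\al^{-2}|n_\al|_2^4$ --- legitimate precisely because mass is conserved --- turns the previous line into $y'\le-ay^2+\tfrac{b}{1+t}y$ for $y:=|n_\al|_2^2$, once the inductive hypothesis $g_{k-1}(t)\le C/(1+t)$ is available (the base stratum $\mathcal{I}^{(0)}$ has $g_{-1}\equiv0$). Passing to $z:=1/y$ gives $z'+\tfrac{b}{1+t}z\ge a$, and integrating against the factor $(1+t)^b$ yields $z(t)\gtrsim(1+t)$, hence $y(t)\le C/(1+t)$ no matter how large $b$ is: the quadratic dissipation always defeats the merely logarithmically-integrable linear forcing. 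This closes the induction over $k$ and simultaneously produces the decay $\sum_\al|n_\al|_2^2\le C/(1+t)$. The very same hierarchical computation applied to $\frac{d}{dt}\int n_\al^p\,dx$ propagates uniform bounds to every $L^p$, $p<\infty$, and thence to $L^\infty$; with $|n_\al|_{L^1\cap L^\infty}$ and hence $|\na c_\al|_\infty$ controlled, the equation is a linear drift--diffusion equation with bounded coefficients, and parabolic bootstrapping --- whose forcing now decays because the densities do --- upgrades these to the uniform-in-time estimate $\sum_\al|n_\al|_{H^s}\le C_{H^s}$ and, via the continuation criterion, to $T_\star=\infty$.

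The main obstacle is the self-consistency of the inductive scheme rather than any single estimate: one must check that the positive couplings always point ``down'' the filtration toward strata already shown to decay, so that the destabilising term degenerates into a time-decaying forcing that the nonlinear dissipation can absorb. I expect the most delicate bookkeeping to occur at the higher $L^p$ and $H^s$ stages, where the Gagliardo--Nirenberg exponents and the absorption of the dissipation must be arranged so that no constant degenerates as one climbs the Sobolev scale and the bounds remain uniform in time.
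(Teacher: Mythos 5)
Your proposal is correct, and its backbone coincides with the paper's proof: induction up the filtration $\mathcal{I}^{(0)}\subset\mathcal{I}^{(1)}\subset\cdots\subset\mathcal{I}^{(|\mathcal{I}|)}=\mathcal{I}$, $L^p$ energy estimates combined with Nash's inequality, a Moser--Alikakos type iteration to reach $L^\infty$, and $H^s$ energy estimates closed by Gagliardo--Nirenberg interpolation. Where you genuinely depart from the paper is the decay estimate. The paper first proves the uniform $L^\infty$ and $H^s$ bounds in the original variables (feeding the $L^\infty$ bound of the lower strata into the $L^{2p}$ estimates of the next stratum), and then obtains $\sum_\al|n_\al(t)|_2^2\le C/(1+t)$ separately, by passing to the self-similar variables \eqref{EQ:KS_multiple_groups_self_similar_variable} and re-running the hierarchical iteration to get uniform-in-$\tau$ bounds on the rescaled densities $N_\al$. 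You instead extract the decay directly in the original variables from the scalar comparison $y'\le -ay^2+\tfrac{b}{1+t}\,y\ \Rightarrow\ y\le C/(1+t)$, the quadratic term coming from Nash plus mass conservation and the forcing from the already-decaying lower strata; this avoids the change of variables altogether, and it has the structural advantage that the $L^2$ induction consumes only $L^2$ information on the lower strata (via $\int n_\al^2 n_\beta\,dx\le C|\na n_\al|_2|n_\al|_2|n_\beta|_2$), not their $L^\infty$ norms. Two caveats, both standard and both handled in the paper's write-up, should be kept in mind when completing your sketch: passing from ``all $L^p$, $p<\infty$'' to $L^\infty$ requires tracking how the constants grow in $p$ (this is precisely the Moser--Alikakos recursion, where the $p$-th roots of the iterated constants must converge), and the uniform-in-time $H^s$ bound does not follow from ``bounded drift coefficients plus parabolic bootstrapping'' alone --- a naive Gr\"onwall argument there gives exponential growth; one must retain the dissipation $-\sum_\al|\na^{s+1}n_\al|_2^2$ and interpolate, as the paper does, so that the superlinear negative term dominates the linear one. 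You flag both issues, and they close along the paper's lines.
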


The paper is organized as follows: in section 2, we give preliminaries and the proof of Theorem \ref{Theorem_of_supercritical_blow_up}; in section 3, we prove the existence of global free energy solutions with subcritical mass; in section 4, we prove the smoothness of the free energy solutions; in section 5, we prove the uniqueness of the free energy solutions; in section 6, we explore the long-time behavior of the free energy solutions; in the last section, we discuss the non-symmetric case.
\subsection{Notations}
In the paper, we use the notation $A\lesssim B$ $(A,B\geq 0)$, if there exists a constant $C$ such that $A\leq CB$. We will also use $\sum_\al$ to represent $\sum_{\al\in\mathcal{I}}$ unless otherwise stated. Constant $C_S$, $C_{HLS}$, $C_{lHLS}$, $C_{GNS}$ and $C_N$ are used to represent universal constant depending on various differential(integral) inequalities. The exact values might change from line to line. Given a vector $\mathbf{w}$ we let $|\mathbf{w}|_p$
denote its $\ell^p$ norm; given a vector function $\mathbf{w}(\cdot)$ we let $|\mathbf{w}(\cdot)|_X$ denote its norm in vector space $X$. In particular, $|\mathbf{w}(\cdot)|_p$ denote the usual $L^p$  spaces, and  the distinction between $\ell^p$ and $L^p$ spaces is clear from the text.

\section{Preliminaries}
Two quantities are crucial in the analysis of the multi-species PKS dynamics \eqref{EQ:KS_multiple_groups} --- the free energy $E[\mathbf{n}]$ \eqref{Free_energy} and the second moment $\sum_\al V_\al$ \eqref{Total_second_moment_evolution}. In this section, we calculate the time evolution of these two quantities formally and give the proof of Theorem \ref{Theorem_of_supercritical_blow_up}.

Same as in the single species case, the free energy $E[\mathbf{n}]$ \eqref{Free_energy} is formally dissipated under the equation \eqref{EQ:KS_multiple_groups}.
\begin{lem}
Consider smooth solutions $\mathbf{n}$ to the equation \eqref{EQ:KS_multiple_groups} subject to initial data $\mathbf{n}_0$ and symmetric $\mathbf{B}$, the free energy $E[\mathbf{n}]$ \eqref{Free_energy} is deceasing and it satisfies the following free energy dissipation equality 
\begin{equation}\label{Free_energy_dissipation}
E[\mathbf{n}(t)]=E[\mathbf{n}_0]-\sum_{\al\in\mathcal{I}}\int_0^t\int n_\al|\na \log n_\al -\na{c_\al}|^2dxds=:E[\mathbf{n}_0]-\int_0^t \mathcal{D}[\mathbf{n}(s)]ds.
\end{equation}
\end{lem}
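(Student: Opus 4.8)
The plan is to convert the nonlocal double integral in $E[\mathbf{n}]$ into a local pairing with the chemicals $c_\al$, and then differentiate in time. In two dimensions the fundamental solution of $-\de$ is $-\frac{1}{2\pi}\log|x|$, so the elliptic relation $-\de c_\al=\sum_\beta b_{\al\beta}n_\beta$ yields the representation $c_\al(x)=-\frac{1}{2\pi}\sum_\beta b_{\al\beta}\int\log|x-y|\,n_\beta(y)\,dy$. Inserting this into \eqref{Free_energy} recasts the interaction term compactly, so that $E[\mathbf{n}]=\sum_\al\int n_\al\log n_\al\,dx-\frac12\sum_\al\int n_\al c_\al\,dx$. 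This local form is what makes the time differentiation tractable.

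Next I would differentiate the two pieces separately. For the entropy, $\frac{d}{dt}\sum_\al\int n_\al\log n_\al=\sum_\al\int(\log n_\al+1)\pa_t n_\al\,dx$; since $\int\pa_t n_\al\,dx=0$ by the conservation of mass \eqref{Mass_conservation}, the constant term drops and one is left with $\sum_\al\int\log n_\al\,\pa_t n_\al\,dx$. For the interaction part I differentiate both factors, $-\frac12\sum_\al\int\big(\pa_t n_\al\,c_\al+n_\al\,\pa_t c_\al\big)\,dx$. Here the symmetry \eqref{b symmetric} of $\mathbf{B}$ is the one structural ingredient that matters: inserting the potential representation for $c_\al$ and for $\pa_t c_\al$, and relabelling $\al\leftrightarrow\beta$, $x\leftrightarrow y$ in the second summand, the two contributions coincide because $b_{\al\beta}=b_{\beta\al}$ and $\log|x-y|$ is symmetric. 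Hence the interaction derivative collapses to $-\sum_\al\int c_\al\,\pa_t n_\al\,dx$, and combining gives $\frac{d}{dt}E[\mathbf{n}]=\sum_\al\int(\log n_\al-c_\al)\,\pa_t n_\al\,dx$.

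The final step is to substitute the evolution equation in divergence form. Writing $\pa_t n_\al=\na\cdot\big(\na n_\al-n_\al\na c_\al\big)=\na\cdot\big(n_\al(\na\log n_\al-\na c_\al)\big)$ and integrating by parts against $\log n_\al-c_\al$, whose gradient is exactly $\na\log n_\al-\na c_\al$, produces $\frac{d}{dt}E[\mathbf{n}]=-\sum_\al\int n_\al|\na\log n_\al-\na c_\al|^2\,dx=-\mathcal{D}[\mathbf{n}(t)]$. Integrating this identity from $0$ to $t$ gives the claimed free energy dissipation equality \eqref{Free_energy_dissipation}.

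I expect no conceptual obstacle, only bookkeeping: the symmetry of $\mathbf{B}$ is indispensable for merging the two interaction terms, and the only care needed is in justifying the differentiation under the integral sign and the discarded boundary terms at infinity, where the singularity of the logarithmic kernel and the decay of $\mathbf{n}$ enter. Since the lemma is stated at the formal level for smooth solutions, the requisite decay and integrability may be taken for granted, so these justifications are routine.
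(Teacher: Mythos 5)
Your proof is correct and follows essentially the same route as the paper: both differentiate the entropy and interaction parts separately, use the symmetry $b_{\al\beta}=b_{\beta\al}$ with the relabeling $\al\leftrightarrow\beta$, $x\leftrightarrow y$ to collapse the interaction derivative to $-\sum_\al\int c_\al\,\pa_t n_\al\,dx$, and then conclude by writing $\pa_t n_\al=\na\cdot\big(n_\al(\na\log n_\al-\na c_\al)\big)$ and integrating by parts. The only cosmetic difference is that you recast the double integral as $-\tfrac12\sum_\al\int n_\al c_\al\,dx$ before differentiating, while the paper keeps the logarithmic kernel explicit; the computation is the same.
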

\begin{proof} We apply the equation \eqref{EQ:KS_multiple_groups} and the symmetric condition \eqref{b symmetric} to calculate the time evolution of the free energy $E[\mathbf{n}]$
\begin{align}
\frac{d}{dt}E[\mathbf{n}]=&\sum_\al\int (n_\al)_t\log n_\al -\sum_{\al}\int \frac{c_\al(n_\al)_t}{2}dx-\sum_{\al}\int \frac{(c_\al)_t n_\al}{2}dx\nonumber\\
=&\sum_\al\int (n_\al)_t\log n_\al -\sum_{\al}\int \frac{c_\al(n_\al)_t}{2}dx+\sum_{\al,\beta}\frac{b_{\al\beta}}{4\pi}\int (n_\beta)_t(y) \log |x-y| n_\al(x) dxdy\nonumber\\
=&\sum_\al\int (n_\al)_t\log n_\al -\sum_{\al}\int \frac{c_\al(n_\al)_t}{2}dx+\sum_{\al,\beta}\frac{b_{\al\beta}}{4\pi}\int (n_\al)_t(x) \log |x-y| n_\beta(y) dxdy\nonumber\\
=&\sum_{\al}\int (n_\al)_t(\log n_\al - {c_\al})dx.\label{Free_energy_dissipation_1}
\end{align}
Since the equation \eqref{EQ:KS_multiple_groups} can be rewritten as
\begin{align*}
\partial_t n_\al=\na\cdot(n_\al (\na \log n_\al-\na c_\al)),
\end{align*}
applying integration by parts on the time evolution of $E[\mathbf{n}]$ \eqref{Free_energy_dissipation_1} yields
\begin{align*}
\frac{d}{dt}E[\mathbf{n}]=-\sum_{\al}\int n_\al|\na \log n_\al - \na{c_\al}|^2dx\leq 0.
\end{align*}
Now by integration in time, we obtain \eqref{Free_energy_dissipation}.
\end{proof}

Next we give the time evolution of the second moment.
\begin{lem}
Consider the smooth solutions $\mathbf{n}$ to the equation \eqref{EQ:KS_multiple_groups} subject to smooth initial data $\mathbf{n}_0\in H^s$, $s\geq 2$ and symmetric chemical generation matrix $\mathbf{B}$. The time evolution of the total second moment $\sum_{\al\in\mathcal{I}} V_\al$ \eqref{Total_second_moment_evolution}
satisfies the following equality
\begin{align}\label{time_evol_V}
\frac{d}{dt}V[\mathbf{n}]=\frac{d}{dt}\sum_{\al\in\mathcal{I}} V_\al=\bigg(\sum_{\al\in\mathcal{I}} 4 M_\al\bigg)\bigg(1-\frac{Q_{\mathbf{B},\mathbf{M}}[\mathcal{I}]}{8\pi}\bigg),
\end{align}
where $Q_{\mathbf{B},\mathbf{M}}$ is defined in \eqref{Q}.
\end{lem}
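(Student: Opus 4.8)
The plan is to differentiate each $V_\al(t)=\int n_\al|x|^2\,dx$ in time, substitute the evolution equation \eqref{EQ:KS_multiple_groups} written as $\pa_t n_\al=\de n_\al-\na\cdot(n_\al\na c_\al)$, and integrate by parts. The diffusion contribution is handled by the self-adjointness of $\de$: moving both derivatives onto the weight and using $\de|x|^2=4$ in $\rr^2$ gives $\int|x|^2\de n_\al\,dx=4M_\al$. The drift term, after one integration by parts, produces $-\int|x|^2\na\cdot(n_\al\na c_\al)\,dx=\int\na(|x|^2)\cdot n_\al\na c_\al\,dx=2\int x\cdot\na c_\al\,n_\al\,dx$. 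All boundary contributions are dropped on the strength of the decay guaranteed by the finite second moment and the smoothness of the solution.

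Next I would make the drift term explicit via the Newtonian representation of the chemical. Since $-\de c_\al=\sum_\beta b_{\al\beta}n_\beta$ on $\rr^2$ and the fundamental solution of $-\de$ is $-\frac{1}{2\pi}\log|x|$, one has $\na c_\al(x)=-\frac{1}{2\pi}\sum_\beta b_{\al\beta}\int\frac{x-y}{|x-y|^2}n_\beta(y)\,dy$. Substituting and summing over $\al$ yields
\begin{align*}
\frac{d}{dt}\sum_\al V_\al=4\sum_\al M_\al-\frac{1}{\pi}\sum_{\al,\beta}b_{\al\beta}\iint K(x,y)\,n_\al(x)n_\beta(y)\,dx\,dy,\quad K(x,y):=x\cdot\frac{x-y}{|x-y|^2}.
\end{align*}

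The crux is the symmetrization of this double integral. Relabelling $\al\leftrightarrow\beta$ together with $x\leftrightarrow y$ and invoking the symmetry $b_{\al\beta}=b_{\beta\al}$ shows the sum equals its own symmetrized version, so I may replace $K(x,y)$ by $\tfrac12\big(K(x,y)+K(y,x)\big)$. The decisive algebraic identity is
\begin{align*}
K(x,y)+K(y,x)=\frac{x\cdot(x-y)+y\cdot(y-x)}{|x-y|^2}=\frac{|x-y|^2}{|x-y|^2}=1,
\end{align*}
which collapses the singular double integral to $\tfrac12\sum_{\al,\beta}b_{\al\beta}M_\al M_\beta$. The drift contribution therefore reduces to $-\frac{1}{2\pi}\sum_{\al,\beta}b_{\al\beta}M_\al M_\beta$, and recalling the definition \eqref{Q}, namely $Q_{\mathbf{B},\mathbf{M}}[\mathcal{I}]=\big(\sum_{\al,\beta}b_{\al\beta}M_\al M_\beta\big)/\sum_\al M_\al$, I obtain \eqref{time_evol_V} after factoring out $4\sum_\al M_\al$, since $4-\frac{1}{2\pi}Q_{\mathbf{B},\mathbf{M}}[\mathcal I]=4\big(1-\frac{Q_{\mathbf{B},\mathbf{M}}[\mathcal I]}{8\pi}\big)$.

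The main obstacle is not the algebra but the justification of the formal manipulations: the kernel $\frac{x-y}{|x-y|^2}$ is singular on the diagonal, so I must check that the double integral is absolutely convergent — which the symmetrization conveniently arranges, since the symmetrized kernel is the bounded constant $1$ and $\sum_\al M_\al<\infty$ — and that every integration by parts is legitimate, relying on the decay afforded by the finite second moment in \eqref{General_condition_for_initial_data}. As the statement is labelled a formal computation, it suffices to carry these steps out for smooth, sufficiently decaying solutions and to record that the boundary terms vanish.
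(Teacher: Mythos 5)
Your proposal is correct and follows essentially the same route as the paper's proof: differentiate the second moment, integrate by parts to extract $4M_\al$ from the diffusion and $2\int x\cdot\na c_\al\, n_\al\,dx$ from the drift, insert the Newtonian kernel representation of $\na c_\al$, and symmetrize the double integral via $b_{\al\beta}=b_{\beta\al}$ together with the identity $x\cdot(x-y)+y\cdot(y-x)=|x-y|^2$, which renders the kernel constant and yields $-\frac{1}{2\pi}\sum_{\al,\beta}b_{\al\beta}M_\al M_\beta$. Your added remarks on absolute convergence and the vanishing of boundary terms go slightly beyond the paper's purely formal computation, but the core argument is identical.
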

\begin{proof}
Applying the equation \eqref{EQ:KS_multiple_groups}, the definition of $Q_{\mathbf{B},\mathbf{M}}$ \eqref{Q} and the symmetry condition \eqref{b symmetric}, we calculate the time evolution of the total second moment as follows
\begin{align*}
\frac{d}{dt}\sum_{\al} V_\al
=&\sum_\al 4 M_\al+\sum_{\al} \int 2x \cdot (\na c_\al n_\al)dx\\
=&\sum_\al 4 M_\al-\sum_{\al,\beta} b_{\al\beta}\frac{1}{2\pi}\iint \frac{2x \cdot (x-y)}{|x-y|^2} n_\beta(y) n_\al(x)dxdy\\
=&\sum_\al 4 M_\al-\sum_{\al,\beta} b_{\al\beta}\frac{1}{4\pi}\iint \frac{2(x -y)\cdot (x-y)}{|x-y|^2} n_\beta(y) n_\al(x)dxdy\\
=&\sum_\al 4 M_\al-\sum_{\al,\beta} b_{\al\beta}\frac{M_\al M_\beta}{2\pi}\\
=&\bigg(\sum_\al 4 M_\al\bigg)\bigg(1-\frac{Q_{\mathbf{B},\mathbf{M}}[\mathcal{I}]}{8\pi}\bigg).
\end{align*}
This completes the proof of the lemma.
\end{proof}
\begin{rmk}
Note that in the proofs of these two lemmas, the symmetry of the matrix $\mathbf{B}$ is always assumed. In the non-symmetric case, i.e., $b_{\al\beta}\neq b_{\beta\al}$, neither of these lemmas can be applied. This is the main difficulty we faced when applying the free energy machinery in the non-symmetric case.
\end{rmk}
\begin{proof}[Proof of Theorem \ref{Theorem_of_supercritical_blow_up}]
Suppose that the solution $\mathbf{n}$ is smooth for all time. By the assumption $Q_{\mathbf{B},\mathbf{M}}[\mathcal{I}]>8\pi$, we have that the time evolution \eqref{time_evol_V} is a strictly negative constant. As a result, the total second moment will decrease to zero at a finite time $T_\star$ while the $L^1$ norm of the solution $\sum_{\al\in \mathcal{I}}|n_\al|_1$ is preserved. At time $T_\star$, the smoothness assumption of the solution will be  contradicted. Hence the solution must lose $H^s$ regularity before $T_\star$.
\end{proof}
\section{Global existence for subcritical data}
\subsection{A priori estimate on entropy}
In the case of a single species, the analysis of  PKS equation proceeds by  combining an a priori estimate of the free energy \eqref{free_energy_dissipation} together with a logarithmic Hardy-Littlewood-Sobolev inequality to recover a uniform in time a priori bound on the entropy, which in turn yields existence of free energy solution for all time. In the present context of a \emph{coupled} system of PKS equations, one seeks the corresponding log-Hardy-Littlewood-Sobolev inequality for systems which guarantees a finite lower bound of the multi-species functional $\Psi[\mathbf{n}], \ 
\mathbf{n}:=\{n_\al\}_{\al\in {\mathcal I}}$,
\begin{equation}\label{Psi R2}
\Psi[\mathbf{n}]:=\sum_{\al\in \mathcal{I}}\int_{\rr^2} n_\al\log n_\al dx+\frac{1}{4\pi}\sum_{\al,\beta\in \mathcal{I}}a_{\al\beta}\iint_{\rr^2\times \rr^2} n_\al(x)\log|x-y|n_\beta(y)dxdy,
\end{equation}
overall $n_\al$'s in the function space
\begin{equation}\label{Gamma M}
\begin{split}
 \Gamma_\mathbf{M}(\rr^2)=\Big\{(n_\al)_{\al\in \mathcal I} &\ \ n_\al\geq 0, \ \Big| \    \int_{\rr^2}n_\al|\log n_\al|dx<\infty, \\
 &  \int_{\rr^2} n_\al dx=M_\al, \quad  \int_{\rr^2} n_\al\log (1+|x|^2)dx<\infty,\forall \al\in \mathcal{I}\Big\}.
 \end{split}
\end{equation}
To this end we follow \cite{SW05}. For an arbitrary  subset of our index set, $\mathcal{J}\subset \mathcal{I}$,  one  defines the  quantity,
\begin{equation}\label{Lambda J}
\Lambda_\JJ(\mathbf{M}):=8\pi\sum_{\al\in \JJ}M_\al-\sum_{\al,\beta\in \JJ}a_{\al\beta}M_\al M_\beta,\qquad\mathbf{M}:=(M_\al)_{\al\in \mathcal{I}},\quad |\mathcal{I}|<\infty.
\end{equation}
\begin{thm}[{\cite[Theorem 4]{SW05}}]\label{thm:log HLS for systems}
Let $\mathbf{A}=(a_{\al\beta})_{\al,\beta\in \mathcal{I}}$ be  a symmetric matrix with positive entries $a_{\al\beta}\geq0$.

a) The following
\begin{align}\label{general_condition}
\left\{\begin{array}{rrr}\ba
&\Lambda_{\mathcal{I}}(\mathbf{M})=0,\\
&\Lambda_\JJ(\mathbf{M})\geq 0, \quad\forall \emptyset\neq \JJ\subset \mathcal{I},\\
&\text{if }\Lambda_\JJ(\mathbf{M})=0\text{ for some }\JJ, \text{ then }a_{\al\al}+\Lambda_{\JJ\backslash \{\al\}}(\mathbf{M})>0,\quad \forall \al\in \JJ,\ea\end{array}\right.
\end{align}
is a necessary and sufficient condition for the lower-bound of the PKS functional $\displaystyle \min_{\mathbf{n}\in \Gamma_\mathbf{M}(\rr^2)}\Psi[\mathbf{n}]$;

b) Moreover, the functional  $\Psi[\mathbf{n}]$ admits a minimizer over  $\Gamma_{\mathbf{M}}(\rr^2)$ if and only if $\Lambda_{\mathcal{I}}(\mathbf{M})=0$ and $\Lambda_\JJ(\mathbf{M})> 0$ for any $\emptyset\neq \mathcal{J}\subsetneqq \mathcal{I}$. In this case, there exists a constant, $C=C_{lHLS}$ depending on $\mathbf{M}$ and $\mathbf{B}=\{b_{\al\beta}\}$, such that the following holds
\begin{align}
\Psi[\mathbf{n}]\geq -C_{lHLS}\left(\mathbf{M},\mathbf{B}\right).\label{log_HLS_for_system}
\end{align}
\end{thm}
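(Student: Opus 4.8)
The plan is to read the three algebraic conditions in \eqref{general_condition} as the precise obstructions to loss of compactness for the minimization of $\Psi$ over $\Gamma_{\mathbf{M}}(\rr^2)$, and to establish both the characterization (a) and the existence statement (b) by the direct method combined with a concentration--compactness analysis. The organizing principle is that each quantity $\Lambda_\JJ(\mathbf{M})$ is exactly the logarithmic scaling exponent attached to a joint concentration of the sub-collection of species indexed by $\JJ$. I would first extract this scaling law, use it to prove necessity via explicit test families, and then run it in reverse to forbid the escape of a minimizing sequence, thereby obtaining the lower bound and, under the strict inequalities, a minimizer.

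\textbf{Necessity via scaling.} For $\lambda>0$ consider the mass-preserving dilation $n_{\al,\lambda}(x):=\lambda^2 n_\al(\lambda x)$ applied simultaneously to all $\al\in\JJ$, keeping $n_\beta$ fixed for $\beta\notin\JJ$. A change of variables gives, for $\al,\beta\in\JJ$,
\begin{align*}
\int n_{\al,\lambda}\log n_{\al,\lambda}\,dx &= \int n_\al\log n_\al\,dx + 2M_\al\log\lambda,\\
\iint n_{\al,\lambda}(x)\log|x-y| n_{\beta,\lambda}(y)\,dx\,dy &= \iint n_\al(x)\log|x-y| n_\beta(y)\,dx\,dy - M_\al M_\beta\log\lambda,
\end{align*}
while for $\al\in\JJ$, $\beta\notin\JJ$ the cross term stays bounded as $\lambda\to\infty$ (the dilated factor concentrates at a point). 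Summing, the leading term in $\log\lambda$ is exactly $\tfrac{1}{4\pi}\Lambda_\JJ(\mathbf{M})\log\lambda$, so letting $\lambda\to\infty$ forces $\Lambda_\JJ(\mathbf{M})\ge0$ for every $\JJ$; applying the full dilation $\JJ=\mathcal{I}$ in both directions $\lambda\to0,\infty$ (where there are no outside species to contribute) forces $\Lambda_{\mathcal{I}}(\mathbf{M})=0$. The third condition governs the threshold case $\Lambda_\JJ(\mathbf{M})=0$, where the first-order exponent vanishes: separating one species $\al$ and concentrating it at a different rate exposes a subleading cost controlled by the self-interaction $a_{\al\al}$ together with $\Lambda_{\JJ\setminus\{\al\}}(\mathbf{M})$, which must be strictly positive to prevent $\Psi\to-\infty$.

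\textbf{Lower bound and existence.} For sufficiency I would first reduce to radially symmetric, non-increasing profiles centered at a common point: the entropy $\int n_\al\log n_\al$ is invariant under symmetric-decreasing rearrangement, and by the Riesz rearrangement inequality applied to the radially decreasing kernel $-\log|x-y|$ (truncated into near/far parts to handle its sign change and unbounded tails, using the log-moment finiteness built into $\Gamma_{\mathbf{M}}$) each coupling term with $a_{\al\beta}\ge0$ only decreases, so $\inf\Psi$ is unchanged. I then take a minimizing sequence and, using the scale invariance granted by $\Lambda_{\mathcal{I}}(\mathbf{M})=0$, normalize its scale. A Lions-type dichotomy applies: vanishing is excluded by the mass constraints $\int n_\al=M_\al>0$, and concentration of any sub-collection $\JJ$ is penalized at order $\log\lambda$ precisely by $\Lambda_\JJ(\mathbf{M})$. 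The hypotheses $\Lambda_\JJ(\mathbf{M})\ge0$ keep $\Psi$ bounded below, yielding \eqref{log_HLS_for_system}, while the strict inequalities $\Lambda_\JJ(\mathbf{M})>0$ for proper $\JJ$ forbid any leakage of mass to a sub-collection and deliver compactness of the minimizing sequence, hence a minimizer.

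\textbf{Main obstacle.} The crux is the \emph{critical} constraint $\Lambda_{\mathcal{I}}(\mathbf{M})=0$, which makes the infimum exactly scale invariant and removes any naive coercivity, so compactness can only be recovered modulo dilations; together with this, the borderline subsets with $\Lambda_\JJ(\mathbf{M})=0$ demand the sharp second-order logarithmic expansion encoded in the third line of \eqref{general_condition}, and the off-diagonal coupling $a_{\al\beta}$ must be reconciled with the scaling at every level. An alternative that may be cleaner at the threshold is to dualize, via stereographic projection, to a vector Moser--Trudinger--Onofri inequality on $S^2$ and run Carlen--Loss competing-symmetry arguments: the same thresholds $\Lambda_\JJ$ reappear as the conditions for the dual functional to stay finite, and matching the system coupling to the conformal invariance is the technical heart of either route.
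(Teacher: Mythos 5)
The first thing to say is that the paper contains no proof of this statement: it is quoted verbatim, with attribution, as Theorem 4 of Shafrir--Wolansky \cite{SW05}, and is used downstream as a black box. The only trace of a proof idea in the paper is the remark immediately following the theorem, which records (citing \cite[p.~414]{SW05}) exactly the scaling observation you develop. So the comparison here is not against a paper proof but against the cited literature, and the question is whether your proposal would stand as an independent proof of the cited theorem. Your necessity computation for the first two conditions is correct: dilating the species in $\JJ$ by $n_{\al,\lambda}(x)=\lambda^2 n_\al(\lambda x)$ shifts each entropy by $2M_\al\log\lambda$ and each interaction term with $\al,\beta\in\JJ$ by $-\frac{a_{\al\beta}}{4\pi}M_\al M_\beta\log\lambda$ while the cross terms stay bounded, so $\Psi$ changes by $\frac{1}{4\pi}\Lambda_\JJ(\mathbf{M})\log\lambda+O(1)$, and sending $\lambda\to\infty$ (and also $\lambda\to 0$ when $\JJ=\mathcal{I}$) forces $\Lambda_\JJ(\mathbf{M})\geq 0$ and $\Lambda_{\mathcal{I}}(\mathbf{M})=0$. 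That part is sound, but it is also the part the paper already records as known.

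The genuine gap is everything beyond that, which is the actual content of \cite{SW05}. First, the necessity of the third condition in \eqref{general_condition} is asserted, not proved: when $\Lambda_\JJ(\mathbf{M})=0$ the first-order term in your expansion vanishes identically, and extracting the precise second-order quantity $a_{\al\al}+\Lambda_{\JJ\backslash\{\al\}}(\mathbf{M})$ requires an explicit two-parameter test family and a careful expansion that your text does not supply. Second, and more seriously, the sufficiency direction --- the uniform lower bound \eqref{log_HLS_for_system} under \eqref{general_condition}, and existence of a minimizer under the strict inequalities --- is the hard half of the theorem, and your outline reduces it to the claim that concentration of a sub-collection is penalized at order $\log\lambda$ by $\Lambda_\JJ(\mathbf{M})$. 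That claim only excludes loss of compactness along a single dilation about one center. A general minimizing sequence can split its mass into several bubbles at diverging centers and disparate scales, can send partial mass of a single species to infinity, or can concentrate along a borderline subset with $\Lambda_\JJ(\mathbf{M})=0$, where your logarithmic penalty is identically zero and only the third condition can save the bound; controlling these scenarios is precisely where \cite{SW05} does real work (via a dual Moser--Trudinger inequality on $S^2$ and an induction over subsets of $\mathcal{I}$ --- the route you mention only as an ``alternative''). Third, the ``only if'' half of part (b), that existence of a minimizer forces $\Lambda_\JJ(\mathbf{M})>0$ on all proper subsets, is never addressed. As it stands, the proposal is a sensible research plan whose completed portion coincides with what the paper already cites as a remark, while the substance of the theorem remains unproved.
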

\begin{rmk}
As noted in \cite[p. 414]{SW05}, if the condition $\Lambda_{\mathcal{J}}\geq0$ is violated for some $\emptyset\neq \mathcal{J}\subsetneqq \mathcal{I}$, then  a scaling argument yields that the functional $\Psi[\mathbf{n}]$ on the sphere $S^2$ has no lower bound. One might be able to use this property to construct blow-up solutions \emph{on the plane}, when the following strict monotonicity  fails (recalling the functional $Q_{\mathbf{B}_+,\mathbf{M}}$ in  \eqref{Q}
\[
Q_{\mathbf{B}_+, \mathbf{M}}(\mathcal{J})<Q_{\mathbf{B}_+,\mathbf{M}}(\mathcal{I}) \ \mbox{ for all } \  \mathcal{J}\subsetneqq \mathcal{I}.
\]
\end{rmk}
The above theorem yields the following.
\begin{pro}\label{Prop_1}
Consider the equation \eqref{EQ:KS_multiple_groups} subject to smooth initial data and chemical generation coefficient matrix $\mathbf{B}$.  Further assume that $\mathbf{B}_+$ is not a zero matrix. Suppose that \eqref{eqs:subcritical_mass_condition} holds ,
\[
Q_{\mathbf{B}_+,\mathbf{M}}[\JJ]< Q_{\mathbf{B}_+,\mathbf{M}}[\mathcal{I}]<8\pi, \qquad \emptyset\neq\JJ\subsetneqq \mathcal{I},
\]
 then the total entropy $\sum_\al\int n_\al\log n_\al dx$ is bounded for all finite time.
\end{pro}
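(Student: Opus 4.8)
The plan is to reproduce the single-species mechanism: pair the \emph{a priori} upper bound coming from free energy dissipation with the \emph{lower} bound coming from the log-Hardy--Littlewood--Sobolev inequality for systems, Theorem~\ref{thm:log HLS for systems}. Abbreviate the entropy and the pairwise interaction energies by
\[
S[\mathbf{n}]:=\sum_\al\int n_\al\log n_\al\,dx,\qquad
W_{\al\beta}[\mathbf{n}]:=\frac{1}{4\pi}\iint n_\al(x)\log|x-y|\,n_\beta(y)\,dxdy .
\]
Splitting $b_{\al\beta}=(b_{\al\beta})_+-(b_{\al\beta})_-$ and writing $P:=\sum_{\al,\beta}(b_{\al\beta})_+W_{\al\beta}$, $N:=\sum_{\al,\beta}(b_{\al\beta})_-W_{\al\beta}$, the free energy \eqref{Free_energy} is $E[\mathbf{n}]=S[\mathbf{n}]+P[\mathbf{n}]-N[\mathbf{n}]$, and the dissipation lemma of Section~2 (here we use that $\mathbf{B}$ is symmetric) gives $S+P-N=E[\mathbf{n}(t)]\le E[\mathbf{n}_0]$. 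Thus everything reduces to bounding $-P+N$ from above.

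The crux is to apply Theorem~\ref{thm:log HLS for systems} not to $\mathbf{B}$ itself but to the single rescaling of its positive part, $\mathbf{A}:=\theta\,\mathbf{B}_+$, with
\[
\theta:=\frac{8\pi}{Q_{\mathbf{B}_+,\mathbf{M}}[\mathcal{I}]}>1,
\]
where $\theta$ is finite since $\mathbf{B}_+\neq\mathbf{0}$ and $\theta>1$ is exactly \eqref{subcritical_mass_condition_a}. This choice forces the normalization $\Lambda^{\mathbf{A}}_{\mathcal{I}}(\mathbf{M})=8\pi\sum_\al M_\al-\theta\sum_{\al,\beta}(b_{\al\beta})_+M_\al M_\beta=0$; and for every $\emptyset\neq\JJ\subsetneqq\mathcal{I}$ one has $\Lambda^{\mathbf{A}}_{\JJ}(\mathbf{M})>0\iff\theta\,Q_{\mathbf{B}_+,\mathbf{M}}[\JJ]<8\pi\iff Q_{\mathbf{B}_+,\mathbf{M}}[\JJ]<Q_{\mathbf{B}_+,\mathbf{M}}[\mathcal{I}]$, which is precisely the strict monotonicity \eqref{subcritical_mass_condition_b}. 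Hence the hypotheses of Theorem~\ref{thm:log HLS for systems}(b) hold for $\mathbf{A}$, and since each $\mathbf{n}(t)$ lies in $\Gamma_{\mathbf{M}}(\rr^2)$ we obtain the uniform lower bound $S[\mathbf{n}]+\theta\,P[\mathbf{n}]=\Psi_{\mathbf{A}}[\mathbf{n}]\ge-C_{lHLS}$. Eliminating $P$ between this and free energy dissipation yields $\big(1-\theta^{-1}\big)S[\mathbf{n}]\le E[\mathbf{n}_0]+\theta^{-1}C_{lHLS}+N[\mathbf{n}]$, and the prefactor $1-\theta^{-1}$ is strictly positive because $\theta>1$.

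It remains to bound $N[\mathbf{n}]$ from above, and this is the main obstacle, since the logarithmic kernel is sign-indefinite and unbounded at both ends, so $N$ is not controlled by the log-HLS inequality. I would split $\log=\log_+-\log_-$; as $n_\al,n_\beta\ge0$ and $(b_{\al\beta})_-\ge0$, discarding the nonnegative short-range part gives $N\le\frac{1}{4\pi}\sum_{\al,\beta}(b_{\al\beta})_-\iint n_\al\log_+|x-y|\,n_\beta$. The elementary bound $\log_+|x-y|\le\frac12\log(1+|x|^2)+\frac12\log(1+|y|^2)+C_0$ together with $\log(1+|x|^2)\le|x|^2$ then controls $N$ by the total second moment, $N[\mathbf{n}(t)]\le C\,V[\mathbf{n}(t)]+C'$. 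By \eqref{Total_second_moment_evolution} the total second moment grows affinely in time (its coefficient is positive since $Q_{\mathbf{B},\mathbf{M}}[\mathcal{I}]\le Q_{\mathbf{B}_+,\mathbf{M}}[\mathcal{I}]<8\pi$) and is therefore finite on every bounded time interval, so $S[\mathbf{n}(t)]$ is bounded above for all finite time. The matching lower bound on the entropy is routine: the Gaussian comparison $\int n_\al\log n_\al\ge-\int n_\al|x|^2-C(M_\al)$ again bounds it below by the (finite) second moment, which completes the proof.
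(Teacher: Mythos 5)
Your proof is correct and takes essentially the same route as the paper: free-energy dissipation (using symmetry of $\mathbf{B}$) combined with the Shafrir--Wolansky log-HLS theorem applied to the rescaled positive part $\mathbf{A}=\tfrac{8\pi}{Q_{\mathbf{B}_+,\mathbf{M}}[\mathcal{I}]}\mathbf{B}_+$ --- identical to the paper's $a_{\al\beta}=(b_{\al\beta})_+/\theta$ with $\theta=Q_{\mathbf{B}_+,\mathbf{M}}[\mathcal{I}]/8\pi$, i.e.\ your $\theta$ is the reciprocal of theirs --- followed by the same control of the negative-coefficient interactions via second moments after discarding the short-range part (your $\log_+$ split is the paper's restriction to $|x-y|\ge 1$). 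The only additions are cosmetic: your elimination algebra is equivalent to the paper's convex-combination rewriting of $E[\mathbf{n}]$, and your closing Gaussian lower bound on the entropy supplements the paper's upper bound using exactly the estimate it records separately as the lemma on the negative part of the entropy.
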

\begin{rmk}
We will not lose generality if we assume that $\mathbf{B}_+$ is not a zero matrix. If all the entries in $\mathbf{B}$ is negative, classical techniques are sufficient to analyze the system.
\end{rmk}

\begin{proof}First we rewrite the free energy dissipation relation \eqref{Free_energy_dissipation} as follows
\begin{align}
E[\mathbf{n}_{0}]\geq E[\mathbf{n}]\geq&\sum_{\al\in \mathcal{I}}\int n_\al\log n_\al dx+\sum_{\al,\beta\in \mathcal{I}}\frac{(b_{\al\beta})_+}{4\pi}\iint n_\al(x)\log|x-y|n_\beta(y)dxdy\nonumber\\
&-\sum_{\al,\beta\in\mathcal{I}}\frac{(b_{\al\beta})_-}{4\pi}\iint_{|x-y|\geq 1}n_\al(x)\log|x-y|n_\beta(y)dxdy\nonumber\\
=&(1-\theta)\sum_{\al\in\mathcal{I}}\int n_\al\log n_\al dx\nonumber\\
 &+\theta\left(\sum_{\al\in\mathcal{I}} \int n_\al\log n_\al dx+\frac{1}{4\pi}\sum_{\al,\beta\in \mathcal{I}}\frac{(b_{\al\beta})_+}{\theta}\iint n_\al(x)\log |x-y| n_\beta(y)dxdy\right)\nonumber\\
&-\sum_{\al,\beta\in\mc{I}}\frac{(b_{\al\beta})_-}{4\pi}(M_\al V_\beta+M_\beta V_\al).\label{En_lower_bound_1}
\end{align}
Define $a_{\al\beta}:=(b_{\al\beta})_+/\theta\geq 0$, $0<\theta<1$.

In order to apply Theorem \ref{thm:log HLS for systems}, we need to check the condition \eqref{general_condition}. By choosing $\theta$ properly, we make sure that the first condition $\Lambda_{\mathcal{I}}(\mathbf{M})=0$ in \eqref{general_condition} is satisfied. Direct calculation yields that
\begin{align*}
\Lambda_{\mc{I}}(\mathbf{M})=0
\Leftrightarrow&\theta=\frac{\sum_{\al,\beta\in \mathcal{I}}{(b_{\al\beta})_+}M_\al M_\beta}{8\pi\sum_{\beta\in \mathcal{I}}M_\beta}=\frac{Q_{\mathbf{B}_+,\mathbf{M}}[\mathcal{I}]}{8\pi}.
\end{align*}
Note that the assumption $Q_{\mathbf{B}_+,\mathbf{M}}(\mc{I})<8\pi$ guarantees that $\theta<1$. Next we check the remaining conditions in  \eqref{general_condition}. Recalling the definition of $\theta$ and $Q_{\mathbf{B}_+,\mathbf{M}}[\JJ]$, the following condition guarantees the existence of the minimizer of  $\Psi$ in $\Gamma_{\mathbf{M}}(\rr^2)$
\begin{align*}
Q_{\mathbf{B}_+,\mathbf{M}}[\mathcal{I}]&> Q_{\mathbf{B}_+,\mathbf{M}}[\JJ],\quad\forall\emptyset\neq \JJ\subsetneqq \mathcal{I},\\
\Leftrightarrow&\Lambda_\JJ(\mathbf{M})=8\pi\sum_{\beta\in \JJ}M_\beta-\frac{{8\pi\sum_{\beta\in \mathcal{I}}M_\beta}}{\sum_{\al,\beta\in \mathcal{I}}{(b_{\al\beta})_+}M_\al M_\beta}\sum_{\al,\beta\in \JJ}(b_{\al\beta})_+M_\al M_\beta> 0,\quad\forall\emptyset\neq \JJ\subsetneqq \mathcal{I},\\
\Leftrightarrow
&\Lambda_\JJ(\mathbf{M})> 0,\quad\forall\emptyset\neq \JJ\subsetneq \mathcal{I}.
\end{align*}
Now combining Theorem \ref{thm:log HLS for systems}, the boundedness of the second moment \eqref{time_evol_V} and the fact that $0<\theta<1$ yields that
\begin{align*}
 E[\mathbf{n}_{0}]\geq&E[\mathbf{n}]\geq(1-\theta)\sum_{\al\in\mathcal{I}}\int n_\al\log n_\al-\theta C_{lHLS}-\frac{1}{4\pi}\sum_{\al,\beta\in\mc I}(b_{\al\beta})_-(M_\al V_\beta+M_\beta V_\al),\\
\Rightarrow&\sum_{\al\in \mathcal{I}}\int n_\al\log n_\al dx\leq \frac{E[\mathbf{n}_0]+\theta C_{lHLS}+\frac{1}{2\pi}\sum_{\al,\beta}(b_{\al\beta})_-M_\al V_\beta}{1-\theta}<\infty.
\end{align*}
This completes the proof.
\end{proof}

The proof above  shows  that the log-HLS will not hold if $supp(\textbf{B})\subsetneqq\mathcal{I}$, or else we can choose  $\mathcal{J}=supp(\mathbf{B})\subsetneqq \mathcal{I}$ for which
\begin{align*}
\Lambda_\JJ(\mathbf{M})=8\pi\sum_{\beta\in \JJ}M_\beta-\frac{{8\pi\sum_{\beta\in \mathcal{I}}M_\beta}}{\sum_{\al,\beta\in \mathcal{I}}{(b_{\al\beta})_+}M_\al M_\beta}\sum_{\al,\beta\in \JJ}(b_{\al\beta})_+M_\al M_\beta< 0.
\end{align*}
The precise characterization of $\mathbf{B}$'s  such that  \eqref{eqs:subcritical_mass_condition} holds remains open; consult our conjecture in remark \ref{rmk:conjecture} below. 

The precise characterization of $\mathbf{B}$'s  such that both conditions \eqref{eqs:subcritical_mass_condition} hold remains open. We prove below the a sufficient condition, claimed in \eqref{eq:sufficient},  for the upper-bound \eqref{subcritical_mass_condition_a} to hold.
\begin{pro}\label{prop:sufficient}
Let $\mathbf{A}=(a_{\al\beta})_{\al,\beta\in \mathcal{I}}$ be  a symmetric matrix with positive entries $a_{\al\beta}\geq0$, then 
\[
Q_{{\mathbf A},M}[{\mathcal I}] < \rho(\mathbf{A})\max_\al M_\al.
\]
\end{pro}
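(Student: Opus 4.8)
The plan is to combine the spectral bound for the symmetric quadratic form $\lan\mathbf{A}\mathbf{M},\mathbf{M}\ran$ with an elementary comparison between the $\ell^2$- and $\ell^1$-norms of the mass vector $\mathbf{M}$.

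First I would invoke the Rayleigh--Ritz characterization of the top eigenvalue: since $\mathbf{A}$ is symmetric, $\lan\mathbf{A}\mathbf{x},\mathbf{x}\ran\le\rho(\mathbf{A})|\mathbf{x}|_2^2$ for every $\mathbf{x}$, with equality iff $\mathbf{x}$ is an eigenvector associated with $\rho(\mathbf{A})=\max_\al\lambda_\al(\mathbf{A})$. (The nonnegativity of the entries is not needed for this step, though by Perron--Frobenius it guarantees $\rho(\mathbf{A})\ge0$ and that it is attained on a nonnegative eigenvector.) Applying this with $\mathbf{x}=\mathbf{M}$ gives $\lan\mathbf{A}\mathbf{M},\mathbf{M}\ran\le\rho(\mathbf{A})|\mathbf{M}|_2^2$. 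Next, since each $M_\al\ge0$, I would use the comparison $|\mathbf{M}|_2^2=\sum_\al M_\al^2\le(\max_\al M_\al)\sum_\al M_\al=(\max_\al M_\al)|\mathbf{M}|_1$. Dividing by $|\mathbf{M}|_1>0$ and chaining the two estimates yields
\[
Q_{\mathbf{A},\mathbf{M}}[\mathcal{I}]=\frac{\lan\mathbf{A}\mathbf{M},\mathbf{M}\ran}{|\mathbf{M}|_1}\le\rho(\mathbf{A})\frac{|\mathbf{M}|_2^2}{|\mathbf{M}|_1}\le\rho(\mathbf{A})\max_\al M_\al,
\]
which is the asserted bound in its non-strict form.

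The one delicate point--and the step I expect to be the main obstacle--is promoting $\le$ to the strict inequality stated. Both estimates are saturated only in degenerate configurations: the norm comparison is an equality exactly when all the positive masses $M_\al$ coincide, while the Rayleigh bound is an equality exactly when $\mathbf{M}$ is a leading eigenvector of $\mathbf{A}$. I would therefore obtain strictness by ruling out that these two conditions hold \emph{simultaneously}: for instance, when $\mathbf{A}$ is irreducible, a strictly positive vector with all equal entries can be a top eigenvector only if $\mathbf{A}$ has constant row sums, so strictness can fail only in that exceptional, perfectly balanced situation. For the purpose of the Remark following \eqref{eq:sufficient} this subtlety is in any case immaterial: the non-strict bound already shows that $\rho(\mathbf{B}_+)\max_\al M_\al<8\pi$ forces $Q_{\mathbf{B}_+,\mathbf{M}}[\mathcal{I}]<8\pi$, which is precisely \eqref{subcritical_mass_condition_a}.
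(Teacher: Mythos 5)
Your proof is correct and takes essentially the same approach as the paper: the paper expands $\lan \mathbf{A}\mathbf{M},\mathbf{M}\ran$ in the orthonormal eigensystem of $\mathbf{A}$ (which is precisely your Rayleigh--Ritz bound) and then applies $|\mathbf{M}|_2^2\leq |\mathbf{M}|_1\max_\al M_\al$. Your caveat about strictness is apt, since the paper's own proof likewise concludes only the non-strict $Q_{\mathbf{A},\mathbf{M}}[\mathcal{I}]\leq \rho(\mathbf{A})\max_\al M_\al$ despite the $<$ in the statement; as you observe, the non-strict bound is all that is needed for the application in \eqref{eq:sufficient}, i.e., to deduce \eqref{subcritical_mass_condition_a}.
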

To verify \eqref{general_condition}, we express $\mathbf{A}$ in terms of its spectral decomposition $\mathbf{A}=\sum_\al \lambda_\al \mathbf{w}_\al\mathbf{w}^*_\al$ where $\{(\lambda_\al,\mathbf{w}_\al)\}$ are the ortho-normal eigensystem of $\mathbf{A}$. We compute
\[
\langle {\mathbf A} {\mathbf M},{\mathbf M}\rangle = \sum_\al \lambda_\al |\langle \mathbf{M},\mathbf{w}_\al\rangle|^2 \leq \max_\al \lambda_\al |{\mathbf M}|_2^2 \leq \max_\al \lambda_\al|{\mathbf M}|_1 \max_\al M_\al
\]
and the result follows, $Q_{{\mathbf A},{\mathbf M}}[{\mathcal I}] \leq \rho({\mathbf A}) \max_\al M_\al$.
\subsection{Local existence and extension theorems}
Before introducing the local existence theorems of the free energy solutions, we regularize the system \eqref{EQ:KS_multiple_groups} by appropriately truncating the singularity in the convolution kernel $\na K=\na(-\de)^{-1}$:
\begin{align*}
K^\ep(z):=&K^1\bigg(\frac{|z|}{\ep}\bigg)-\frac{1}{2\pi}\log\ep;\\
K^1(|z|):=&-\frac{1}{2\pi}\log |z|,\quad |z|\geq 4,\\
K^1(|z|):=&0,\quad |z|\leq 1
\end{align*}
to get the following regularized multi-species PKS system
\begin{subequations}\label{EQ:KS_multiple_groups_regularized}
\begin{align}
\partial_t n_\al^\ep +\na\cdot (\na c_\al^\ep n_\al^\ep)&=\de n_\al^\ep,\\
 c_\al^\ep & =K^\ep*\bigg(\sum_{\beta\in\mathcal{I}}b_{\al\beta}n_\beta\bigg),\\
n_{\al}^\ep(t=0) &=\min\{n_{\al 0},\ep^{-1}\} ,\quad \forall\al\in \mc I, \quad x\in \rr^2.
\end{align}\end{subequations}
Note that the masses of the solutions $M_\al=|n_\al|_1$ are preserved in time.

Since $|\na K^\ep|_\infty$ is bounded for any fixed positive $\ep$, applying the Young's convolution inequality yields that the vector field $\na c_\al$ is bounded in $L^\infty$, i.e., $\displaystyle{\sum_\al|\na c_\al|_\infty\leq \sum_{\al,\beta} |\na K^\ep|_\infty |b_{\al\beta}|M_\beta}$. Now standard convection-diffusion PDE theory can be applied to show that the regularized system \eqref{EQ:KS_multiple_groups_regularized} admits global solutions in $L^2((0,T]; H^1)\cap C((0,T]; L^2)$. 

The following two propositions are the main local existence theorems.
\begin{pro}\label{pro 2.1 multiple group}(Criterion for Local Existence)
Let $(n_\al^\ep)_{\al\in\mathcal{I}}$ be the solutions to the regularized multi-species PKS system \eqref{EQ:KS_multiple_groups_regularized} on the time interval $[0,T)$ subject to initial constrain \eqref{General_condition_for_initial_data}. If the total entropy $\sum_\al S[n_\al^\ep]$ is bounded from above uniformly in $\ep$, i.e.,
\begin{align}
\sum_{\al\in\mathcal{I}}S[n^\ep_\al(t)]=\sum_{\al\in\mathcal{I}}\int n^\ep_\al(x,t)\log n^\ep_\al(x,t) dx\leq C_{L\log L}<\infty, \quad \forall t\in [0,T],
\end{align}
then there exists a subsequence of $\{(n_\al^\ep)_{\al \in \mathcal{I}}\}_{\ep>0}$ converging in the $L_t^2L_x^2$ strong topology to a non-negative free-energy solution to the multi-species PKS system \eqref{EQ:KS_multiple_groups} subject to initial data $({n_\al})_{0}$ on the time interval $[0,T]$. 
\end{pro}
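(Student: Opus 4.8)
The plan is to run a compactness argument on the regularized family $\{(n_\al^\ep)_{\al\in\mathcal I}\}_{\ep>0}$, in the spirit of the single-species theory adapted to the coupled system. First I would collect the uniform-in-$\ep$ a priori bounds. The hypothesis supplies the entropy bound $\sum_\al S[n_\al^\ep]\le C_{L\log L}$, uniform in $\ep$ and $t\in[0,T]$; combined with the conserved masses $|n_\al^\ep|_1\equiv M_\al$ and a uniform control of the second moment $\sum_\al\int n_\al^\ep|x|^2\,dx$ on the finite interval $[0,T]$ (compare the linear growth \eqref{time_evol_V}), a de la Vall\'ee--Poussin / Dunford--Pettis argument yields uniform integrability together with tightness, hence weak-$L^1$ precompactness of each $\{n_\al^\ep\}$. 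Since the regularized system inherits the free-energy dissipation equality \eqref{Free_energy_dissipation}, the dissipation $\int_0^T\mathcal D[\mathbf n^\ep]\,ds$ is bounded uniformly in $\ep$.

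Next I would upgrade weak to strong compactness. Using the identity $n_\al|\na\log n_\al|^2=4|\na\sqrt{n_\al}|^2$ to expand the integrand of $\mathcal D$, I would isolate the clean term $4\sum_\al\int_0^T\int|\na\sqrt{n_\al^\ep}|^2$ by absorbing the cross term and bounding $\sum_\al\int_0^T\int n_\al^\ep|\na c_\al^\ep|^2$ through Hardy--Littlewood--Sobolev and Gagliardo--Nirenberg--Sobolev inequalities in terms of the entropy-controlled norms. This produces a uniform bound on $\sqrt{n_\al^\ep}$ in $L^2_tH^1_x$, and by GNS interpolation a uniform bound on $n_\al^\ep$ in $L^2_tL^2_x$. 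Writing $\pa_t n_\al^\ep=\de n_\al^\ep-\na\cdot(n_\al^\ep\na c_\al^\ep)$ and using $|\na K^\ep|_\infty<\infty$, I would bound $\pa_t n_\al^\ep$ in $L^1_tH^{-s}_x$ for a suitable $s>0$. The Aubin--Lions--Simon lemma then extracts a subsequence with $n_\al^\ep\to n_\al$ strongly in $L^2_tL^2_x$.

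With strong $L^2$ convergence in hand I would pass to the limit in the weak formulation. As $\ep\to0$ the kernel satisfies $\na K^\ep\to\na K$, and Riesz-potential / HLS estimates make $\rho\mapsto\na K^\ep*\rho$ uniformly bounded from $L^p$ to $L^q$, so $\na c_\al^\ep=\na K^\ep*\big(\sum_\beta b_{\al\beta}n_\beta^\ep\big)$ converges to $\na c_\al=\na K*\big(\sum_\beta b_{\al\beta}n_\beta\big)$ and the drift $n_\al^\ep\na c_\al^\ep\to n_\al\na c_\al$ in $L^1_{t,x}$. Nonnegativity survives the strong limit, so $(n_\al)_{\al\in\mathcal I}$ solves \eqref{EQ:KS_multiple_groups} distributionally on $[0,T]$; finally, lower semicontinuity of the convex entropy and of the dissipation under these convergences recovers the dissipation inequality \eqref{free_energy_dissipation}, certifying that the limit is a free-energy solution.

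The hard part will be the strong compactness step. The dissipation controls only the combination $n_\al|\na\log n_\al-\na c_\al|^2$, so disentangling the good gradient term $|\na\sqrt{n_\al^\ep}|^2$ from the cross term and the drift energy $n_\al^\ep|\na c_\al^\ep|^2$, and closing the resulting estimate without circularly invoking bounds one has not yet established --- all while simultaneously controlling the kernel regularization as $\ep\to0$ --- is the technical crux.
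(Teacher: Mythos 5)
Your overall skeleton (uniform a priori bounds, Aubin--Lions compactness, passage to the limit in the weak formulation and in the dissipation inequality) matches the paper's, but there is a genuine gap at exactly the step you flag as the crux: the strong $L_t^2L_x^2$ compactness. The Aubin--Lions lemma (as quoted in the paper) requires the family to be bounded in $L^p([0,T];V)$ with $V$ \emph{compactly} embedded in $H=L^2_x$, in addition to the time-derivative bound; boundedness of $n_\al^\ep$ in $L^2_tL^2_x$ itself --- which is all your dissipation-based estimates deliver --- is only the $H$-space bound and carries no spatial compactness. From $\sqrt{n_\al^\ep}\in L^2_tH^1_x$ and conservation of mass one can only conclude $\na n_\al^\ep=2\sqrt{n_\al^\ep}\,\na\sqrt{n_\al^\ep}\in L^2_tL^1_x$, and the embedding $W^{1,1}(\rr^2)\hookrightarrow L^2(\rr^2)$ is critical, hence not compact; so no strongly $L^2_tL^2_x$-convergent subsequence follows from what you have established. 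This is precisely why the paper proves two further lemmas before invoking Aubin--Lions: an $L^p$ propagation estimate and, crucially, a \emph{hypercontractivity} lemma showing that, although the initial data is only in $L\log L$, the regularized solutions lie in $L^\infty_t([\delta,T];L^p_x)$ for every $\delta>0$ and every $p<\infty$. Combined with the HLS bound $|\na c_\al^\ep|_4\lesssim\sum_\beta|b_{\al\beta}|\,|n_\beta^\ep|_{4/3}$ and a plain $L^2$ energy estimate, this yields $\na n_\al^\ep\in L^2_t([\delta,T];L^2_x)$ uniformly in $\ep$, i.e.\ a bound in $V=H^1\cap\{f\,|\int f|x|^2dx<\infty\}$, which \emph{is} compactly embedded in $L^2$; compactness is then obtained only on $[\delta,T]$, and a diagonal argument in $\delta$ (together with monotone convergence) finishes. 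Any repair of your argument must reproduce this $L^p$-smoothing step or an equivalent, so the gap is not cosmetic.

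Two secondary points. First, your bound on $\pa_t n_\al^\ep$ invokes $|\na K^\ep|_\infty<\infty$, but this quantity blows up as $\ep\to0$, so it cannot give a bound uniform over the family; a uniform bound must instead pass through dissipation-controlled quantities, e.g.\ $|n_\al^\ep\na c_\al^\ep|_{L^2_tL^1_x}\le M_\al^{1/2}|\sqrt{n_\al^\ep}\,\na c_\al^\ep|_{L^2_{t,x}}$. Second, your claim that $\int_0^T\mathcal{D}[\mathbf{n}^\ep]\,ds$ is uniformly bounded requires a uniform \emph{lower} bound on $E[\mathbf{n}^\ep(T)]$, i.e.\ an upper bound on the short-range interaction $\iint_{|x-y|\le1}n_\al^\ep\log\tfrac{1}{|x-y|}\,n_\beta^\ep\,dxdy$; this can be extracted from the entropy hypothesis and the second moment via the Young inequality $ab\le e^{a-1}+b\log b$ (the same device the paper uses to bound $c_\al^\ep$ pointwise), but it does not follow from the dissipation identity alone. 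Note that the paper sidesteps this issue entirely by never bounding the full dissipation at once: it derives the two pieces separately, $\int_0^T\!\!\int|\na\sqrt{n_\al^\ep}|^2$ from the entropy evolution (with the quadratic term $\sum b_{\al\beta}\int n_\al^\ep n_\beta^\ep$ absorbed by a level-set truncation and GNS, using the entropy bound to make $\eta(K)$ small), and $\int_0^T\!\!\int n_\al^\ep|\na c_\al^\ep|^2$ from the evolution of $\sum_\al\int n_\al^\ep c_\al^\ep\,dx$.
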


\begin{pro}\label{pro 1.2 multiple group}(Blow-up Criterion of Free-energy Solutions) Consider the multi-species PKS system \eqref{EQ:KS_multiple_groups} subject to initial condition \eqref{General_condition_for_initial_data}. There exists a maximal existence time $T^*>0$ for the free-energy solution to the system \eqref{EQ:KS_multiple_groups}. Moreover, if $T^*<\infty$, then there exists an $\al\in \mathcal{I}$ such that
\be
\lim_{t\rightarrow T^*}\int_{\rr^2}n_\al(t)\log n_\al(t) dx=\infty.
\ee
\end{pro}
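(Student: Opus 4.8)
The plan is to combine the local-existence machinery of Proposition \ref{pro 2.1 multiple group} with a short-time a~priori entropy estimate that is \emph{uniform in the regularization parameter} $\ep$, and then to promote local existence to a maximal interval by a continuation argument. First I would show that the regularized entropies $\sum_\al S[n_\al^\ep]$ stay bounded on an interval $[0,T_0]$ whose length $T_0$ depends only on the initial entropy, mass and second moment (and not on $\ep$); feeding this into Proposition \ref{pro 2.1 multiple group} produces a free-energy solution on $[0,T_0]$, so the set of existence times is nonempty. Defining $T^\star$ as the supremum of all $T$ for which a free-energy solution with finite entropy exists on $[0,T]$, the content of the proposition is then that $T^\star<\infty$ forces the entropy to diverge.

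For the short-time bound I would differentiate the entropy along the regularized flow \eqref{EQ:KS_multiple_groups_regularized}. Writing the equation as $\partial_t n_\al^\ep=\na\cdot(n_\al^\ep(\na\log n_\al^\ep-\na c_\al^\ep))$ and integrating by parts,
\[
\frac{d}{dt}\sum_\al S[n_\al^\ep]=-4\sum_\al\int|\na\sqrt{n_\al^\ep}|^2\,dx+\sum_{\al,\beta}b_{\al\beta}\int n_\al^\ep n_\beta^\ep\,dx+\mathcal{E}_\ep,
\]
where $\mathcal{E}_\ep$ collects the error coming from $-\de K^\ep\neq\delta_0$ and is lower order. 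The decisive step is to dominate the quadratic interaction without using up the full dissipation: splitting $n=n\mathbf 1_{\{n\le\lambda\}}+n\mathbf 1_{\{n>\lambda\}}$, the low part contributes $\le\lambda M$, while the high part carries mass $\le(\log\lambda)^{-1}\int n\log^+n$, so the two-dimensional Gagliardo--Nirenberg inequality $\int g^2\le C_{GNS}|g|_1\,|\na\sqrt g|_2^2$ applied to the high-density part yields, for every $\eta>0$,
\[
\int n^2\,dx\le \eta\int|\na\sqrt n|^2\,dx+C_\eta\exp\Big(C\textstyle\int n\log^+n\,dx\Big).
\]
Using $\int n_\al n_\beta\le\frac12(\int n_\al^2+\int n_\beta^2)$ and choosing $\eta$ small enough to absorb the gradient terms into $-4\sum_\al\int|\na\sqrt{n_\al^\ep}|^2$, and bounding $\int n_\al^\ep\log^+n_\al^\ep$ by the entropy up to the (linearly growing, hence locally bounded) second moment, I obtain a closed differential inequality $H'(t)\le C\,e^{CH(t)}$ for $H:=\sum_\al S[n_\al^\ep]$. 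This keeps $H$ finite on $[0,T_0]$ with $T_0$ depending only on $H(0)$, and $H(0)$ is bounded uniformly in $\ep$ since $n_{\al0}^\ep=\min\{n_{\al0},\ep^{-1}\}\le n_{\al0}$. This is exactly the uniform bound required by Proposition \ref{pro 2.1 multiple group}.

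The maximal time and the blow-up criterion then follow by continuation. Suppose, to the contrary, that $T^\star<\infty$ but $\displaystyle\liminf_{t\to T^\star}\sum_\al S[n_\al(t)]<\infty$, and pick $t_0<T^\star$ with $\sum_\al S[n_\al(t_0)]$ bounded. Because the second moment grows only linearly, $\mathbf n(t_0)\in\Gamma_{\mathbf M}(\rr^2)$ satisfies \eqref{General_condition_for_initial_data}, so I may restart the construction from $t_0$; the short-time estimate gives a free-energy solution on $[t_0,t_0+\delta]$ with $\delta=T_0\big(\sum_\al S[n_\al(t_0)]\big)$ bounded below uniformly in $t_0$. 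Concatenating this with the solution on $[0,t_0]$ — the free-energy dissipation inequality \eqref{free_energy_dissipation} being additive across $t_0$ — yields a free-energy solution on $[0,t_0+\delta]$ with $t_0+\delta>T^\star$, contradicting maximality. Hence $\liminf_{t\to T^\star}\sum_\al S[n_\al(t)]=\infty$, and since $|\mathcal{I}|<\infty$ at least one species' entropy diverges.

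I expect the main obstacle to be the superlinear entropy estimate — controlling $\int n^2$ by an arbitrarily small multiple of $\int|\na\sqrt n|^2$ plus an exponential of the entropy — because this is precisely what lets the argument survive in the supercritical regime, where the sharp Gagliardo--Nirenberg constant alone cannot absorb the interaction term. Making the density truncation rigorous at the level of $\sqrt n$ (so that the inequality genuinely applies to the high-density part) and verifying that the regularization error $\mathcal E_\ep$ is negligible are the technical points requiring the most care; a secondary subtlety is the consistency of gluing free-energy solutions and the semicontinuity of the entropy needed to compare the limit solution with its regularized approximants.
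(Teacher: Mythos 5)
Your proposal is correct and follows essentially the same route as the paper: both close the entropy evolution for the regularized system by splitting the quadratic interaction at a density level tied to the current entropy, using the superlevel-set mass bound $\eta(K)\sim S_+/\log K$ together with Gagliardo--Nirenberg to absorb the high-density part into the dissipation $-4\sum_\al\int|\na\sqrt{n_\al^\ep}|^2dx$, and then invoke Proposition \ref{pro 2.1 multiple group} plus a continuation/contradiction argument at the maximal time. The only differences are bookkeeping: the paper fixes a large $K$ at $T_\star$ and propagates the uniform-in-$\ep$ entropy bound a short time $\tau$ past $T_\star$ by continuity, whereas you let the truncation level depend on the entropy to get the closed inequality $H'\le Ce^{CH}$ and restart the construction at a time $t_0<T^\star$ where the entropy is finite.
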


\begin{proof}[Proof of proposition \ref{pro 2.1 multiple group}]
The proof is divided into three main steps.

\medskip\noindent
\textbf{$\bullet$ STEP \#1}. Here we prove A priori estimates on mass distribution $n^\ep$ and chemical distribution $c_\al^\ep$ to prepare for the latter steps. For the readers' convenience, we summarize the uniform in $\ep$ estimates we obtained in this step:
\begin{subequations}
\begin{align}
\sum_\al|(1+|x|^2)n_\al^\ep&|_{L_t^\infty(0,T;L_x^1)}\leq C_V(\{(V_\al)_0\}_{\al\in\mathcal{I}},\mathbf{M})<\infty;\label{A_priori_est_of_n_c_al_ep_1}\\
\sum_\al|n_\al^\ep\log^\ep n_\al^\ep & |_{L_t^\infty(0,T; L^1_x)}\leq C(C_{L\log L}, C_V)<\infty;\label{A_priori_est_of_n_c_al_ep_2}\\
\sum_\al|\na \sqrt{ n_\al}|& _{L^2_t(0,T;L^2_x)}^2\leq C(C_{L\log L}, C_V)<\infty;\label{A_priori_est_of_n_c_al_ep_3}\\
\sum_{\al}|\sqrt{n_\al}\na c_\al|
&_{L^2_t(0,T;L^2_x)}^2\leq C(C_{L\log L}, C_V)<\infty.\label{A_priori_est_of_n_c_al_ep_4}
\end{align}
\end{subequations}
Before proving these estimates, we recall the following Gagliardo-Nirenberg-Sobolev inequality, which is applied several times in the sequel:
\begin{align}
|u|_{L^p}^2\leq C_{GNS}|\na u|_{L^2}^{2-4/p}|u|_{L^2}^{4/p}, \forall u\in H^1, \forall p\in [2,\infty).\label{GNS_Lp}
\end{align}

We start by proving the second moment control of the solutions \eqref{A_priori_est_of_n_c_al_ep_1}. 
Similar to the calculation in the proof of Lemma \ref{Second_moment_control}, we have the following:
\bel
\frac{d}{dt}\bigg(\sum_\al \int n_\al|x|^2dx\bigg) \leq 4\sum_\al M_\al+\sum_{\al,\beta}(b_{\al\beta})_-\frac{M_\al M_\beta}{2\pi},\label{Second_moment_control}
\eel
from which the estimate \eqref{A_priori_est_of_n_c_al_ep_1} follows directly.

To prove the $L^1$ control of $n_\al^\ep\log n^\ep_\al$ \eqref{A_priori_est_of_n_c_al_ep_2}, we recall the following lemma.
\begin{lem}
For any $g$ such that $(1+|x|^2)g\in L^1_+(\rr^2)$, we have $g\log^- g\in L^1(\rr^2)$ and
\bel
\int_{\rr^2}g\log^-g dx\leq \frac{1}{2}\int_{\rr^2}g(x)|x|^2dx+\log(2\pi)\int_{\rr^2}g(x)dx+\frac{1}{e}.\label{negative_part_of_entropy}
\eel
\end{lem}
\begin{proof}
The proof of the lemma can be found in the paper \cite{BlanchetEJDE06} and \cite{BlanchetCarrilloMasmoudi08}. We refer the interested readers to these papers for further details.
\end{proof}
The estimate \eqref{negative_part_of_entropy} yields that
\begin{align*}
\int |n_\al^\ep\log n_\al^\ep | dx & \leq \int n_\al^\ep(\log n_\al^\ep +|x|^2)dx+2\log(2\pi)M_\al+\frac{2}{e}\\
& \leq C_{L\log L}+C_V+2\log(2\pi)M_\al+\frac{2}{e}.
\end{align*}
As a result, we prove \eqref{A_priori_est_of_n_c_al_ep_2}.

Next we show the bound of $|\na \sqrt{n_\al}|^2_{L_t^2(0,T;L^2_x)}$ \eqref{A_priori_est_of_n_c_al_ep_3}. This term naturally arises when we calculate the time evolution of the entropy $\sum_\al S[n_\al]$:
\begin{align}\label{S_n_al_time_evolution}
\frac{d}{dt}\sum_{\al\in \mathcal{I}} S[n_\al]=&-4\sum_{\al\in\mathcal{I}}\int |\na\sqrt{n_\al }|^2dx+\sum_{\al,\beta\in \mathcal{I}} b_{\al\beta}\int n_\al n_{\beta}dx.
\end{align}
If we integrate \eqref{S_n_al_time_evolution}, the quantity $\sum_\al|\na \sqrt{n_\al}|^2_{L_t^2(0,T;L^2_x)}$ will appear on the right hand side. Therefore, we need to estimate the other terms in \eqref{S_n_al_time_evolution}. Before going into the detailed estimates of the second term on the right hand side of \eqref{S_n_al_time_evolution}, we recall that the total mass in the superlevel set can be estimated in terms of the entropy bound $C_{L\log L}$
\begin{align}\label{eta(K)}
\sum_{\al\in \mathcal{I}} \int_{n_\al\geq K}n_\al dx\leq \frac{1}{\log(K)} \sum_{\al\in\mathcal{I}}\int |n_\al\log n_\al |dx \leq \frac{C_{L\log L}}{\log(K)}=:\eta(K).
\end{align}
If $K$ is chosen large compared to the bound $C_{L\log L}$, the constant $\eta(K)$ will be small. It is classical to use this fact to control the nonlinearity in the PKS equation. Now the second term on the right hand side of \eqref{S_n_al_time_evolution} can be estimated using H\"older's inequality, Gagliardo-Nirenberg-Sobolev inequality and Young's inequality as follows:
\begin{align}
\sum_{\al,\beta} b_{\al\beta}&\int n_\al n_{\beta}dx\nonumber\\
 \leq & \max_{\al,\beta} |b_{\al\beta}|\sum_\al |n_\al|_2 \sum_\beta| n_\beta|_2 \nonumber\\
\leq&\max_{\al,\beta} |b_{\al\beta}|\left(\sum_\al |n_\al \mathbf{1}_{n_\al\geq K}|_2+\sum_\al M_\al^{1/2} K^{1/2}\right)^{2} \label{S_n_al_time_evolution_1}\\
\leq&2\max_{\al,\beta}|b_{\al\beta}|\left(\sum_\al |n_\al \mathbf{1}_{n_\al\geq K}|_1^{1/4}|n_{\al}|_3^{3/4}\right)^2+2\max_{\al,\beta}|b_{\al\beta}|\mathcal{I}|K\sum_\al M_\al \nonumber\\
\leq&\eta(K)^{1/2}C_{GNS}\max_{\al,\beta}|b_{\al\beta}|\bigg(\sum_\al M_\al^{1/2}\bigg)\left(\sum_\al|\na\sqrt{n_{\al}}|_2^{2}\right)+2\max_{\al,\beta}|b_{\al\beta}|\mathcal{I}|K\sum_\al M_\al.\nonumber
\end{align}
Combining \eqref{S_n_al_time_evolution} and \eqref{S_n_al_time_evolution_1}, we have the following estimate on the time evolution of $\displaystyle{\sum_\al S[n_\al]}$:
\begin{align*}
\frac{d}{dt}&\sum_{\al} S[n_\al]\\
 & \leq -\sum_\al\left(4-\eta(K)^{1/2}C_{GNS}\max_{\al,\beta}|b_{\al\beta}|\bigg(\sum_\al M_\al^{1/2}\bigg)\right) |\na \sqrt{n_\al}|_2^2 + 2\max_{\al,\beta} |b_{\al\beta}|\cdot |\mathcal{I}|K\sum_{\al}M_\al.
\end{align*}
The coefficient $-(4-\eta(K)^{1/2}C_{GNS}\max_{\al,\beta}|b_{\al\beta}|(\sum_\al M_\al^{1/2}))$ is negative for $K$ large enough. Therefore, for large enough $K$, we have the following estimate:
\begin{align}
\sum_\al\int_0^T\int|\na\sqrt{n_\al}|^2 dxdt\leq& \frac{\displaystyle S[\mathbf{n}(0)]-S[\mathbf{n}(T)]+2\max_{\al,\beta} |b_{\al\beta}|\cdot |\mathcal{I}|K\sum_\al M_\al  T}{\displaystyle 4-\eta(K)^{1/2}C_{GNS}\max_{\al,\beta}|b_{\al\beta}|(\sum_\al M_\al^{1/2})}<\infty.
\end{align}
Since the entropy $S[\mathbf{n}(T)]$ is bounded, the right hand side is bounded. This completes the proof of \eqref{A_priori_est_of_n_c_al_ep_3}. 

Finally, we prove the estimate \eqref{A_priori_est_of_n_c_al_ep_4}. The term $|\sqrt{n_\al^\ep}\na c_\al^\ep|_2^2$ naturally arises when we calculate the time evolution of $\displaystyle{\sum_\al\int n_\al^\ep c_\al^\ep dx}$
\begin{align*}
\frac{1}{2}\frac{d}{dt}\sum_\al \int n_\al^\ep c_{\al}^\ep dx
=&\sum_\al \int n_\al^\ep \de c_{\al}^\ep +\sum_{\al}\int n_\al^\ep |\na c_{\al}^\ep|^2dx.
\end{align*}
Integration in time yields that
\begin{align}
\sum_\al\int_0^T\int n^\ep_\al |\na c_\al^\ep|^2dxdt=\frac{1}{2}\int n_\al^\ep(T)c_\al^\ep(T)-\frac{1}{2}\int n_\al^\ep (0)c_\al^{\ep}(0)dx-\sum_\al\int_0^T\int n_\al^\ep \de c_\al^\ep dxdt\label{A_priori_4_1}.
\end{align}
We first estimate the first term on the right hand side of \eqref{A_priori_4_1}. Applying the estimate of $|n_\al^\ep\log n_\al^\ep|_{L_t^\infty(0,T;L_x^1)}$ \eqref{A_priori_est_of_n_c_al_ep_2}, the relation  $c_\al^\ep =\sum_\beta b_{ \al\beta} K^\ep* n_\beta^\ep $ and the Young's inequality $\displaystyle{ab\leq e^{a-1}+b\ln b,\enskip \forall a,b\geq 1}$, we deduce that
\begin{align*}
|c^\ep_\al(x)|\leq&\frac{1}{2\pi}\sum_{\beta\in \mathcal{I}}|b_{\al\beta}| \int_{|x-y|\leq 1} |K^\ep(|x-y|) n_\beta(y)|dy+\frac{1}{2\pi}\sum_{\beta\in \mathcal{I}}|b_{\al\beta}|\int_{|x-y|\geq 1}|K^\ep (|x-y|)n_\beta(y)|dy\\
\leq & \sum_{\beta\in \mathcal{I}}|b_{\al\beta}|\int_{|x-y|\leq 1}\left((1+n_\beta(y))\log (1+n_\beta(y))+\frac{1}{|x-y|}\right)dy\\
& +\sum_{\beta\in \mathcal{I}}|b_{\al\beta}|\int (\log(1+|x|)+\log(1+|y|))n_\beta(y)dy\\
\lesssim &\sum_{\beta\in \mathcal{I}}|b_{\al\beta}|(C_{L\log L}+M_\beta+1+V_\beta+M_\beta \log( 1+|x|)).
\end{align*}
Combining it with the second moment control \eqref{A_priori_est_of_n_c_al_ep_1}, we have that $\int n_\al c_\al(t)$ is bounded independent of $\ep$ on time interval $[0,T]$:
\begin{align}
\int n_\al  c_{\al} dx\lesssim\sum_{\beta\in \mathcal{I}}|b_{\al\beta}|(C_{L\log L}+M_\beta+1+V_\beta)M_\al+\sum_{\beta\in\mathcal{I}}|b_{\al\beta}|M_\beta V_\al<\infty.\label{A_priori_4_2}
\end{align}
The last term on the right hand side of \eqref{A_priori_4_1} can be estimated using the $L^2([0,T]\times\rr^2)$ estimate of $\na \sqrt{n_\al ^\ep}$ \eqref{A_priori_est_of_n_c_al_ep_3} and the relation
\begin{align*}
\frac{d}{dt}\sum_\al S[n_\al^\ep(t)]=-4\sum_\al\int |\na\sqrt{n_\al^\ep}|^2dx+\sum_{\al\in \mathcal{I}}\int n_\al ^\ep (-\de c_{\al}^\ep)dx.
\end{align*}
Time integration of this relation yields that
\begin{align*}
\bigg|\sum_{\al\in \mathcal{I}}\int_0^T\int n_\al^\ep (-\de c_{\al}^\ep)dxdt\bigg|&=\bigg|\sum_\al S[{n_\al}^\ep(T)]-\sum_\al S[{n_\al}^\ep(0)]+4\sum_\al\int_0^T\int |\na\sqrt{n_\al^\ep}|^2dxdt\bigg|\\ 
& \leq C(C_{L\log L})<\infty.
\end{align*}
Combining this estimate, \eqref{A_priori_4_1} and \eqref{A_priori_4_2}, we completed the proof of \eqref{A_priori_est_of_n_c_al_ep_4}.
In this way, we obtained estimates on the two terms appearing in the dissipation of the free energy.

\medskip\noindent
\textbf{$\bullet$ STEP \#2}. Passing to the limit in $L_t^2(\delta,T; L^2)$ for any $\delta>0$. Here we would like to use the Aubin-Lions compactness lemma:
\begin{lem}[Aubin-Lions lemma, \cite{BlanchetCarrilloMasmoudi08}]
Take $T> 0$ and $1<p<\infty$. Assume that $(f_n)_{n\in \mathbb{N}}$ is a bounded sequence of functions in $L^p([0,T]; H)$ where $H$ is a Banach space. If $(f_n)_{n\in\mathbb{N}}$ is also bounded in $L^p([0,T];V)$ where $V$ is compactly embedded in $H$ and $(\pa f_n/\pa_t)_{n\in\mathbb{N}} \subset L^p([0,T];W)$ uniformly with respect to $n\in \mathbb{N}$ where $H$ is imbedded in $W$, then $(f_n)_{n\in \mathbb{N}}$ is relatively compact in $L^p([0, T]; H)$.
\end{lem}
Our goal is to find the appropriate spaces $V,H,W$ for $(n_\al^\ep)_{\ep>0}$. We subdivide the proof into steps, each step determines one space in the lemma. We will  show that the following estimates are satisfied by the regularized solutions with the constant $C_{L_t^2H_x^1}$ independent of the regularized parameter $\ep$:
\begin{align*}
&|n_\al ^\ep|_{ L_t^2([\delta,T], L_x^2)}\leq C_{L_t^2H_x^1}<\infty,\\
&|\na n_\al^\ep|_{ L_t^2([\delta,T], L_x^2)}\leq C_{L_t^2H_x^1}<\infty, \quad \forall\al \in \mathcal{I}.
\end{align*}

We begin with the $H\!=\!L^2$- estimate of $\sum_\al|n_\al^\ep|_{L^2_t([\delta, T];L_x^2)}^2$.
Here we prove that the solutions ${n}_\al^\ep(t),\enskip \forall \al \in\mc I$ are $L^2$ integrable in space for $\forall t\in[\delta,T]$. If the initial data $n_{\al 0}$ is $L^2$ integrable for all $\al$, the solutions to the regularized equation \eqref{EQ:KS_multiple_groups_regularized} stay in $L^2$ for all time. This is the content of Lemma \ref{L_p_bound_n_al}. However, the initial constraint \eqref{General_condition_for_initial_data} does not guarantee $L^p$ boundedness, so we prove the hypercontractivity property of the equation \eqref{EQ:KS_multiple_groups}, which yields that the solutions become $L^2$ integrable after an arbitrarily small amount of time $\delta>0$. This is the content of Lemma \ref{lem:Hypercontractivity_estimate}.
\begin{lem}\label{L_p_bound_n_al}
Consider the regularized multi-species PKS system \eqref{EQ:KS_multiple_groups_regularized} subject to initial condition $n_{\al 0}\in L^p$,  $\forall \al\in \mathcal{I},$ $\forall p\in [1, \infty)$. If the assumptions in the Proposition \ref{pro 2.1 multiple group} are satisfied, then the solutions to the system \eqref{EQ:KS_multiple_groups_regularized} are bounded in $L^p$ for $\forall t\in[0,T]$.
\end{lem}
\begin{proof}
The $p=1$ case is equivalent to the fact that the regularized equations preserve mass.

We do the $L^p$ energy estimate formally, i.e., we assume $-\de c_\al=\sum_\beta b_{\al\beta} n_\beta$,  and refer the interested readers to the paper  \cite{BlanchetEJDE06} for detailed justifications. During the calculation, we will use the following natural implication of the GNS inequality
\begin{align}
\int (f-K)_+^{p+1}dx&\leq C_{GNS} \int (f-K)_+dx\int |\na (f-K)_+^{p/2}|^2dx\nonumber \\
 & \leq  C_{GNS} \frac{|f\log f|_1}{\log K} \int |\na (f-K)_+^{p/2}|^2dx =:C_{GNS} \eta(K) \int |\na (f-K)_+^{p/2}|^2dx.\label{f-K_+_GNS}
\end{align}
Note that if $|f\log f|_1$ is bounded, 
$\eta (K)$ is small if one choose $K$ large. Now we estimate the time evolution of $\sum_\al |(n_\al-K)_+|_p^p$ with \eqref{f-K_+_GNS} as follows
\begin{align*}
\frac{1}{p}\sum_{\al}&\frac{d}{dt}\int (n_\al -K)_+^pdx\\
=&-4\frac{p-1}{p^2}\sum_\al\int |\na (n_\al -K)_+^{p/2}|^2dx-\sum_{\al}\frac{1}{p}\int \na c_{\al}\cdot\na (n_\al -K)_+^pdx\\
 & -\sum_{\al} \int\de c_{\al}  n_\al (n_\al -K)_+^{p-1}dx\\
\leq&-4\frac{p-1}{p^2}\sum_\al\int |\na (n_\al -K)_+^{p/2}|^2dx+\frac{p+1}{p}\sum_{\al,\beta}|b_{\al\beta}|\int (n_\al -K)_+^p(n_{\beta}-K)_+dx\\
&+\frac{p+1}{p}K\sum_{\al,\beta}|b_{\al\beta}|\int (n_\al-K)_+^pdx+K\sum_{\al,\beta}|b_{\al\beta}|\int(n_\beta-K)_+(n_\al-K)_+^{p-1}dx\\&+K^2\sum_{\al,\beta}\int |b_{\al\beta}|(n_\al-K)_+^{p-1}dx
\end{align*}
and hence we find
\begin{align*}
\frac{1}{p}\sum_{\al}&\frac{d}{dt}\int (n_\al -K)_+^pdx\\
\leq&-4\frac{p-1}{p^2}\sum_\al\int |\na (n_\al-K)_+^{p/2}|^2dx\\
 &+\max_{\al}\left(\sum_\beta |b_{\al\beta}|\right)C_{GNS}\sum_\al |(n_\al-K)_+|_1|\na(n_\al-K)_+^{p/2}|_2^{2}\\
&+C_p(K,\mathbf{B},\mathbf{M})|(n_\al-K)_+|_p^p+C_p(K,\mathbf{B},\mathbf{M})\\
\leq&\left(-\frac{4(p-1)}{p^2}+\eta(K)\max_\al\left( \sum_\beta|b_{\al\beta}|\right)C_{GNS}\right)\sum_\al \int |\na (n_\al-K)_+^{p/2}|^2dx\\
&+C_p(K,\mathbf{B},\mathbf{M})\sum_\al|(n_\al-K)_+|_p^p+C_p(K,\mathbf{B},\mathbf{M}).
\end{align*}
Due to the estimates \eqref{A_priori_est_of_n_c_al_ep_2} and \eqref{eta(K)}, the constant $\eta(K)$ can be made small enough such that the leading order term is negative, and the estimate can be further simplified as follows:
\begin{equation}\label{(n-K)_+_Lp_time_evolution}
\frac{d}{dt}\sum_{\al}|(n_\al -K)_+|_p^p\leq C_p(K,\mathbf{B},\mathbf{M})\sum_\al|(n_\al-K)_+|_p^p+C_p(K,\mathbf{B},\mathbf{M}).
\end{equation}
Now we see that for any finite time interval $[0, T]$, the $L^p$ norm is bounded uniformly independent of $\ep$.
\end{proof}

\begin{lem}\label{lem:Hypercontractivity_estimate}
Consider the regularized multi-species PKS system \eqref{EQ:KS_multiple_groups_regularized} subject to initial data $\mathbf{n}_0$ satisfying \eqref{General_condition_for_initial_data}. If the assumptions in Proposition \ref{pro 2.1 multiple group} is satisfied, then there exists a continuous function $h_p\in C(\rr_+)$ such that for almost any $t>0$, $|n(\cdot ,t)|_p\leq h_p(t)$.
\end{lem}
\begin{proof}
The proof is similar to the corresponding proof in \cite{BlanchetEJDE06} with some modifications. For the sake of completeness, we sketch the proof. First, we fix $t>0$ and $1<p<\infty$, and define \begin{equation}
q(s):=1+(p-1)\frac{s}{t},\quad q\in[1,p] \text{ for }s\in[0,t].
\end{equation}
Next, we define the following quantities:
\begin{align}
\mathbb{F}_\al(s)=&\bigg(\int_{\rr^2}(n_\al(x,s)-K)_+^{q(s)}dx\bigg)^{1/q(s)},\\
\mathbb{F}(s)=&\bigg(\sum_\al \mathbb{F}_\al^{q(s)}(s)\bigg)^{1/q(s)}.
\end{align}
By taking the $s$ derivative of the function $\mathbb{F}^{q(s)}(s)$, we obtain the following relation
\begin{align*}
\frac{d}{ds}\sum_\al\int(n_\al(x,s)-K)_+^{q(s)}dx=q(s)\mathbb{F}^{q(s)-1}\frac{d}{ds}\mathbb{F}+\frac{dq(s)/ds}{q(s)}\mathbb{F}^{q(s)}\log \mathbb{F}^{q(s)}.
\end{align*}
Combining it with the log-Sobolev inequality
\begin{align*}
\int f^2\log\left(\frac{f^2}{\int f^2 dx}\right)dx\leq 2\sigma\int|\na f|^2dx-(2+\log(2\pi \sigma))\int f^2 dx,\quad \forall \sigma>0,
\end{align*}
and the same argument to prove \eqref{(n-K)_+_Lp_time_evolution}, we end up with the following estimate, inside which the notation $(\cdot)'$ is used to represent $\frac{d}{ds}$,
\begin{align}
\mathbb{F}^{q-1}\frac{d}{dt}\mathbb{F}=&\frac{q'}{q^2}\sum_\al \int (n_\al-K)_+^{q}\log \frac{(n_\al-K)_+^q}{\mathbb{F}^q}dx+\sum_\al\int(n_\al-K)_+^{q-1}\pa_s n_\al dx\nonumber\\
\leq&\frac{q'}{q^2}\sum_\al \int (n_\al-K)_+^{q}\log \frac{(n_\al-K)_+^q}{\mathbb{F}_\al^q}dx+\sum_\al\int(n_\al-K)_+^{q-1}\pa_s n_\al dx\nonumber\\
\leq&\sum_\al\bigg(\frac{2\sigma q'}{q^2}-4\frac{q-1}{q^2}+C(\mathbf{B})\eta(K)\bigg)|\na(n_\al-K)_+^{q/2}|_2^2\nonumber\\
&+\sum_\al \left((-2-\log(2\pi\sigma))\frac{q'}{q^2}+C(q,\mathbf{B}, \mathbf{M},K)\right)\int(n_\al-K)^q_+dx+C(q,\mathbf{B},\mathbf{M},K).
\end{align}
Here the constants $C(q,\mathbf{B},\mathbf{M},K)$ depends on the parameter $q$. However, since $q$ is lying in a compact set $[0,p]$ on the time interval $[0,t]$, it can be chosen such that it only depends on the fixed parameter $p$. Now by taking $\sigma$ small enough, we end up with the following differential inequality
\begin{align*}
\mathbb{F}^{q-1}\frac{d}{ds}\mathbb{F}\leq \left((-2-\log(2\pi\sigma))\frac{q'}{q^2}+C(p,\mathbf{B}, \mathbf{M},K)\right)\mathbb{F}^q+C(p,\mathbf{B},\mathbf{M},K).
\end{align*}
Combining the fact that $\mathbb{F}(0)$ is finite and the coefficient $(-2-\log(2\pi\sigma))\frac{q'}{q^2}+C(p,\mathbf{B}, \mathbf{M},K)$ is time integrable on $[0,t]$  and applying standard ODE estimates, we obtain that $\mathbb{F}\leq h_p(t)$. This finishes the proof of the lemma. 
\end{proof}

We now turn to the $V$-space estimates, where $V:=H^1\cap\{f|\int f|x|^2 dx<\infty\}$: $\sum_\al|\na n_\al^\ep |^2_{L_t^2([\delta, T];L^2_x)}$.
In order to get the $L_t^2([\delta,T]; L_x^2)$ control of the $\na n_\al^\ep$, we first calculate the time evolution of $\sum|n_\al ^\ep|_2^2$:
\be\ba
\frac{d}{dt}\sum_\al\int |n_\al ^\ep|^2dx
=&-2\sum_\al\int |\na n_\al ^\ep|^2dx+2\sum_{\al}\int \na n_\al ^\ep \cdot \na c_{\al}^\ep n_\al^\ep dx.
\ea\ee
Integration in time yields that
\begin{align}
\sum_\al|n_\al ^\ep(T)|_2^2-\sum_\al|n_\al ^\ep(\delta)|_2^2
+\sum_\al| \na n_\al ^\ep|^2_{L^2_t([\delta,T];L^2_x)}\leq&
\sum_\al| n_\al ^\ep\na c_{\al}^\ep|_{L_t^2([\delta,T];L_x^2)}^2.\label{n_ep_al_L2}
\end{align}
We see that since $|n_\al^\ep|_{L^{\infty}_t(\delta,T;L_x^2)}$ is bounded independent of $\ep$, if the right hand side $\sum_\al|n^\ep_\al \na c^\ep_\al|_{L_t^2([\delta,T];L_x^2)}$ is bounded, $|\na n_\al^\ep|_{L^2_t(\delta,T;L_x^2)}$ will be bounded independent of $\ep$. By the HLS inequality, we have that
\be
|\na c_\al ^\ep|_4\leq C_{HLS}\sum_{\beta\in \mathcal{I}}|b_{\al\beta}|\cdot|n_\beta ^\ep |_{4/3}.
\ee
As a result, we have that
\be
|n_\al ^\ep \na c_{\al}^\ep |_2\leq |n_\al ^\ep |_{4}|\na c_{\al}^\ep |_4\leq \sum_{\beta} C_{HLS}|b_{\al\beta}|\cdot|n_\al ^\ep|_{4}|n_{\beta}^\ep|_{4/3}.
\ee
Since $n_\al^\ep$ is bounded independent of $\ep$ in the space $ L_t^\infty(\delta,T;L_x^p)$, $\forall \al\in \mathcal I$, $\forall  p\in(1,\infty)$, the product $n^\ep\na c^\ep $ is bounded in $L_t^\infty(\delta,T;L_x^2)$. Combining this fact and the estimate \eqref{n_ep_al_L2}, we have that  $\sum_\al|\na n_\al^\ep|_{L_t^2(\delta,T;L_x^2)}^2$ is bounded independent of $\ep$.

Define the space $V$ as $H^1\cap\{f|\int f|x|^2 dx<\infty\}$. A bounded set in the space $V$  is precompact in $L^2$. Combining the second moment bound \eqref{Second_moment_control} and the $H^1$ bound of $(n_\al^\ep)_{\al\in\mathcal{I}}$, we have that the set $(n_\al^\ep)_{\ep\geq 0}$, $\forall \al \in \mathcal{I}$ lies in a compact subspace of $L^2$ for almost every $t\in [\delta, T]$.
Finally, the $W$-estimate where  $W:=H^{-1}$: $\sum_\al|\pa_t n_\al^\ep|^2_{L_t^2(\delta,T; H_x^{-1})}$  is relatively straightforward thanks to the equation \eqref{EQ:KS_multiple_groups}.

\medskip\noindent
\textbf{$\bullet$ STEP \#3}. Proof of the free energy dissipation inequality \eqref{free_energy_dissipation}.
Since the solution to the regularized multi-species PKS system has a decreasing free energy $E[\mathbf{n}^\ep]$, we have that
\begin{equation}
E[\mathbf{n}^\ep(\delta)]\geq E[\mathbf{n}^\ep(t)]+\sum_\al\int_\delta^t\int n_\al^\ep |\na \log n_\al^\ep - \na c^\ep_{\al}|^2dxdt,\quad \forall t\in[\delta, T].\label{free_energy_dissipation_0}
\end{equation}
In order to show \eqref{free_energy_dissipation}, we need to show proper convergence for each single term in \eqref{free_energy_dissipation_0}. We first decompose the free energy dissipation term as follows:
\begin{align}
\sum_\al\int_\delta^T\int_{\rr^2} n_\al^\ep |\na \log n_\al^\ep &-\na c^\ep_{\al}|^2dxdt\nonumber\\
=& 4\sum_\al\int_\delta^T\int_{ \rr^2}|\na\sqrt{n_\al^\ep }|^2dxdt+\sum_\al\int_\delta^T\int_{ \rr^2}n_\al^\ep |\na c^\ep_{\al}|^2dxdt \label{free_energy_dissipation_1}\\
 &-2 \sum_{\al,\beta}\int_\delta^T\int_{ \rr^2} b_{\al\beta} n_\al^\ep  n_{\beta}^\ep dxdt.\nonumber
\end{align}
By the convexity of $f\rightarrow \int_{\rr^2}|\na \sqrt{f}|^2dx$, weak semi-continuity and the strong convergence of $n_\al^\ep$ in $L^2_t([\delta,T];L^2_x)$, we have that the first two terms in \eqref{free_energy_dissipation_1} satisfies the following inequalities
\begin{align}
\int_\delta^T\int_{ \rr^2}|\na\sqrt{n_\al }|^2dxdt\leq&\liminf_{\ep\ra 0_+}\int_\delta^T\int_{\rr^2}|\na\sqrt{n_\al ^{\ep}}|^2dxdt\label{na_sqrt_n_L2L2}\\
\int_\delta^T\int_{ \rr^2}n_\al |\na c_{\al}|^2dxdt=&\lim_{\ep\ra 0_+}\int_\delta^T\int_{\rr^2}n_\al ^{\ep}|\na c_{\al}^{\ep}|^2dxdt.\label{sqrt_n_na_c_L2L2}
\end{align}
Since the $(n_\al^\ep)_{\ep> 0}$ converges strongly in the $L^2([\delta, T]\times \rr^2)$ space. The last term on the right hand side of \eqref{free_energy_dissipation_1} converges.  Moreover, it can be checked that $S[{n}_\al^\ep(t)]\rightarrow S[{n}_\al(t)]$ for almost every $t\in[\delta,T]$. 
The argument is similar to the one used in \cite{BlanchetEJDE06} Lemma 4.6. As a result, combining these facts and \eqref{free_energy_dissipation_0}, \eqref{free_energy_dissipation_1}, \eqref{na_sqrt_n_L2L2} and \eqref{sqrt_n_na_c_L2L2} yields that
\begin{align*}
E[\mathbf{n}(\delta)]\geq E[\mathbf{n}(t)]+\sum_\al\int_\delta^t\int n_\al|\na \log n_\al-\na c_\al|^2dxds.
\end{align*}
Now by the monotone convergence theorem and a Cantor  diagonal argument, we have proven \eqref{free_energy_dissipation}.
\end{proof}

\begin{proof}[Proof of proposition \ref{pro 1.2 multiple group}]
We prove by contradiction. Assume that at time $T_\star<\infty$, the entropy $\sum_\al S[n_\al^\ep(T_\star)]$ is uniformly bounded with respect to $\ep$.

First, from the equation \eqref{EQ:KS_multiple_groups_regularized}, we directly calculate the time evolution of the entropy:
\begin{align}
\frac{d}{dt}\sum_\al\int n_\al^\ep\log n_\al^\ep dx=&-\sum_\al 4\int|\na\sqrt{n^\ep}|^2dx-\sum_{\al,\beta} b_{\al\beta}\int_{n^\ep_\al\leq K}n_\al^\ep \de(K^\ep*n^\ep_\beta)dx\nonumber\\
& -\sum_{\al,\beta} b_{\al\beta}\int_{n^\ep_\al>K}n_\al^\ep\de(K^\ep *n^\ep_\beta)dx\label{extension_theorem_proof_1}\\
=:&-\sum_\al 4\int|\na\sqrt{n_\al^\ep}|^2dx+I+II.\nonumber
\end{align}
The term $I$ in \eqref{extension_theorem_proof_1} can be estimated as follows:
\begin{align}
I\leq & \sum_{\al,\beta}K|b_{\al\beta}|\de K^\ep|_1M_\beta.\label{extension_theorem_proof_2}
\end{align}
Recall that $|\de K^\ep|_1$ is bounded independent of $\ep$, so term $I$ is bounded independent of $\ep$. For the term $II$ in \eqref{extension_theorem_proof_1}, we estimate it using the H\"older's inequality, Gagliardo-Nirenberg-Sobolev inequality and Young's inequality as follows:
\begin{align}
II\leq&\sum_{\al,\beta}|b_{\al\beta}|\bigg(\int_{n_\al^\ep\geq K}(n_{\al}^\ep)^2dx+|\de K^\ep|_1^2*|n_{\beta}^\ep|_2^2\bigg)\nonumber\\
\leq&\sum_{\al,\beta}|b_{\al\beta}|\left(\left(\int_{n_\al^\ep\geq K}n^{\ep}_\al dx\right)^{1/2}|n_\al^\ep|_3^{3/2}+|\de K^\ep|_1^2\left(M_\beta K+\int_{n_\beta^\ep\geq K}(n_\beta^\ep)^2dx\right) \right)\nonumber\\
\leq&\sum_{\al,\beta}|b_{\al\beta}|\left(\frac{{S}^{1/2}_+[n_\al]}{(\log K)^{1/2}}C_{GNS}|n_\al^\ep|_1^{1/2}|\na \sqrt{n^\ep_\al}|_2^2\right.\label{extension_theorem_proof_3}\\
& \left.\qquad \qquad \ +C_{GNS}|\de K^\ep|_1^2\frac{{S}^{1/2}_+[n_\beta]}{(\log K)^{1/2}}M_\beta^{1/2}|\na \sqrt{ n^\ep_\beta}|_2^2+|\de K^\ep|_1^2M_\beta K\right)\nonumber\\
\leq&\sum_{\al,\beta}|b_{\al\beta}|C_{GNS}(1+|\de K^\ep|_1^2)\frac{{S}^{1/2}_+[n_\al]}{(\log K)^{1/2}}M_\al^{1/2}|\na \sqrt{n_\al^\ep}|_2^2+\sum_{\al,\beta}|b_{\al\beta}|\cdot|\de K^\ep|_1^2 M_\al K.\nonumber  
\end{align}
Here $S_+$ denote the positive part of the entropy, i.e., $S_+[f]=\int f\log^+fdx$. Combining the estimates \eqref{extension_theorem_proof_1}, \eqref{extension_theorem_proof_2} with \eqref{extension_theorem_proof_3}, we end up with
\begin{align*}
\frac{d}{dt}\sum_\al S [n_\al^\ep]  \leq& \sum_\al\underbrace{\left(-4+\sum_{\beta}|b_{\al\beta}|C_{GNS}(1+|\de K^\ep|_1^2)\frac{{S}^{1/2}_+[n_\al^\ep]}{(\log K)^{1/2}}M_\al^{1/2}\right)}_{=:A(t)}|\na \sqrt{n_\al^\ep}|_2^2\\
 & +\sum_{\al,\beta}|b_{\al\beta}|(1+|\de K^\ep|_1^2) M_\al K.
\end{align*}
Since the negative part of the entropy and the second moment are bounded \eqref{negative_part_of_entropy}, \eqref{Second_moment_control}, we have that $A(t)$ can be estimated as follows:
\begin{align}
A(t)\leq& -4+\frac{C_{GNS}}{(\log K)^{1/2}}\sum_\beta|b_{\al\beta}|(1+|\de K^\ep|_1^2)M_\al^{1/2}\bigg( S[n_\al^\ep(t)]+\frac{1}{2}V(T_\star)\nonumber\\
&+ \frac{1}{2}\left(4\sum_\al M_\al+\sum_{\al,\beta}\frac{(b_{\al\beta})_-M_\al M_\beta}{2\pi}\right)(t-T_\star)+\log (2\pi)M_\al+e^{-1} \bigg)^{1/2}
\end{align}
Since the entropy $\sum_\al S[n_\al^\ep]$ is uniformly bounded independent of $\ep$ at time $T_\star$, we could take the $K$ large such that $A(t)\leq -2$ at time $T_\star$. By continuity, there is a small time $\tau_\ep$ such that for $\forall t\in[T_\star, T_\star+\tau_\ep)$,
\begin{equation}
\sum_{\al} S[n_\al^\ep(t)]\leq \sum_{\al}S[n_\al^\ep(T_\star)]+(t-T_\star)\sum_{\al,\beta}|b_{\al\beta}|(1+|\de K^\ep|_1^2) M_\al K,\quad \forall t\in [T_\star, T_\star+\tau_\ep].
\end{equation}
But then we can pick $\tau$ independent of $\ep$ such that
\begin{align*}
A(t)\leq& -4+\frac{C(\mathbf{B},\mathbf{M})}{(\log K)^{1/2}}\bigg(\sum_\al S[n_\al^\ep(T_\star)]+K\tau+1\bigg)\leq 0.
\end{align*}
The solution $\tau$ to the above inequality is independent of the choice of $\ep$, and $[T_\star, T_\star+\tau)\subset [T_\star, T_\star+\tau_\ep)$ for any $\ep$. Therefore, by Proposition \ref{pro 2.1 multiple group}, we can extend the free energy solution pass the $T_\star$, contradicting the maximality of $T_\star$. As a result, we have completed the proof of the proposition.
\end{proof}
\section{Smoothness of the free energy solutions}
In this section, we prove Theorem \ref{Theorem_of_smoothness_of_solutions}. The proof is similar to the arguments in \cite{EganaMischler16}. For the sake of brevity, we skip some details and emphasize the main differences. The proof is decomposed into several lemmas. We first introduce the concept of Fisher information and renormalized solutions, then prove the $L^p$ integrability of the physically relevant free energy solutions and use standard parabolic equation technique to improve it to $C^\infty$ regularity, and conclude with the proof of the free energy equality.

First note from the physical restrictions \eqref{Total_second_moment_evolution} and \eqref{A_t_Bound} that we have bounded entropy and free energy dissipation, i.e., $\mathcal{A}_t[\mathbf{n}]<\infty$ and bounded second moment $ V[\mathbf{n}(t)]$ for all $t\in [0,T_\star)$, where $T_\star$ is the maximal existence time.

Next we present the following time integral bound for the Fisher information
\begin{lem}\label{Lem:I_time_integral}
If the conditions in the Theorem \ref{Theorem_of_smoothness_of_solutions} are satisfied, for any physically relevant free energy solutions to \eqref{EQ:KS_multiple_groups} and any time $T\in [0,T_\star)$, there exists a constant $C_F$ such that the Fisher information of the solution
\begin{equation}
F[n_\al]:=\int_{\rr^2}\frac{|\na n_\al|^2}{n_\al}dx,
\end{equation}
is time integrable, i.e.,
\begin{equation}
\sum_{\al\in \mathcal{I}}\int_0^T F[n_\al(t)]dt\leq C_F\left(M, T,\mathcal{A}_T[\mathbf{n}], \sup_{t\in [0,T)}\sum_\al V_\al(t)\right),\quad T\in[0,T_\star).
\end{equation}
\end{lem}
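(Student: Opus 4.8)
The plan is to extract the Fisher information from the free energy dissipation and then absorb the resulting chemotactic cross terms. The starting point is the pointwise identity
\[
n_\al|\na\log n_\al-\na c_\al|^2=\frac{|\na n_\al|^2}{n_\al}-2\na n_\al\cdot\na c_\al+n_\al|\na c_\al|^2,
\]
so that, writing $\mc D[\mathbf n]=\sum_\al\int n_\al|\na\log n_\al-\na c_\al|^2dx$ for the dissipation in \eqref{Free_energy_dissipation},
\[
\sum_\al F[n_\al]=\mc D[\mathbf n]+2\sum_\al\int\na n_\al\cdot\na c_\al\,dx-\sum_\al\int n_\al|\na c_\al|^2dx.
\]
Integrating by parts in the middle term and using $-\de c_\al=\sum_\beta b_{\al\beta}n_\beta$ turns it into $\sum_\beta b_{\al\beta}\int n_\al n_\beta\,dx$, while the last term is non-positive and may be discarded. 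This yields the clean inequality $\sum_\al F[n_\al]\le \mc D[\mathbf n]+2\sum_{\al,\beta}|b_{\al\beta}|\int n_\al n_\beta\,dx$.

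The second step is to control the quadratic cross term by a small fraction of $\sum_\al F[n_\al]$. I would reuse verbatim the super-level splitting already carried out in \eqref{S_n_al_time_evolution_1}: writing $n_\al=n_\al\mathbf 1_{n_\al\ge K}+n_\al\mathbf 1_{n_\al<K}$ and applying H\"older together with the Gagliardo--Nirenberg--Sobolev inequality \eqref{GNS_Lp} to the large part gives, since $\sum_\al|\na\sqrt{n_\al}|_2^2=\tfrac14\sum_\al F[n_\al]$,
\[
2\sum_{\al,\beta}|b_{\al\beta}|\int n_\al n_\beta\,dx\le \tfrac12\eta(K)^{1/2}C_{GNS}\max_{\al,\beta}|b_{\al\beta}|\Big(\sum_\al M_\al^{1/2}\Big)\sum_\al F[n_\al]+C(K,\mathbf B,\mathbf M).
\]
Here $\eta(K)$ is the super-level mass bound \eqref{eta(K)}, and the decisive point is that $\eta(K)\to0$ as $K\to\infty$. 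Choosing $K$ large enough, depending only on the admissible $L\log L$ bound, so that the prefactor of $\sum_\al F[n_\al]$ is at most $\tfrac12$, I can absorb that term into the left-hand side to obtain the pointwise-in-time bound $\sum_\al F[n_\al]\le 2\mc D[\mathbf n]+2C(K,\mathbf B,\mathbf M)$.

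The final step is to integrate in time on $[0,T]$. The dissipation integrates to at most $\mathcal A_T[\mathbf n]$ by the definition \eqref{A_t_Bound}, so $\sum_\al\int_0^TF[n_\al]\,dt\le 2\mathcal A_T[\mathbf n]+2C(K,\mathbf B,\mathbf M)T$. To see the claimed dependence, note that the threshold $K$ is dictated by a uniform bound on $\sum_\al\int|n_\al\log n_\al|\,dx$, whose positive part is controlled by $\mathcal A_T[\mathbf n]$ and whose negative part is controlled by $\tfrac12\sum_\al V_\al+\log(2\pi)\sum_\al M_\al+|\mathcal I|/e$ via \eqref{negative_part_of_entropy}; hence $K$, and therefore $C_F$, depends only on $\mathbf M$, $T$, $\mathcal A_T[\mathbf n]$ and $\sup_{t\in[0,T)}\sum_\al V_\al(t)$. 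The main obstacle is not any single estimate but the absorption mechanism: one must verify that the coefficient multiplying $\sum_\al F[n_\al]$ in the cross-term bound can be forced strictly below $1$, which is exactly where the smallness of the super-level mass $\eta(K)$ and the GNS inequality are essential. A secondary technical point is that the algebraic identity and integration by parts are rigorous only for the regularized solutions, so the bound should first be established at the $\ep$-level and then passed to the limit using the lower semicontinuity already exploited in \eqref{na_sqrt_n_L2L2}--\eqref{sqrt_n_na_c_L2L2}.
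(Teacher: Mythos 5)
Your core computation is exactly the argument the paper has in mind: the paper's own proof of Lemma \ref{Lem:I_time_integral} is a two-line deferral to the single-species case (Lemma 2.2 of \cite{EganaMischler16}), and your expansion of the dissipation integrand, the observation that $-\sum_\al\int n_\al|\na c_\al|^2dx$ enters with a favorable sign and can be discarded, the conversion $2\int\na n_\al\cdot\na c_\al\,dx=2\sum_\beta b_{\al\beta}\int n_\al n_\beta\,dx$, and the absorption of the quadratic term through the level-set/GNS mechanism of \eqref{S_n_al_time_evolution_1} and \eqref{eta(K)} is precisely that argument transplanted to the system. Your bookkeeping of the constant is also right: the threshold $K$ is dictated by the entropy bound, whose positive part comes from $\mathcal{A}_T$ in \eqref{A_t_Bound} and whose negative part comes from the second moment via \eqref{negative_part_of_entropy}, so $C_F$ depends only on the quantities listed in the statement.

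However, your closing ``secondary technical point'' is where the proposal has a genuine gap, and the fix you propose would fail. The lemma is asserted for an \emph{arbitrary} physically relevant free energy solution, and this generality is essential: Lemma \ref{Lem:I_time_integral} feeds into Theorem \ref{Theorem_of_smoothness_of_solutions} and then Theorem \ref{Theorem_of_uniqueness}, whose whole point is to rule out solutions other than the one constructed in Section 3. A given physically relevant solution is not known, at this stage, to arise as a limit of solutions of the regularized system \eqref{EQ:KS_multiple_groups_regularized}; identifying all such solutions with the constructed one is exactly the uniqueness statement, which is proved only later and itself relies on the present lemma. So ``establish the bound at the $\ep$-level and pass to the limit'' proves the Fisher bound only for the constructed solution and is circular for any other. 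The correct completion is to justify the manipulations directly on the given weak solution: since $\mathcal{A}_T[\mathbf{n}]<\infty$ gives $\sqrt{n_\al}\,(\na\log n_\al-\na c_\al)\in L^2_{t,x}$, one writes $\na\sqrt{n_\al}=\tfrac12\sqrt{n_\al}\,(\na\log n_\al-\na c_\al)+\tfrac12\sqrt{n_\al}\,\na c_\al$ and runs the estimate on truncated (renormalized) quantities, in the spirit of Lemma \ref{Lem:Renormalized relation} and of \cite{EganaMischler16}. This also repairs a second soft spot in your argument: the absorption step subtracts $\tfrac12\sum_\al F[n_\al](t)$ from both sides, which is legitimate only once $\sum_\al F[n_\al](t)$ is known to be finite for a.e. $t$; with a truncated Fisher information the absorbed quantity is finite by construction, and one concludes by Fatou or monotone convergence as the truncation is removed.
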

\begin{proof}
The proof is essentially the same as the corresponding one in the single species case. For the sake of brevity, we skip the proof here and refer the interested readers to the proof of Lemma 2.2 and the remark after in the paper \cite{EganaMischler16} for further details.
\end{proof}
\begin{rmk}
For the supercritical mass case, one can use the relative entropy method to derive the boundness of the entropy and entropy dissipation $\mathcal{A}_T[\mathbf{n}]$ before the blow-up time $T_\star$. We refer the interested reader to the papers \cite{BlanchetCarrilloMasmoudi08} and \cite{EganaMischler16} for further details. 
\end{rmk}

The next lemma enable us to take advantage of choosing different renormalizing functions in the later proof.
\begin{lem}\label{Lem:Renormalized relation}
Any physically relevant free energy solutions $\mathbf{n}$ to \eqref{EQ:KS_multiple_groups} satisfy the following estimate for any times $0\leq t_0\leq t_1<T_\star$
\begin{align}
\int_{\rr^2}\Gamma(n_{\al}(x,t_1))&dx+\int_{t_0}^{t_1} \int_{\rr^2} \Gamma''(n_\al(x,s))|\na n_\al(x,s)|^2dxds\nonumber\\
\leq &\int_{\rr^2} \Gamma(n_\al(x,t_0))dx \nonumber\\
& \ +\int_{t_0}^{t_1}\int_{\rr^2} \bigg((\Gamma'(n_\al(x,s))n_\al(x,s)-\Gamma(n_\al(x,s)))\bigg(\sum_{\beta\in \mathcal{I}}b_{\al\beta}n_\beta(s)\bigg)\bigg)_+dxds\label{Renormalized relation}\\
\leq &\int_{\rr^2} \Gamma(n_\al(x,t_0))dx\nonumber\\
& \ +\sum_{\beta\in \mathcal{I}}|b_{\al\beta}|\int_{t_0}^{t_1}\int_{\rr^2} {\left|\Gamma'(n_\al(x,s))n_\al(x,s)-\Gamma(n_\al(x,s))\right| n_\beta(s)dxds},\nonumber
\end{align}
where $\Gamma:\rr\rightarrow\rr$ is an arbitrary convex piecewise $C^1$ function satisfying the following estimates with some constant $C_\beta$
\begin{equation}\label{renormalizing_function_constraint}
|\Gamma(u)|\leq C_\Gamma(1+u(\log u)_+),\quad |\Gamma'(u)u-\Gamma(u)|\leq C_{\Gamma}(1+|u|),\quad \forall u\in \rr.
\end{equation}

\end{lem}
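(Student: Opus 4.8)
The plan is to establish first the formal renormalized \emph{identity} and then relax it to the two claimed inequalities, with the analytic effort concentrated in justifying the chain rule for the weak free energy solution. Throughout I normalize $\Gamma(0)=0$, which is harmless: the integrand $\Gamma'(u)u-\Gamma(u)$ is insensitive to an additive constant in $\Gamma$, and only the normalized $\int\Gamma(n_\al)\,dx$ is meaningful for a finite-mass density on $\rr^2$. Formally, multiplying the $\al$-th equation $\partial_t n_\al=\de n_\al-\na\cdot(n_\al\na c_\al)$ by $\Gamma'(n_\al)$ and integrating by parts in the diffusion term gives
\begin{equation*}
\frac{d}{dt}\int\Gamma(n_\al)\,dx=-\int\Gamma''(n_\al)|\na n_\al|^2\,dx+\int\na\Gamma'(n_\al)\cdot\big(n_\al\na c_\al\big)\,dx.
\end{equation*}
The decisive step is to treat the drift term via the auxiliary function $\phi(u):=u\Gamma'(u)-\Gamma(u)$, which satisfies $\phi(0)=0$ and $\phi'(u)=u\Gamma''(u)$, so that $\na\phi(n_\al)=n_\al\Gamma''(n_\al)\na n_\al=n_\al\na\Gamma'(n_\al)$. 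Integrating by parts once more and using $-\de c_\al=\sum_\beta b_{\al\beta}n_\beta$ turns the drift term into $\int\phi(n_\al)\sum_\beta b_{\al\beta}n_\beta\,dx$, which is exactly $\int(\Gamma'(n_\al)n_\al-\Gamma(n_\al))\sum_\beta b_{\al\beta}n_\beta\,dx$.

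To make this rigorous I would proceed in two approximation layers. First, approximate $\Gamma$ by convex $\Gamma_k\in C^2$ with globally bounded $\Gamma_k''$, chosen so that $\Gamma_k\to\Gamma$ and $\Gamma_k'\to\Gamma'$ pointwise, $\Gamma_k''(n_\al)|\na n_\al|^2\to\Gamma''(n_\al)|\na n_\al|^2$ a.e. (the atoms of $\Gamma''$ at the kinks of the piecewise-$C^1$ function contribute nothing since $\na n_\al=0$ a.e.\ on each level set), and so that the growth bounds \eqref{renormalizing_function_constraint} pass to $\Gamma_k$ uniformly. Second, for a fixed smooth $\Gamma_k$ with Lipschitz $\Gamma_k'$, I would justify the identity by regularizing the density, $n_\al^\delta:=n_\al*\rho_\delta$, writing the equation for $n_\al^\delta$, and letting $\delta\to0$; the commutator between the transport operator $\na\cdot(n_\al\na c_\al)$ and the mollifier tends to zero in $L^1_{t,x}$ by a DiPerna--Lions type estimate. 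The indispensable input here is the Fisher information bound of Lemma~\ref{Lem:I_time_integral}, which yields $\na\sqrt{n_\al}\in L^2_tL^2_x$ and hence, by Gagliardo--Nirenberg \eqref{GNS_Lp} applied to $u=\sqrt{n_\al}$ with $p=4$, the bound $\int_0^T|n_\al|_2^2\,dt\leq CM_\al\int_0^T|\na\sqrt{n_\al}|_2^2\,dt<\infty$, i.e.\ $n_\al\in L^2_{t,x}$. This is precisely the regularity that makes every term well defined, controls the product $n_\al\na c_\al$ and the commutator, and forces the boundary/tail contributions from the integrations by parts to vanish.

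Finiteness of both sides follows from the growth conditions \eqref{renormalizing_function_constraint}: the bound $|\Gamma(u)|\leq C_\Gamma(1+u(\log u)_+)$ makes $\int\Gamma(n_\al)\,dx$ finite through the mass and entropy control in \eqref{A_t_Bound}, while $|\Gamma'(u)u-\Gamma(u)|\leq C_\Gamma(1+|u|)$ bounds the right-hand side by $\sum_\beta|b_{\al\beta}|\int_{t_0}^{t_1}\!\!\int(1+n_\al)n_\beta\,dx\,ds<\infty$, finite by Cauchy--Schwarz since $n_\al,n_\beta\in L^2_{t,x}$. Passing $k\to\infty$ in the renormalized identity for $\Gamma_k$ (relaxed to $\leq$), the two boundary terms $\int\Gamma_k(n_\al(t_j))$ converge by dominated convergence, the diffusion term obeys $\liminf_k\int_{t_0}^{t_1}\!\!\int\Gamma_k''(n_\al)|\na n_\al|^2\geq\int_{t_0}^{t_1}\!\!\int\Gamma''(n_\al)|\na n_\al|^2$ by Fatou (here convexity $\Gamma''\geq0$ is used and the possibly singular part of $\Gamma''$ only helps), and the drift term converges by dominated convergence. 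Chaining these gives the renormalized identity for $\Gamma$, from which the first inequality in \eqref{Renormalized relation} follows by replacing the space-time integrand by its positive part ($f\leq f_+$), and the second by $(\cdot)_+\leq|\cdot|$ together with $\sum_\beta b_{\al\beta}n_\beta\leq\sum_\beta|b_{\al\beta}|n_\beta$ (using $n_\beta\geq0$).

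The main obstacle is the rigorous chain rule (renormalization) for the merely weak free energy solution: one must show that the distributional solution is in fact a renormalized solution, which amounts to controlling the mollification commutator in the drift term and verifying that no flux survives at spatial infinity. This is exactly where the time-integrability of the Fisher information is essential, as the $L^2_{t,x}$ and $\na\sqrt{n_\al}\in L^2_{t,x}$ bounds are the minimal regularity under which $n_\al\na c_\al$ and the commutator can be handled; the non-smoothness of the piecewise-$C^1$ $\Gamma$ is a secondary but genuine point, resolved by the monotone smoothing together with Fatou's lemma. As noted in the excerpt, the remaining routine verifications parallel the single-species treatment in \cite{EganaMischler16}.
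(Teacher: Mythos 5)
Your proposal is correct and follows essentially the same route as the paper: the formal chain-rule computation producing the renormalized relation with drift term $(\Gamma'(n_\al)n_\al-\Gamma(n_\al))\sum_\beta b_{\al\beta}n_\beta$, then approximation of $\Gamma$ and passage to the limit using dominated convergence for the drift term --- which is precisely where the two-sided bound $|\Gamma'(u)u-\Gamma(u)|\leq C_\Gamma(1+|u|)$ of \eqref{renormalizing_function_constraint} is needed, exactly as the paper emphasizes --- together with lower semicontinuity for the dissipation term, and finally $f\leq f_+\leq |f|$ to obtain the two inequalities in \eqref{Renormalized relation}. Your extra mollification/commutator layer simply fills in the rigorous justification that the paper delegates to \cite{EganaMischler16}, so the two arguments coincide in substance.
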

\begin{rmk}
Here in order to analyse the PKS equation \eqref{EQ:KS_multiple_groups} with general chemical generation coefficients, we introduce a stronger restriction on the growth of the normalizing function $\Gamma$ comparing to the paper \cite{EganaMischler16}. Here we assume that the absolute value of the expression $\Gamma'(u)u-\Gamma(u)$ grows at most linearly at infinity, whereas in the paper \cite{EganaMischler16}, it is only assumed that the positive part $(\Gamma'(u)u-\Gamma(u))_+$ grows at most linearly.
\end{rmk}
\begin{proof}
The proof is essentially the same as the proof of Lemma 2.5 in the paper \cite{EganaMischler16}. For the sake of simplicity, we do a formal computation and refer the interested readers to \cite{EganaMischler16} for further justifications. By applying the chain rule, we obtain
\begin{equation}
\pa_t\Gamma(n_\al)=\de \Gamma(n_\al)-\Gamma''(n_\al)|\na n_\al|^2-\na c_\al\cdot \na \Gamma(n_\al)-\Gamma'(n_\al)\de c_{\al} n_\al,\quad \forall\al\in\mathcal{I}.
\end{equation}
Now test it against an arbitrary smooth function $\chi\in \mathcal{D}(\rr^2)$ and use the relation $-\de c_\al=\sum_\beta b_{\al\beta}n_\beta$, we have the following relation:
\begin{align*}
\int_{\rr^2} \Gamma(n_\al(t_1))\chi dx+&\int_{t_0}^{t_1}\int_{\rr^2}\Gamma''(n_\al)|\na n_\al(s)|^2 \chi dxds=\int_{\rr^2} \Gamma(n_\al(t_0))\chi dx\\
+&\int_{t_0}^{t_1}\int_{\rr^2}\left(\Gamma'(n_\al)\sum_{\beta}b_{\al\beta}n_\beta n_\al\chi+\Gamma(n_\al)\de \chi+\Gamma(n_\al)\na \cdot(\na c_\al \chi)\right)dxds.
\end{align*}
Rewrite the above relation using the integration by parts and the fact that $\de c_\al=-\sum_\beta b_{\al\beta}n_\beta$,
\begin{align*}
\int_{\rr^2} &\Gamma(n_\al(t_1))\chi dx+\int_{t_0}^{t_1}\int_{\rr^2}\Gamma''(n_\al)|\na n_\al(s)|^2 \chi dxds\nonumber\\
=&\int_{\rr^2} \Gamma(n_\al(t_0))\chi dx\\
& \ +\int_{t_0}^{t_1}\int_{\rr^2}\left(\left[\Gamma'(n_\al) n_\al-\Gamma(n_\al)\right]\left(\sum_{\beta}b_{\al\beta}n_\beta\right)\chi+\left[\Gamma(n_\al)\de \chi+\Gamma(n_\al)\na c_\al \cdot\na\chi\right]\right)dxds.
\end{align*}
Now take $\chi\rightarrow 1$, we end up with the relation \eqref{Renormalized relation}.

In order to prove the Lemma, one first prove \eqref{Renormalized relation} with renormalizing function $\Gamma_i$, $i\in\mathbb{N}$, which grows at most linearly at infinity. Next one prove the estimate \eqref{Renormalized relation} with renormalizing functions with super linear growth \eqref{renormalizing_function_constraint} by taking limit of the inequalities \eqref{Renormalized relation} subject to approximating linear renormalizing functions $(\Gamma_i)_{i\in\mathbb{N}}$. One use the Lebesgue dominated convergence theorem to guarantee the convergence of the term \begin{align*}
\lim_{i\rightarrow \infty}\int_{t_0}^{t_1}\left([\Gamma_i'(n_\al)n_\al-\Gamma_i(n_\al)]\bigg(\sum_{\beta}b_{\al\beta}n_\beta\bigg)\right)_+dxds.
\end{align*}
However, if the function $\sum_\beta b_{\al\beta}n_{\beta}$ can be either positive or negative, we have to assume that $|\Gamma'(u)u-\Gamma(u)|$ grows at most linearly near infinity.  
\end{proof}

Now we prove the $L^p$ estimate of the solution
\begin{lem}
Consider physically relevant free energy solutions $(n_\al)_{\al\in \mathcal{I}}$ to equation \eqref{EQ:KS_multiple_groups} subject to initial data \eqref{General_condition_for_initial_data}. Let $t_0\in [0,T_\star)$ be the time such that $\sum_{\al\in \mathcal{I}}|n_\al(t_0)|_p<\infty$, for some $p\in [2,\infty)$. Then for all time $t_1\in [t_0,T]\subset[t_0,T_\star)$, there exists a constant $C_p:=C_p(\mathbf{M},T, \sum_{\al\in\mathcal{I}}|n_\al(t_0)|_p,V[\mathbf{n}(t_0)],\mathcal{A}_T)$ such that
\begin{align}\
 \sum_{\al\in \mathcal{I}}|n_\al(t_1)|_p^p+\frac{p-1}{2p}\sum_{\al\in \mathcal{I}}\int_{t_0}^{t_1}|\na (n_\al^{p/2})|_2^2ds\leq C_p,\quad p\in [2,\infty).
\end{align}
\end{lem}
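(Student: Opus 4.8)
The plan is to carry out the $L^p$-energy estimate directly on the free-energy solution, using the renormalization relation of Lemma~\ref{Lem:Renormalized relation} to legitimize the formal computation and the super-level mass bound \eqref{eta(K)} to absorb the nonlinearity into the diffusion; this is the rigorous analogue of the regularized estimate in Lemma~\ref{L_p_bound_n_al}. First I would insert the convex renormalizer $\Gamma(u)=(u-K)_+^p$ into \eqref{Renormalized relation}, for a truncation level $K$ to be fixed later. Since $(u-K)_+^p$ has super-linear growth and violates the admissibility constraints \eqref{renormalizing_function_constraint}, I would obtain the relation for it as a limit of the admissible, at-most-linearly-growing approximants $\Gamma_m\uparrow(u-K)_+^p$ (capped linearly beyond level $m$), passing to the limit by monotone convergence on the left-hand side and a uniform-in-$m$ bound on the source, established as in the next step. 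Using $\Gamma''(n_\al)|\na n_\al|^2=\frac{4(p-1)}{p}|\na(n_\al-K)_+^{p/2}|^2$ and $\Gamma'(n_\al)n_\al-\Gamma(n_\al)=(n_\al-K)_+^{p-1}\big((p-1)n_\al+K\big)$, and summing over $\al$, this gives
\begin{equation*}
\sum_\al\int(n_\al(t_1)-K)_+^p\,dx+\frac{4(p-1)}{p}\sum_\al\int_{t_0}^{t_1}|\na(n_\al-K)_+^{p/2}|_2^2\,ds\leq\sum_\al\int(n_\al(t_0)-K)_+^p\,dx+\mathcal{R},
\end{equation*}
with $\mathcal{R}=\sum_{\al,\beta}|b_{\al\beta}|\int_{t_0}^{t_1}\!\int(n_\al-K)_+^{p-1}\big((p-1)n_\al+K\big)n_\beta\,dx\,ds$.

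The crux is the control of $\mathcal{R}$. Expanding $n_\al=(n_\al-K)+K$ and $n_\beta=(n_\beta-K)_++\min(n_\beta,K)$, the top-order contribution is $\sum_{\al,\beta}|b_{\al\beta}|\int(n_\al-K)_+^p(n_\beta-K)_+\,dx$, all remaining pieces carrying explicit factors of $K$. For the top term I would use Young's inequality $(n_\al-K)_+^p(n_\beta-K)_+\le\frac{p}{p+1}(n_\al-K)_+^{p+1}+\frac{1}{p+1}(n_\beta-K)_+^{p+1}$ and then the GNS bound \eqref{f-K_+_GNS}, so that the two pieces are dominated respectively by $C_{GNS}\eta(K)|\na(n_\al-K)_+^{p/2}|_2^2$ and $C_{GNS}\eta(K)|\na(n_\beta-K)_+^{p/2}|_2^2$. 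The lower-order pieces, after H\"older and Young, are bounded by $C(K,\mathbf{B},\mathbf{M})\big(\sum_\al|(n_\al-K)_+|_p^p+1\big)$ plus a further $\eta(K)$-small multiple of the dissipation. The entropy bound encoded in $\mathcal{A}_T$ is what forces $\eta(K)=C_{L\log L}/\log K$ to be arbitrarily small for $K$ large, consult \eqref{eta(K)}.

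Choosing $K$ large enough that the cumulative $\eta(K)$-prefactor of the dissipation stays below $\frac{4(p-1)}{p}-\frac{p-1}{2p}$, I would absorb the top-order part of $\mathcal{R}$ into the left-hand dissipation and arrive at
\begin{equation*}
\phi(t_1)+\frac{p-1}{2p}\sum_\al\int_{t_0}^{t_1}|\na(n_\al-K)_+^{p/2}|_2^2\,ds\leq\phi(t_0)+C\int_{t_0}^{t_1}\phi(s)\,ds+C(t_1-t_0),\qquad\phi(t):=\sum_\al\int(n_\al(t)-K)_+^p\,dx.
\end{equation*}
Gr\"onwall's inequality, together with $\phi(t_0)\le\sum_\al|n_\al(t_0)|_p^p<\infty$, bounds $\phi$ and the reduced dissipation on $[t_0,T]$. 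Finally I would upgrade $(n_\al-K)_+$ to $n_\al$: mass conservation \eqref{Mass_conservation} and $|\{n_\al>K\}|\le M_\al/K$ give $\int n_\al^p\,dx\le2^{p-1}\int(n_\al-K)_+^p\,dx+2^pK^{p-1}M_\al$, while on the diffusion side the splitting of space into $\{n_\al\le2K\}$ and $\{n_\al>2K\}$ together with $|\na n_\al|^2=4n_\al|\na\sqrt{n_\al}|^2$ and the pointwise bound $n_\al\le2(n_\al-K)_+$ on $\{n_\al>2K\}$ controls $\int_{t_0}^{t_1}|\na n_\al^{p/2}|_2^2\,ds$ by a $K$-dependent multiple of the time-integrated Fisher information $\int_{t_0}^{t_1}F[n_\al]\,ds$ plus the reduced $(n_\al-K)_+$-dissipation, and the former is finite by Lemma~\ref{Lem:I_time_integral}.

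The main obstacle I anticipate is located in the second step and is twofold: first, legitimately employing the super-linear renormalizer $(u-K)_+^p$ within the framework of Lemma~\ref{Lem:Renormalized relation}, whose hypotheses \eqref{renormalizing_function_constraint} admit only $u\log u$-growth, so the limiting argument in $m$ must keep the right-hand source uniformly controlled before one can pass to the limit; second, arranging the H\"older/GNS interpolation so that the dangerous term lands exactly on the power $(n_\al-K)_+^{p+1}$ matching the available dissipation $|\na(n_\al-K)_+^{p/2}|_2^2$, with a prefactor that is a small multiple of $\eta(K)$. Once these absorptions are in place, the Gr\"onwall closure and the passage from $(n_\al-K)_+$ back to $n_\al$ are routine.
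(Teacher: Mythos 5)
Your overall strategy --- a direct $(u-K)_+^p$ energy estimate closed by the GNS absorption \eqref{f-K_+_GNS} --- is essentially the proof of Lemma \ref{L_p_bound_n_al}, but that lemma concerns the \emph{regularized} solutions, which are smooth and bounded so the formal computation is legitimate. For the weak (physically relevant free energy) solutions in the present lemma, the only rigorous tool is the renormalized relation \eqref{Renormalized relation}, and the growth restriction \eqref{renormalizing_function_constraint} there is not a technicality removable by approximation: as the remark after Lemma \ref{Lem:Renormalized relation} explains, since $\sum_\beta b_{\al\beta}n_\beta$ changes sign, passing to the limit in the source term requires a dominating function, and for $\Gamma(u)=(u-K)_+^p$ the natural dominant is $\sim n_\al^p\, n_\beta$, whose space-time integrability is (by Young) precisely the $L^{p+1}$-type bound being proven. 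Your proposed fix --- approximants $\Gamma_m\uparrow(u-K)_+^p$ with ``a uniform-in-$m$ bound on the source established as in the next step'' --- is circular at exactly this point. At the approximant level the dissipation produced on the left-hand side is truncated, $\int|\na[(n_\al\wedge m-K)_+]^{p/2}|^2$, while after your Young split the cross-species piece $\frac{1}{p+1}\int(n_\beta-K)_+^{p+1}$ carries only the indicator $\mathbf{1}_{n_\al\leq m}$, i.e.\ a truncation on the \emph{wrong} species: to apply \eqref{f-K_+_GNS} to it you need the untruncated dissipation $\int|\na(n_\beta-K)_+^{p/2}|^2$ of species $\beta$, which appears on no approximant inequality and whose finiteness is the conclusion of the lemma. (In the single-species case this mismatch does not occur, which is why the direct argument works there; for the coupled system it is fatal to the scheme.)

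This is exactly the difficulty the paper's two-step proof is designed to avoid. Step 1 upgrades the $L\log L$ bound to the $S_2$ bound \eqref{EQ:H2entropy_estimate} (integrability of $n(\widetilde{\log}\, n)^2$) using an \emph{admissible} renormalizer; Step 2 then uses $\gamma_K$ of \eqref{betaK_Lp}, which equals $u^p/p$ below $K$ but grows like $\frac{K^{p-1}}{\log K}u\log u$ above $K$, hence satisfies \eqref{renormalizing_function_constraint} for every fixed $K$. The price is that the super-level-$K$ source terms come with weights $K^{p-1}$, against a dissipation over $\{n_\al>K\}$ whose coefficient is only $K^{p-1}/\log K$; your $\eta(K)\sim 1/\log K$ smallness from \eqref{eta(K)} is then of the same order as the dissipation coefficient and leaves nothing to spare, and it is the extra $(\log K)^{-2}$ smallness furnished by the Step 1 bound that makes the absorption in \eqref{T12} possible before sending $K\to\infty$. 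Your proposal contains no substitute for Step 1, so even granting a renormalized inequality, the absorption cannot be closed with $L\log L$ information alone. In short, the obstacle you flagged at the end is indeed the crux, and the monotone-approximation argument you sketch does not resolve it.
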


\begin{proof}
The proof is similar to the corresponding one in \cite{EganaMischler16}. We decompose the proof into two steps.

\textbf{Step 1:} We prove a logarithmic improvement to the $L\log L$ integrability. The goal is to show that there exists a constant $C_{S_2}:=C_{S_2}(M,T,\mathcal{A}_T, \sup_{[t_0, T]} V[\mathbf{n}(t)])$ such that the following estimate is satisfied for any $t_1\in [ t_0, T]$,
\begin{align}\label{EQ:H2entropy_estimate}
\sum_\al S_2[n_\al(t_1)]\leq \sum_\al S_2[n_\al(t_0)]+C_{S_2},\quad S_2[f]:=&\int f(\widetilde{\log }f)^2dx,
\end{align}
where the $\widetilde{\log}$ function is the logarithmic function truncated from below:
\begin{align}
\widetilde{\log}u:=\mathbf{1}_{u\leq e}+(\log u)\mathbf{1}_{u>e}.
\end{align}
For the sake of notational simplicity, we further introduce the bounded truncated logarithmic function $\widetilde{\log}_K$ as follows:
\begin{equation}
\widetilde{\log}_K(u):=\mathbf{1}_{u\leq e}+\mathbf{1}_{e<u\leq K}\log u+\mathbf{1}_{u>K}\log K.
\end{equation}

Since $(\cdot)\widetilde\log^2(\cdot)$ does not satisfy the growth constraint \eqref{renormalizing_function_constraint}, we approximate it by the function $\Gamma_K(u)$, $K\geq e^2$, 
\begin{align}\label{betaK}
\Gamma_K(u):=\left\{\begin{array}{rr}u(\widetilde{\log}u)^2,\quad u\leq K;\\
(2+\log K)u\log u -2K\log K, \quad u>K.\end{array}\right.
\end{align}
One can check that the function $\Gamma_K$ is convex and satisfies the properties \eqref{renormalizing_function_constraint} 

\begin{align}
\Gamma_K''(u)\geq 2\frac{\log u}{u}\mathbf{1}_{e\leq u\leq K}+(2+\log K)\frac{1}{u}\mathbf{1}_{u>K}\geq \frac{\widetilde{\log}_K u}{u}\mathbf{1}_{u\geq e}\geq 0,\label{beta_K''_lower_bound}\\
{|\Gamma'_K(u)u-\Gamma_K(u)|\leq 2 u\widetilde{\log}u\mathbf{1}_{u\leq K}+4\log K u\mathbf{1}_{u>K}\leq C_K(1+u)}.\label{beta'u-beta_upper_bound}
\end{align}
Now we estimate the time evolution of $\sum_\al\int \Gamma_K(n_\al)dx$ using the renormalization relation \eqref{Renormalized relation}, the positivity of $b_{\al\beta}$, \eqref{beta_K''_lower_bound}, \eqref{beta'u-beta_upper_bound} and the definition of $\widetilde{\log}, \widetilde{\log}_K$ as follows
\begin{align}
\sum_\al \int \Gamma_K(n_\al(t_1))dx&+\sum_\al\int_{t_0}^{t_1}\int \frac{\widetilde{\log}_K(n_\al)}{n_\al}\mathbf{1}_{n_\al\geq e}|\na(n_\al)|^2dxds\nonumber\\
\leq &\sum_\al \int \Gamma_K(n_\al(t_0))dx\label{H2evolution_1} \\
& +\sum_{\al,\beta}|b_{\al\beta}|\int_{t_0}^{t_1}\int\left(2 n_\al \widetilde{\log } n_\al \mathbf{1}_{n_\al\leq K}+4\log K n_\al \mathbf{1}_{n_\al> K}\right) n_\beta dxds\nonumber\\
\leq &\sum_\al \int \Gamma_K(n_\al(t_0))dx+4\sum_{\al,\beta} |b_{\al\beta}| \int_{t_0}^{t_1}\int n_\al\widetilde{\log}_K n_\al n_\beta dxds.\nonumber
\end{align}
Now picking a constant $A \in[e, K]$, we estimate the last term on the right hand side of \eqref{H2evolution_1} using GNS inequality as follows:
\begin{align}
\sum_{\al,\beta}&|b_{\al\beta}|\int n_\al\widetilde{\log }_K{n_\al}n_\beta dx\nonumber\\
=&\sum_{\al,\beta}|b_{\al\beta}|\left(\int n_\al\widetilde{\log}_Kn_\al n_\beta \mathbf{1}_{n_\beta\geq A}dx+\int n_\al\widetilde{\log }_Kn_\al n_\beta\mathbf{1}_{n_\beta\leq A} dx\right)\nonumber\\
\leq& \sum_{\al,\beta}|b_{\al\beta}|\left(\int\frac{(n_\al \widetilde{\log}_K n_\al)( n_\beta \widetilde{\log}_K n_\beta)}{\log A} dx+A\int n_\al \widetilde{\log}_K n_\al dx\right)\nonumber\\
\leq& 2\max_\al\left(\sum_\beta |b_{\al\beta}|\right)\sum_{\al}\left(\frac{1}{\log A}\int\bigg(\sqrt{n_\al \widetilde{\log}_K n_\al}\bigg)^4 dx+A(M_\al+S_+[n_\al])\right)\label{H2evolution_2}\\
\leq & 2C_{GNS}^2\max_\al\left(\sum_\beta |b_{\al\beta}|\right)\times\sum_{\al}\Bigg(A(M_\al+S_+[n_\al])\nonumber \\
& \hspace*{2.2cm}+\frac{1}{\log A}\Big(\int n_\al \widetilde{\log}_K n_\al dx\Big)\cdot \Big(\int\bigg|\na\sqrt{n_\al\widetilde{\log}_K n_\al}\bigg|^2dx\Big) \Bigg)\nonumber\\
\leq & 2C_{GNS}^2\max_\al\left(\sum_\beta |b_{\al\beta}|\right)\times
 \sum_{\al}\Bigg (A(M_\al+S_+[n_\al])\nonumber \\
& \hspace*{2.2cm}+\frac{1}{\log A}\Big(M_\al+S_+[n_\al]\Big)\cdot\Big(\int\frac{|\na(n_\al)|^2}{n_\al}\widetilde{\log}_Kn_\al\mathbf{1}_{n_\al\geq e}dx+F[n_\al]\Big)\Bigg).\nonumber
\end{align}
Now combining \eqref{H2evolution_1} and \eqref{H2evolution_2} and taking $K$ then $A$ large, we have the estimate
\begin{align*}
\sum_\al \int \Gamma_K(n_\al(t_1))dx\leq & \sum_\al \int \Gamma_K(n_\al(t_0))dx\\ &+2TC_{GNS}\max_\al\left(\sum_\beta |b_{\al\beta}|\right)\sum_\al A(M_\al+S_+[n_\al]) +4\sum_\al\int_{t_0}^{t_1}F[n_\al]ds.
\end{align*}
By the Lemma \ref{Lem:Renormalized relation}, we have that the estimate \eqref{EQ:H2entropy_estimate} holds with the constant $C_{S_2}$ depending on $T, \mathcal{A}_T$ and $\sup_{0\leq t\leq T}V[\mathbf{n}(t)]$.

\textbf{Step 2:} As in \cite{EganaMischler16}, we define the following renormalization function $\gamma_K$, $K\geq e$ approximating $(\cdot )^p$:
\begin{align}\label{betaK_Lp}
\gamma_K(u):=\left\{\begin{array}{rr}\ba\displaystyle&\frac{u^p}{p},\quad u\leq K;\\
\displaystyle&\frac{K^{p-1}}{\log K}(u\log u-u) -\frac{p-1}{p}K^p+\frac{K^p}{\log K}, \quad u>K.\ea\end{array}\right.
\end{align}
We can estimate the $|\gamma_K'(u)u-\gamma_K(u)|$ as follows
\begin{align*}
{|\gamma_K'(u)u-\gamma_K(u)|\leq \frac{p-1}{p}u^p\mathbf{1}_{u\leq K}+2K^{p-1}u\mathbf{1}_{u>K}}.
\end{align*}
Applying this estimate in the \eqref{Renormalized relation} yields
\begin{align}
\sum_\al\int \gamma_K(n_\al(t_1))dx+&\sum_\al\frac{4(p-1)}{p^2}\int_{t_0}^{t_1}\int |\na (n_\al^{p/2})|^2\mathbf{1}_{n_\al\leq K}dxds\nonumber \\
& +\frac{K^{p-1}}{\log K}\sum_\al\int_{t_0}^{t_1}\int \frac{|\na n_\al|^2}{n_\al}\mathbf{1}_{n_\al\geq K}dxds\nonumber\\
\leq & \sum_\al\int\gamma_K(n_\al(t_0))dx+\frac{p-1}{p}\sum_{\al,\beta}|b_{\al\beta}|\int_{t_0}^{t_1}\int n_\al^p\mathbf{1}_{n_\al\leq K}n_\beta dxds\label{Lp_proof_1}\\
& +2K^{p-1}\sum_{\al,\beta}|b_{\al\beta}|\int_{t_0}^{t_1}\int n_\al \mathbf{1}_{n_\al> K}n_\beta dxds\nonumber\\
=: &\sum_\al\int\gamma_K(n_\al(t_0))dx+T_1+T_2.\nonumber 
\end{align}
For the second term $T_1$ on the right hand side of \eqref{Lp_proof_1}, we decompose it as follows:
\begin{align}
T_1=&
\frac{p-1}{p}\sum_{\al,\beta}|b_{\al\beta}|\int_{t_0}^{t_1}\int n_\al^p\mathbf{1}_{n_\al\leq K}n_\beta(\mathbf{1}_{n_\beta\leq K}+\mathbf{1}_{n_\beta>K}) dxds\nonumber\\
\leq &\frac{p-1}{p}\max_\al\left(\sum_\beta |b_{\al\beta}|\right)\sum_{\al}\int_{t_0}^{t_1}\int n_\al^{p+1}\mathbf{1}_{n_\al\leq K}dxds\label{Lp_proof_2}\\
 & +\frac{K^{p-1}(p-1)}{p}\sum_{\al,\beta}|b_{\al\beta}|\int_{t_0}^{t_1}\int n_\beta^2\mathbf{1}_{n_\beta>K}dxds\nonumber\\
=:&T_{11}+T_{12}\nonumber 
\end{align}
The treatment of the $T_{11}$ term is similar to the corresponding one in the proof of Lemma \ref{L_p_bound_n_al}. It can be estimated using the Gagliardo-Nirenberg-Sobolev inequality, Chebyshev inequality and a classical vertical truncation technique with truncation level $A\in (0,K)$ as follows:
\begin{align}
T_{11}=&\frac{p-1}{p}\max_\al\left(\sum_\beta |b_{\al\beta}|\right)\sum_{\al}\int_{t_0}^{t_1}\int \bigg(\min\{n_\al, A\}+(n_\al\mathbf{1}_{n_\al\leq K}-A)_+\bigg)^{p+1}dxds\nonumber\\
\leq &\frac{2^p(p-1)}{p}A^p\max_\al\left(\sum_\beta |b_{\al\beta}|\right)\sum_{\al}M_\al(t_1-t_0)\nonumber \\
& +\frac{2^p(p-1)}{p}\max_\al\left(\sum_\beta |b_{\al\beta}|\right)\sum_{\al}\iint \bigg(n_\al\mathbf{1}_{n_\al\leq K}-A\bigg)_+^{p+1}dxds\label{T11}\\
\leq & \frac{2^p(p-1)}{p}A^p\max_\al\left(\sum_\beta |b_{\al\beta}|\right)\sum_{\al}M_\al(t_1-t_0)\nonumber \\
& +\max_\al\left(\sum_\beta |b_{\al\beta}|\right)\sum_{\al}\frac{C_{GNS}S_+[n_\al]}{\log A}\iint|\na(n_\al^{p/2})|^2\mathbf{1}_{n_\al\leq K}dxds.\nonumber
\end{align} Here we can see that if we choose $K$ then $A$ large enough, the second term can be absorbed by the dissipative term on the left hand side of \eqref{Lp_proof_1}. The second term $T_{12}$ in \eqref{Lp_proof_2} has a different flavor. Here the improved integrability of the solution  \eqref{EQ:H2entropy_estimate} is applied to gain extra smallness on this nonlinear term. Similar to the paper \cite{EganaMischler16}, we apply the bound \eqref{EQ:H2entropy_estimate}, the Sobolev inequality and Cauchy-Schwarz inequality to estimate the $T_{12}$ term in \eqref{Lp_proof_2} as follows:
\begin{align}
T_{12}\leq& \frac{4K^{p-1}(p-1)}{p}\sum_{\al,\beta}|b_{\al\beta}|\int_{t_0}^{t_1}\int \left(n_\beta-{K}/{2}\right)_+^2dxds\nonumber\\
\leq&\frac{4C_SK^{p-1}(p-1)}{p}\sum_{\al,\beta}|b_{\al\beta}|\int_{t_0}^{t_1}\bigg(\int |\na(n_\beta-{K}/{2})_+|dx\bigg)^2ds\nonumber\\
\leq&\frac{4C_SK^{p-1}(p-1)}{p}\sum_{\al,\beta}|b_{\al\beta}|\int_{t_0}^{t_1}\left(\int \frac{|\na n_\beta|^2}{n_\beta}(\mathbf{1}_{K\geq n_\beta\geq K/{2}}+\mathbf{1}_{n_\beta> K})dx\right)\left(\int n_\beta\mathbf{1}_{n_\beta\geq{K}/{2}}\right)ds\nonumber\\
\leq&\sum_{\al,\beta}|b_{\al\beta}|\frac{32(p-1) C_S\sup_{t_0\leq t\leq t_1}S_2[n_\beta(t)]}{p(\log K)^2}\bigg(\frac{2^{p+1}}{p^2}\int_{t_0}^{t_1}\int |\na(n_\beta^{p/2})|^2\mathbf{1}_{K/2\leq n_\beta\leq K}dxds\nonumber\\
&+K^{p-1}\int_{t_0}^{t_1}\int \frac{|\na n_\beta|^2}{n_\beta}\mathbf{1}_{n_\beta>K}dxds\bigg).\label{T12}
\end{align}
Since $S_2$ is bounded on the time interval $[t_0,t_1]$ \eqref{EQ:H2entropy_estimate}, if $K$ is large enough, these terms can be absorbed by the left hand side of \eqref{Lp_proof_1}.

For the last term $T_2$ on the right hand side of \eqref{Lp_proof_1}, applying the symmetry of the matrix $\mathbf{B}$ \eqref{b symmetric}, H\"older inequality and the Young's inequality,  we can estimate it as follows
\begin{align}
T_2=&2K^{p-1}\sum_{\al,\beta}\int_{t_0}^{t_1}\int n_\al \mathbf{1}_{n_\al> K}|b_{\al\beta}|n_\beta(\mathbf{1}_{n_\beta> K}+\mathbf{1}_{n_\beta\leq K})dxds\nonumber\\
\leq&4K^{p-1}\max_\al\left(\sum_\beta |b_{\al\beta}|\right)\sum_{\al}\int_{t_0}^{t_1}\int n_\al^2\mathbf{1}_{n_\al>K} dxds.\label{T2}
\end{align}
Now they are similar to the $T_{12}$ term in \eqref{Lp_proof_2} and we skip the treatment for the sake of brevity.

Combining the estimates \eqref{Lp_proof_2}, \eqref{T11} and \eqref{T2}, we have from \eqref{Lp_proof_1} that
\begin{align*}
\sum_\al&\int \gamma_K(n_\al(t_1))dx+\sum_\al\frac{2(p-1)}{p^2}\int_{t_0}^{t_1}\int |\na (n_\al^{p/2})|^2\mathbf{1}_{n_\al\leq K}dxds\nonumber\\
\leq &\sum_\al\int\gamma_K(n_\al(t_0))dx+2^p A^p \max_\al\left(\sum_\beta |b_{\al\beta}|\right)\sum_{\al}M_\al T.
\end{align*}
Now we can take $A$ fixed and $K$ to infinity to complete the proof of the lemma.

\end{proof}
Next, arguing along the lines of  \cite{EganaMischler16}, we end up with the conclusion that free energy solutions are classical solution for all positive time. We quote 
\begin{lem}[\cite{EganaMischler16}]\label{lem:smoothness}
Any physically relevant free energy solutions $(n_\al)_{\al\in \mathcal{I}}$ to \eqref{EQ:KS_multiple_groups} are smooth for any strictly positive time, i.e.,
\begin{equation}
n_\al\in C^\infty((\delta,T_\star)\times \rr^2),\quad \forall \delta>0.
\end{equation}
\end{lem}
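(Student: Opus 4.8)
The plan is to run a parabolic bootstrap that turns the $L^p$ bounds just obtained into full $C^\infty$ regularity, treating the first equation in \eqref{EQ:KS_multiple_groups} as a drift--diffusion equation whose coefficients are read off from the $c_\al$'s. The input is the previous lemma together with the hypercontractivity estimate (Lemma \ref{lem:Hypercontractivity_estimate}): for every $\delta>0$ and every $p\in[2,\infty)$ one has $\sum_\al|n_\al|_{L^\infty_t([\delta,T];L^p_x)}<\infty$. Since $\delta$ is arbitrary it suffices to establish smoothness on $[\delta,T_\star)$, so all estimates below are interior --- local in space and bounded away from $t=0$ --- which is exactly what the statement $(\delta,T_\star)\times\rr^2$ requires.

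First I would upgrade the chemical potentials. Because $-\de c_\al=\sum_\beta b_{\al\beta}n_\beta\in L^p_{loc}$ for every finite $p$, elliptic (Calder\'on--Zygmund) regularity gives $c_\al\in W^{2,p}_{loc}$ for all $p$, and in the plane the embedding $W^{2,p}\hookrightarrow C^{1,\gamma}$ ($p>2$) yields $\na c_\al\in L^\infty_{loc}$, while $\de c_\al\in L^p_{loc}$ for all $p$. Writing the equation in divergence form $\pa_t n_\al=\de n_\al-\na\cdot(n_\al\na c_\al)$, the inhomogeneity $-n_\al\na c_\al$ lies in $L^p_{loc}$ for every $p$ (a product of $L^q$ and $L^\infty$), and the parabolic $L^p$ estimate for a divergence-form right-hand side gives one derivative, $\na n_\al\in L^p_{loc}$ for all $p<\infty$. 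Armed with this I rewrite the equation in non-divergence form, $\pa_t n_\al=\de n_\al-\na c_\al\cdot\na n_\al-(\de c_\al)n_\al$, whose right-hand side now lies in $L^p_{loc}$ for every finite $p$ (the first-order term because $\na c_\al\in L^\infty$ and $\na n_\al\in L^p$, the zeroth-order term as a product of $L^q$ factors). Parabolic maximal $L^p$ regularity then delivers $n_\al\in W^{2,1}_{p,loc}$, i.e.\ $\pa_t n_\al,\na^2 n_\al\in L^p_{loc}$ for all finite $p$, and the parabolic Sobolev embedding $W^{2,1}_p\hookrightarrow C^{1+\gamma,(1+\gamma)/2}$ (valid for $p>4$) makes each $n_\al$ H\"older continuous in space and time.

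From here the bootstrap is classical. H\"older continuity of the $n_\beta$'s forces $\de c_\al=-\sum_\beta b_{\al\beta}n_\beta$ to be H\"older, so elliptic Schauder gives $c_\al\in C^{2,\gamma}_{loc}$ and $\na c_\al\in C^{1,\gamma}_{loc}$; the drift--diffusion equation for $n_\al$ now has H\"older coefficients, and parabolic Schauder theory promotes $n_\al$ to $C^{2+\gamma,1+\gamma/2}_{loc}$, with matched space--time H\"older regularity. Differentiating the equations in space and iterating this elliptic/parabolic Schauder loop --- at each stage the $c_\al$'s inherit one more derivative than the $n_\beta$'s, which in turn gains a further derivative for $n_\al$ --- produces spatial derivatives of every order, while the matched parabolic scaling simultaneously yields the time regularity. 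This gives $n_\al\in C^\infty((\delta,T_\star)\times\rr^2)$ for all $\delta>0$, reproducing the argument of \cite{EganaMischler16}.

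The main obstacle is the very first genuine gain of smoothness --- passing from mere $L^p$ integrability to H\"older continuity --- because the zeroth-order coefficient $\de c_\al$ is only in $L^p$ and, crucially, the several species are coupled through the elliptic problems defining the $c_\al$'s. One must therefore run the $L^p$ parabolic estimates for the whole vector $\mathbf{n}$ at once, exploiting the uniform-in-$\al$ bounds from the previous lemma, rather than one species in isolation. Once H\"older continuity is reached the coupling becomes harmless and the Schauder iteration closes exactly as in the single-species theory.
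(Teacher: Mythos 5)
Your bootstrap is precisely the ``standard parabolic technique'' the paper has in mind: the paper does not prove this lemma itself but quotes it from \cite{EganaMischler16} immediately after establishing the $L^p$ bounds, and the chain you describe (Calder\'on--Zygmund regularity for $c_\al$, divergence-form then non-divergence-form parabolic $L^p$ estimates, the parabolic Sobolev embedding into H\"older spaces, then the coupled elliptic/parabolic Schauder loop) is exactly that deferred argument, including the correct observation that the coupling forces the first H\"older gain to be run for the whole vector $\mathbf{n}$ at once. One correction on your input, however: the hypercontractivity estimate (Lemma \ref{lem:Hypercontractivity_estimate}) is proved for the \emph{regularized} system \eqref{EQ:KS_multiple_groups_regularized}, and at this stage of the paper a physically relevant free energy solution is not known to arise as a limit of regularized solutions (uniqueness is proved only \emph{after} smoothness), so you cannot invoke that lemma here without circularity. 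The correct source of the starting $L^p$ bound is the Fisher information estimate (Lemma \ref{Lem:I_time_integral}): since $\sum_\al\int_0^T F[n_\al(t)]\,dt<\infty$, for almost every $t_0\in(0,\delta)$ one has $F[n_\al(t_0)]=4|\na\sqrt{n_\al(t_0)}|_2^2<\infty$, and the Gagliardo--Nirenberg--Sobolev inequality \eqref{GNS_Lp} applied to $\sqrt{n_\al}$ gives $|n_\al(t_0)|_p\lesssim |\na\sqrt{n_\al(t_0)}|_2^{2-2/p}M_\al^{1/p}<\infty$; feeding such a $t_0$ into the $L^p$-propagation lemma preceding this one yields the $L^\infty_t([\delta,T];L^p_x)$ bound your bootstrap needs. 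With that substitution your argument is correct and is essentially the paper's intended proof.
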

Moreover, we have the following lower semicontinuity of the free energy functional.
\begin{lem}[\cite{EganaMischler16}]\label{E_Lower_semi_continuous}
Consider any bounded sequences $(n_{\al,k})_{\al\in \mathcal{I}}$ of nonnegative functions in $L^1_+(\rr^2)$ with finite second moment $\sum_\al\int n_{\al,k}|x|^2dx<\infty$. Assume that $\{n_{\al,k}\}_{k=1}^\infty$ has the same subcritical masses as $n_\al$, i.e., $|n_{\al, k}|_1=M_\al$, $\forall \al \in \mathcal{I},$ $\forall k\in \mathbb{N}$. If there exists a constant $C$ such that the free energy $E[(n_{\al,k})_{\al\in\mathcal{I}}]$ is uniformly bounded in $k$, i.e., $\sup_k E[(n_{\al,k})_{\al\in \mathcal{I}}]\leq C<\infty$, and $\{n_{\al,k}\}_{k=1}^\infty$ converges to $n_\al$ in $\mathcal{D}'(\rr^2)$ for all $\al\in \mathcal{I}$, there holds
\begin{equation}
n_\al\in L_+^1(\rr^2), \quad \int n_\al|x|^2dx<\infty, \quad\forall\al\in \mathcal{I} \quad \text{and}\quad E[(n_\al)_{\al\in \mathcal{I}}]\leq \liminf_{k\ra\infty} E[(n_{\al,k})_{\al\in \mathcal{I}}].
\end{equation}
\end{lem}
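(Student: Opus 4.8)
The plan is to follow the strategy of \cite{BlanchetEJDE06} and the proof of Proposition \ref{Prop_1}: first upgrade the stated hypotheses to a uniform $L\log L$ bound, then split $E$ into a piece that is lower semicontinuous by convexity and a piece that converges outright. First I would extract a uniform entropy bound. Exactly as in Proposition \ref{Prop_1}, split $b_{\al\beta}=(b_{\al\beta})_+-(b_{\al\beta})_-$ and decompose the logarithmic kernel across $\{|x-y|\geq 1\}$ and $\{|x-y|<1\}$. The $(b_{\al\beta})_-$-contribution on $\{|x-y|<1\}$ has a favorable sign and is discarded, while on $\{|x-y|\geq 1\}$ it is dominated by second-moment terms of the form $M_\al V_\beta+M_\beta V_\al$; the $(b_{\al\beta})_+$-contribution is grouped with the entropy into the functional $\Psi$ of \eqref{Psi R2}, which is bounded below by Theorem \ref{thm:log HLS for systems} under the subcritical mass hypothesis. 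Together with the uniform second-moment bound, these estimates convert $\sup_k E[\mathbf{n}_k]\le C$ into a uniform bound $\sup_k\sum_\al\int n_{\al,k}\log^+ n_{\al,k}\,dx\le C'$; the negative part $\int n_{\al,k}\log^- n_{\al,k}$ is controlled by the second moment through \eqref{negative_part_of_entropy}, so the full $L\log L$ norms are uniformly bounded in $k$.

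Next I would upgrade distributional convergence to weak $L^1$ convergence and identify the limit. The uniform $L\log L$ bound gives uniform integrability, while the uniform second-moment bound gives tightness; hence by the Dunford--Pettis theorem a subsequence of $n_{\al,k}$ converges weakly in $L^1(\rr^2)$, and uniqueness of distributional limits forces this weak limit to be $n_\al$. Consequently $n_\al\in L^1_+(\rr^2)$ with $|n_\al|_1=M_\al$, and $\int n_\al|x|^2\,dx\le\liminf_k\int n_{\al,k}|x|^2\,dx<\infty$ by Fatou. The entropy term is then lower semicontinuous: $s\mapsto s\log s$ is convex and bounded below, so weak $L^1$ convergence together with the uniform control of the negative part yields $\sum_\al\int n_\al\log n_\al\,dx\le\liminf_k\sum_\al\int n_{\al,k}\log n_{\al,k}\,dx$.

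For the interaction term I would instead prove outright convergence. Weak $L^1$ convergence of each factor, with uniform integrability, yields weak $L^1$ convergence of the tensor products $n_{\al,k}\otimes n_{\beta,k}\rightharpoonup n_\al\otimes n_\beta$ on $\rr^2\times\rr^2$. Writing $\log|x-y| = k(x,y)+\tfrac12\log(1+|x|^2)+\tfrac12\log(1+|y|^2)$ with $k(x,y):=\log\big(|x-y|/\sqrt{(1+|x|^2)(1+|y|^2)}\big)\le\tfrac12\log2$, the two weight terms reduce to $M_\beta\int n_{\al,k}\log(1+|x|^2)\,dx$-type quantities that converge by the second-moment control, and the $k$-term is handled by truncating the kernel to a bounded continuous one. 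The near-diagonal singular remainder is uniformly small because the uniform $L\log L$ bound forbids concentration via the superlevel-set mass estimate \eqref{eta(K)}, and the tail at infinity is uniformly small by the second moment; passing to the limit in the truncated kernel and then removing the truncation gives convergence of $\sum_{\al,\beta}\tfrac{b_{\al\beta}}{4\pi}\iint n_{\al,k}\log|x-y|n_{\beta,k}\,dx\,dy$. Combining the lower semicontinuity of the entropy with the convergence of the interaction yields $E[\mathbf{n}]\le\liminf_k E[\mathbf{n}_k]$.

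The main obstacle I expect is the interaction term: making the passage to the limit in the doubly-singular bilinear form rigorous requires simultaneously defeating the logarithmic singularity on the diagonal (using the entropy bound to rule out concentration) and the logarithmic growth at spatial infinity (using the second-moment bound), uniformly in $k$, so that the truncation errors vanish before the limit is taken. The entropy bound derived in the first step is precisely what makes both controls available.
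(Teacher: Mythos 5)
First, a point of comparison: the paper does not prove this lemma at all --- it is imported verbatim from \cite{EganaMischler16} --- so your proposal can only be judged against the standard literature argument, not against anything in this paper. Judged that way, your architecture is the right one, and the genuinely hard part is handled correctly. Step one (uniform $L\log L$ bound from the free-energy bound plus Theorem \ref{thm:log HLS for systems}, run exactly as in Proposition \ref{Prop_1}) and step two (Dunford--Pettis, identification of the limit, Fatou for the moments) are sound. For the interaction term, proving outright convergence rather than semicontinuity is indeed the correct move, since the interaction by itself is only \emph{upper} semicontinuous under weak convergence; your Carlen--Loss splitting $\log|x-y|=k(x,y)+\tfrac12\log(1+|x|^2)+\tfrac12\log(1+|y|^2)$ does the job: the weight terms converge by weak $L^1$ convergence plus the uniform second moments, the truncated kernel $\max(k,-L)$ is bounded continuous and passes to the limit against $n_{\alpha,k}\otimes n_{\beta,k}\rightharpoonup n_\alpha\otimes n_\beta$, and the truncation error is uniformly small: splitting both densities at height $K$, the pieces with one factor below $K$ are $O(KM\delta^2\log(1/\delta))$, while the high--high piece is bounded, via $st\le s\log s-s+e^t$ with $t=\log(1/|x-y|)$, by $\eta(K)$ from \eqref{eta(K)} times the bounded $L\log L$ norm of the other factor --- small for large $K$, uniformly in $k$ and $\delta$. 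One caveat of interpretation: every step (the $L\log L$ bound, tightness, the weight terms, and the entropy step below) needs the second moments to be bounded \emph{uniformly} in $k$; this is how the lemma is invoked in the proof of Theorem \ref{Theorem_of_smoothness_of_solutions}, where physical relevance supplies uniform moments, but you should state this reading of the hypothesis explicitly.

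There is one genuine, though repairable, gap: your justification of entropy lower semicontinuity. On $\rr^2$, ``$s\mapsto s\log s$ is convex and bounded below'' does not give weak-$L^1$ lsc of $\int n\log n$, and adding ``uniform control of the negative part'' does not fix it: the sharpest bound of that type, $\int n_{\alpha,k}\log^- n_{\alpha,k}\le \epsilon C_V+C/\epsilon$ (a weighted version of \eqref{negative_part_of_entropy}), combined with lsc of the convex nonnegative functional $\int n\log^+ n$, only yields $\liminf_k\int n_{\alpha,k}\log n_{\alpha,k}\ge \int n_\alpha\log n_\alpha-(\epsilon C_V+C/\epsilon)$, and optimizing in $\epsilon$ leaves an irremovable additive deficit $2\sqrt{CC_V}$ --- that is, lsc up to a constant, which is not the claim. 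The correct repair is standard: for each $\epsilon>0$, the augmented functional $\int n\log n\,dx+\epsilon\int n|x|^2\,dx$ equals $\int \phi(n/G_\epsilon)\,G_\epsilon\,dx$ with $\phi(s)=s\log s$ and $G_\epsilon=e^{-\epsilon|x|^2}$, i.e., a relative entropy with respect to a \emph{finite} reference measure, which is weakly-$L^1$ lower semicontinuous; hence $\liminf_k\int n_{\alpha,k}\log n_{\alpha,k}\ge \int n_\alpha\log n_\alpha+\epsilon\int n_\alpha|x|^2\,dx-\epsilon\sup_k\int n_{\alpha,k}|x|^2\,dx$, and letting $\epsilon\to0$ using the uniform moment bound gives exactly the lsc you need. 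With that substitution, and the uniform-moment reading of the hypotheses made explicit, your proof is complete.
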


\noindent
Equipped with lemma \ref{lem:smoothness} and \ref{E_Lower_semi_continuous} we turn to the following.
\begin{proof}[Proof of Theorem \ref{Theorem_of_smoothness_of_solutions}]
The smoothness of the solutions is proved in Lemma \ref{lem:smoothness}. The proof of the equality in \eqref{Free_energy_dissipation} is similar to the one in \cite{EganaMischler16}. For the sake of completeness, we detailed the proof as follows. 

Since the solution $n_\al,\al\in \mathcal{I}$ is smooth for all positive time, the following equality holds for all $t_n>0$, where $t_n\rightarrow 0^+$:
\begin{equation}
E[\mathbf{n}(t)]=E[\mathbf{n}(t_n)]+\sum_\al\int_{t_n}^tn_\al|\na \log n_\al -\na c_\al|^2dxds.
\end{equation}
Combining this with the Lebesgue dominated convergence theorem, the lower semi-continuity of the functional $E$ proven in the last lemma and the fact that $\mathbf{n}(t_n)$ converges to $\mathbf{n}_0$ weakly in $\mathcal{D}'(\rr^2)$, we have that
\begin{align}
E[\mathbf{n}_0]\leq& \liminf_{n\rightarrow 0}E[\mathbf{n}(t_n)]\leq \lim\left(E[\mathbf{n}(t)]+\sum_\al\int_{t_n}^t n_\al|\na\log n_\al-\na c_\al|^2dxds\right)\nonumber\\
=&E[\mathbf{n}(t)]+\sum_\al\int_{0}^t n_\al|\na\log n_\al-\na c_\al|^2dxds.
\end{align}
Recalling the definition of the free energy solution, the proof of the free energy dissipation equality is completed.
\end{proof}
\section{Uniqueness of the free energy solutions}
After proving the smoothness theorem for the system \eqref{EQ:KS_multiple_groups}, we are ready to prove the uniqueness of the physically relevant free energy solutions $(n_\al)_{\al\in \mathcal{I}}$. To estimate the deviation between two solutions on a small time interval, some smallness estimates are needed. The following lemma provides the functional space where we could seek for smallness.

\begin{lem}
Consider the physically relevant free energy solution $\mathbf{n}$ to the system \eqref{EQ:KS_multiple_groups}. The following holds
\begin{equation}\label{t1/4n4/3_0}
\lim_{t\rightarrow 0^+}t^{1/4}\sum_\al|n_\al(t)|_{4/3}=0.
\end{equation}
\end{lem}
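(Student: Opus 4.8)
The plan is to recast the free energy solution in its mild (Duhamel) form and then exploit the scaling of the $2$-d heat semigroup. By Lemma~\ref{lem:smoothness} the solution is smooth for $t>0$ and $n_\al(t)\to n_{\al0}$ in $\mathcal{D}'(\rr^2)$ with $|n_\al(t)|_1\equiv M_\al$, so I would first argue that each component satisfies
\[
n_\al(t)=e^{t\de}n_{\al0}-\int_0^t\na\cdot e^{(t-s)\de}\big(\na c_\al\, n_\al\big)(s)\,ds ,\qquad \al\in\mc I,
\]
obtained by writing the mild formula on $[\tau,t]$ and letting $\tau\to0^+$. Setting $\Phi(t):=\sum_\al|n_\al(t)|_{4/3}$ and $\phi(t):=t^{1/4}\Phi(t)$, the goal $\lim_{t\to0^+}\phi(t)=0$ splits into a linear and a nonlinear contribution.

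For the linear term I would use the $L^1\to L^{4/3}$ smoothing bound $|e^{t\de}f|_{4/3}\lesssim t^{-1/4}|f|_1$, so that $t^{1/4}|e^{t\de}n_{\al0}|_{4/3}\lesssim|n_{\al0}|_1$ is merely bounded. To upgrade this to a limit, I would use a density argument: for $g\in L^1\cap L^{4/3}$ one has $t^{1/4}|e^{t\de}g|_{4/3}\le t^{1/4}|g|_{4/3}\to0$, and since $L^1\cap L^{4/3}$ is dense in $L^1$, approximating $n_{\al0}$ by such $g$ and applying the uniform bound to the remainder gives $\phi_0(t):=t^{1/4}\sum_\al|e^{t\de}n_{\al0}|_{4/3}\to0$.

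For the nonlinear term I would chain three estimates. The HLS inequality for $\na c_\al=\na(-\de)^{-1}\big(\sum_\beta b_{\al\beta}n_\beta\big)$ gives $|\na c_\al|_4\lesssim\sum_\beta|b_{\al\beta}|\,|n_\beta|_{4/3}$; H\"older then yields $|\na c_\al\, n_\al|_1\lesssim\Phi(t)^2$; and the gradient heat bound $|\na e^{\tau\de}g|_{4/3}\lesssim\tau^{-3/4}|g|_1$ controls the propagator. Writing $\Phi(s)^2=s^{-1/2}\phi(s)^2$ and rescaling $s=t\sigma$ makes the resulting integral scale-invariant,
\[
t^{1/4}\int_0^t(t-s)^{-3/4}s^{-1/2}\phi(s)^2\,ds=\int_0^1(1-\sigma)^{-3/4}\sigma^{-1/2}\phi(t\sigma)^2\,d\sigma\le B\!\left(\tfrac12,\tfrac14\right)\psi(t)^2,
\]
where $\psi(t):=\sup_{0<s\le t}\phi(s)$ and the Beta factor is finite. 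Combining the two pieces gives the basic inequality $\phi(t)\le\phi_0(t)+C\,\psi(t)^2$, hence $\psi(t)\le\sup_{[0,t]}\phi_0+C\psi(t)^2$.

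The main obstacle is that the estimate is \emph{scale-critical}: boundedness of $\psi$ alone does not force $\psi(t)\to0$, since the quadratic inequality admits a nontrivial root. To close it I would inject smallness near $t=0$. The cleanest route is to construct, by a contraction mapping on the weighted space $\{\,\mathbf n:\sup_{0<t<T}t^{1/4}\sum_\al|n_\al(t)|_{4/3}<\infty\,\}$, a mild solution on a short interval $[0,T]$ whose weighted norm is small---smallness being available because $\phi_0$ is small for $T$ small (the linear input is controlled by $|n_{\al0}|_1$ through the dense-approximation step, splitting $n_{\al0}=g+h$ with $|h|_1$ as small as desired). Uniqueness (Theorem~\ref{Theorem_of_uniqueness}) then identifies this mild solution with the physically relevant free energy solution, so $\psi(T)$ is small; sending $T\to0$ gives \eqref{t1/4n4/3_0}. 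Finiteness of $\phi$ on $(0,T]$, needed to even define $\psi$, follows from the hypercontractivity bound of Lemma~\ref{lem:Hypercontractivity_estimate}. The remaining technical point is justifying the passage $\tau\to0^+$ in the Duhamel identity, i.e.\ integrability of the nonlinear kernel near $s=0$; this is exactly what the weighted-space framework supplies, removing the apparent circularity.
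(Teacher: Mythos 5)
Your Duhamel/weighted-space strategy is internally consistent up to the final identification step: the $L^1\to L^{4/3}$ heat bound with the density argument, the HLS/H\"older chain, and the Beta-function rescaling are all correct, and you correctly isolate the real difficulty, namely that $\psi(t)\le\sup_{[0,t]}\phi_0+C\psi(t)^2$ is scale-critical, so boundedness of $\psi$ does not yield smallness. The genuine gap is the device you use to close it: you invoke Theorem \ref{Theorem_of_uniqueness} to identify the small mild solution produced by contraction with the given physically relevant free energy solution. In this paper that theorem is \emph{proved using the present lemma}: the uniqueness argument of Section 5 introduces $Z_{\al,\ell}(t)=\sup_{0<s\le t}s^{1/4}|n_{\al,\ell}(s)|_{4/3}$ and closes its estimate precisely because \eqref{t1/4n4/3_0} forces $Z_{\al,\ell}(t)\to 0$ as $t\to0^+$. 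So your argument is circular as written. The circularity also cannot be removed by appealing instead to the uniqueness-in-a-class statement implicit in the contraction mapping: a difference estimate of the type used in Section 5 closes only when the weighted norms of \emph{both} solutions are small, and smallness of the weighted norm of the \emph{given} free energy solution is exactly the statement being proved. Two secondary issues: even granting an independent uniqueness theorem, you would still need to verify that the contraction-mapping solution is itself physically relevant (mass conservation, \eqref{Total_second_moment_evolution}, \eqref{A_t_Bound}, and the dissipation inequality), which is not addressed; and the finiteness of $\psi$ you draw from Lemma \ref{lem:Hypercontractivity_estimate} requires the quantitative rate $h_{4/3}(t)\lesssim t^{-1/4}$ near $t=0$, which the statement of that lemma does not supply.

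For contrast, the paper's proof avoids Duhamel entirely and uses only quantities controlled a priori for physically relevant solutions: the entropy bound $\sum_\al S_+[n_\al(t)]\le C_{L\log L}$ coming from \eqref{A_t_Bound}, and an $L^2$ decay bound $t\sum_\al|n_\al(t)|_2^2\le C_{L^2}$ obtained from an energy estimate plus the Nash inequality (comparison with the ODE $f'=-cf^2+C$, $f(0)=\infty$). The vanishing then follows from a logarithmically weighted interpolation,
\[
t^{1/3}\int n_\al^{4/3}\,dx\le\Big(\int n_\al(\log^+n_\al+2)\,dx\Big)^{2/3}\Big(t\int n_\al^2(2+\log^+n_\al)^{-2}\,dx\Big)^{1/3},
\]
where splitting the last integral at height $R=1/t$ shows it is $O\big((2+\log^+\tfrac1t)^{-2}\big)\to0$. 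That logarithmic gain is what converts boundedness into smallness --- the step your scale-critical inequality cannot produce. To salvage your route you would need a weak--strong uniqueness theorem, proved independently of this lemma, valid for physically relevant solutions with merely \emph{bounded} weighted norm; nothing of that kind is available in the paper.
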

\begin{proof}
The proof is similar to the one in the paper \cite{EganaMischler16}. Before estimating the norm $t^{1/4}|n_\al|_{4/3}$, we collect some estimates which we are going to use. It is enough to consider a short interval $[0,T]\subset [0,T_\star)$. From the assumptions \eqref{Total_second_moment_evolution}, \eqref{A_t_Bound} we have that the positive part of the entropy is bounded 
\begin{align*}
\sum_\al S_+[n_\al(t)]\leq C_{L\log L}<\infty,\quad \forall t\in [0,T].
\end{align*} 
Next we prove the estimate
\begin{equation}\label{t_L22_bound}
\sum_\al|n_\al(t)|_2^2t\leq C_{L^2}(\mathbf{B},\mathbf{M},|\mathcal{I}|, C_{L\log L})<\infty,\quad\forall t\in [0, T].
\end{equation}
Standard $L^2$ energy estimate yields
\begin{equation}
\frac{d}{dt}\sum_\al|n_\al|_2^2+2\sum_\al|\na n_\al|_2^2=\sum_{\al,\beta\in \mathcal{I}}b_{\al\beta}\int n_\al^2n_\beta dx.
\end{equation}
Applying the Nash inequality, Gagliardo-Nirenberg-Sobolev inequality and the vertical truncation technique applied in the proof of Lemma \ref{L_p_bound_n_al}, we estimate the right hand side as follows
\begin{align}
\frac{d}{dt}\sum_\al|n_\al|_2^2\leq&-\sum_\al |\na n_\al|_2^2+\sum_{\al,\beta}|b_{\al\beta}|\cdot|n_\beta|_3^3\nonumber\\
\leq&-\sum_\al|\na n_\al|_2^2+\sum_{\al,\beta}|b_{\al\beta}|\bigg(|n_\beta \mathbf{1}_{n_\beta\leq K}|_3^3+|n_\beta\mathbf{1}_{n_\beta\geq K}|_1^{1/3}|n_\beta\mathbf{1}_{n_\beta\geq K}|_4^{8/3}\bigg)\nonumber\\
\leq&-\sum_\al|\na n_\al|_2^2+\sum_{\al,\beta}|b_{\al\beta}|\bigg(K^2M_\beta+\frac{C_{GNS}\sup_{t\in[0,T]}S_+[\mathbf{n}(t)]^{1/3}}{(\log K) ^{1/3}}|n_\beta|_1^{2/3}|\na n_\beta|_2^{2}\bigg)\nonumber\\
\leq&-\sum_\al\left(1-\sum_\beta|b_{\al\beta}|\frac{C_{GNS}C_{L\log L}^{1/3}}{(\log K)^{1/3}}M_\al^{2/3}\right)|\na n_\al|_2^2+\sum_{\al,\beta}|b_{\al\beta}|K^2M_\beta\nonumber\\
\leq& -\frac{(\sum_\al|n_\al|_2^2)^2}{2C_{N}\max_\al M_\al^2|\mathcal{I}|}+\sum_{\al,\beta}|b_{\al\beta}|K^2M_\beta,\label{t1/4n4/3_0_1}
\end{align}
where $K$ is a large number chosen such that the coefficient of $|\na n_\al|_2^2$ is less than $-1/2$. Now by comparing $|n_\al|_2$ with the solution to the super equation
\begin{equation*}
\frac{d}{dt}f= -\frac{f^2}{2C_{N}\max_\al M_\al^2|\mathcal{I}|}+K^2\sum_{\al,\beta}|b_{\al\beta}| M_\beta,\quad f(0)=\infty,
\end{equation*} we obtain \eqref{t_L22_bound}.

Now we estimate the quantity $t^{1/4}|n_\al(t)|_{4/3}$. By the H\"older's inequality and the boundedness of the entropy, we have that
\begin{align}
\left(t^{1/4}| n_\al|_{4/3}\right)^{4/3}=&t^{1/3}\int n_\al^{4/3}dx\leq \left(\int n_\al(\log^+ n_\al+2)dx\right)^{2/3}\left(t\int n_\al^2(2+\log^+ n_\al)^{-2}dx\right)^{1/3}\nonumber\\
\leq& C(C_{L\log L},\mathbf{M})\left(t\int n_\al^2(2+\log^+ n_\al)^{-2}dx\right)^{1/3}.\label{L4/3}
\end{align}
To estimate the term in the parenthesis, we separate the integral into two parts and use the increasing property of the function $s/(2+\log^+s)^2$, the conservation of mass and \eqref{t_L22_bound} to estimate each piece
\begin{align*}
t\int n_\al^2(2+\log^+ n_\al)^{-2}dx\leq &t\int_{n_\al\leq R}n_\al^2(2+\log^+ n_\al)^{-2}dx+ t\int_{n_\al >R}n_\al^2(2+\log^+n_\al)^{-2}dx\\
\leq & t\frac{R}{(2+\log^+R)^2}\int_{n_\al\leq R}n_\al dx+\frac{t}{(2+\log^+R)^2}\int_{n_\al\geq R} n_\al^2dx\\
\leq & t\frac{MR}{(2+\log^+R)^2}+\frac{C_{L^2}}{(2+\log^+R)^2}.
\end{align*}
Now set $R:=1/t$, we have
\begin{equation}
t\int n_\al^2(2+\log^+ n_\al)^{-2}dx\leq\frac{M+C_{L^2}}{(2+\log^+1/t)^2}\rightarrow 0, \quad t\rightarrow 0_+.
\end{equation}
Combining this with \eqref{L4/3} yields the result.
\end{proof}

Now we prove the Theorem \ref{Theorem_of_uniqueness}. Consider the equation \eqref{EQ:KS_multiple_groups} in the mild form. Since we have smoothness of the free energy solution, we have that the two formulation are equivalent. Suppose that $(n_{\al,1})_{\al\in \mathcal{I}}, (n_{\al,2})_{\al\in\mathcal{I}}$ are two solutions subject to the same initial data $n_{\al 0}$, $\al\in\mathcal{I}$, their difference satisfies:
\begin{align*}
n_{\al,2}(t)-n_{\al,1}(t)=&-\int_0^t e^{(t-s)\de}\na\cdot\left((\na c_{\al,2}(s)-\na c_{\al,1}(s))n_{\al,2}(s)\right)ds\\
&-\int_0^t e^{(t-s)\de}\na \cdot\left(\na c_{\al,1}(s)(n_{\al,2}(s)-n_{\al,1}(s))\right)ds,\quad \forall\al\in\mathcal{I}.
\end{align*}
Define the following quantities:
\begin{align}
Z_{\al,\ell}(t):=&\sup_{0<s\leq t}s^{1/4}|n_{\al,\ell}(s)|_{4/3}, \quad \ell=\{1,2\};\\
\de_{\al}(t):=&\sup_{0<s\leq t}s^{1/4}|n_{\al,2}(s)-n_{\al,1}(s)|_{4/3},\quad \forall \al \in \mathcal{I}.
\end{align}
The estimate \eqref{t1/4n4/3_0} yields that $\lim_{t\ra 0_+}Z_{\al,\ell}(t)=0$. The $\de_\al(t)$ can be further decomposed as follows:
\begin{align}
\de_\al(T)
\leq& \sup_{0\leq t\leq T}t^{1/4}\bigg|{\int_0^t e^{(t-s)\de}\na\cdot((\na c_{\al,2}(s)-\na c_{\al,1}(s))n_{\al,2}(s))ds}\bigg|_{4/3}\nonumber\\
&+\sup_{0\leq t\leq T}t^{1/4}\bigg|{\int_0^t e^{(t-s)\de}\na\cdot (\na c_{\al,1}(s)(n_{\al,2}(s)-n_{\al,1}(s)))ds}\bigg|_{4/3}\nonumber\\
=:&\sup_{0\leq t\leq T}J_{\al,1}(t)+\sup_{0\leq t\leq T}J_{\al,2}(t).\label{J1J2}
\end{align}
Now we estimate the $J_{\al,2}$ term in \eqref{J1J2} using the H\"older inequality, Hardy-Littlewood-Sobolev inequality, Minkowski integral inequality and heat semigroup estimate as follows
\begin{align}
J_{\al,2}(t)\leq &t^{1/4}\int_0^t\frac{C}{(t-s)^{3/4}}|\na c_{\al,1}|_{4}|n_{\al,2}-n_{\al,1}|_{4/3}ds\nonumber\\
\leq&\int_0^t C\frac{t^{1/4}}{s^{1/2}(t-s)^{3/4}}ds\sum_{\beta\in \mathcal{I}}|b_{\al\beta}|Z_{\beta,1}(t)\de_\al(t)\nonumber\\
\leq&C\sum_{\beta\in \mathcal{I}}|b_{\al\beta}|Z_{\beta,1}(t)\de_\al(t)\label{J2}.
\end{align}
Similarly, we can estimate the $J_{\al,1}$ term as follows:
\begin{align}
J_{\al,1}(t)\leq C\sum_\beta |b_{\al\beta}|\de_\beta(t)Z_{\al,2}(t).\label{J1}
\end{align}
Combining \eqref{J1J2}, \eqref{J1}, \eqref{J2} and symmetry of $\mathbf{B}$ \eqref{b symmetric}, we have that
\begin{align*}
\sum_\al\de_\al(T)
\lesssim&\sum_{\al,\beta}|b_{\al\beta}|\sup_{0\leq t \leq T} Z_{\beta,1}(t)\de_\al(t)+\sum_{\al,\beta}|b_{\al\beta}|\sup_{0\leq t\leq T}\de_\beta(t)Z_{\al,2}(t)\\
\lesssim &\sum_{\al,\beta}| b_{\al\beta}|\sup_{0\leq t\leq T}\de_\al(t)(Z_{\beta,1}(t)+Z_{\beta,2}(t))\\
\lesssim &\max_{\al,\beta}|b_{\al\beta}|\sum_{\al}\de_\al(T)\left(\sum_{\beta}\sum_{\ell=1}^2 Z_{\beta,\ell}(T)\right).
\end{align*}
Now since $Z_{\beta,\ell} (t)$ approaches zero as time approaches $0_+$ \eqref{t1/4n4/3_0}, there exists a small time $T'$ such that
\begin{equation}
\sum_\al\de_\al(T')\leq \frac{1}{2}\sum_\al\de_\al(T'),\quad T'\in [0,T].
\end{equation}
So we have $\sum_\al\de_\al\equiv 0$, $\forall t\in[0,T']$. Now the uniqueness follows if we iterate this argument.
\section{Long time behavior of the free energy solutions}
In this section, we studied the long time behavior of the multi-species PKS system \eqref{EQ:KS_multiple_groups}. Since the solution becomes instantly smooth, we could assume that the initial data $n_{\al 0}$ is $C^\infty\cap L^1$ for all $\al\in \mathcal{I}$. We rewrite the equation \eqref{EQ:KS_multiple_groups} in the self-similar variables
\begin{align*}
X:=\frac{x}{R(t)},\quad \tau:=\log R(t),\quad R(t):=\sqrt{1+2t}.
\end{align*}
We define the solutions $N_\alpha, C_\al$ in the self-similar variables:
\begin{align}
n_\al(x,t)=\frac{1}{R^2(t)}N_\al(X,\tau),\quad c_\al(x,t)=C_\al(X,\tau).
\end{align}
Rewriting the equation \eqref{EQ:KS_multiple_groups} in the self-similar variables, we obtain that the $N_\al$, $C_\al$ satisfy the following equations subject to initial data $N_\al(X,\tau=0)(n_{\al 0}(X)$, $\forall\al \in \mathcal{I}$:
\begin{align}\label{EQ:KS_multiple_groups_self_similar_variable}\left\{\begin{array}{rr}\ba
\pa_\tau N_\al=&\de N_\al+\na\cdot(XN_\al)-\na\cdot(\na C_\al N_\al),\\
-\de C_\al=&\sum_{\beta\in\mathcal{I}}b_{\al\beta} N_\beta.
\ea\end{array}\right.
\end{align}
In order to prove Theorem \ref{Theorem_of_long_time_decay}, we show that the solution $N_\al$ to the equation \eqref{EQ:KS_multiple_groups_self_similar_variable} is uniformly bounded in time. This is due to the fact that the $L^2(dx)$ norm of solutions $n_\al$ to the original problem and the $L^2(dX)$ norm of the solutions $N_\al$ to the equation \eqref{EQ:KS_multiple_groups_self_similar_variable} have the following relation:
\begin{equation}
|n_\al|_{L^2(dx)}^2=\frac{|N_\al|_{L^2(dX)}^2}{R^2( t)}=\frac{|N_\al|_{L^2(dX)}^2}{1+2t}.
\end{equation}
Therefore any uniform in time bound of $|N_\al|_{L^2(dX)}$ can be translated to decay of $|n_\al|_{L^2(dx)}$. We decompose our proof into several lemmas. First we show that the second moment of the solutions are uniformly bounded in time.
\begin{lem}
Consider the solutions $N_\al$, $\al\in\mathcal{I}$ to the equation \eqref{EQ:KS_multiple_groups_self_similar_variable}. The total second moment is uniformly bounded in time, i.e.,
\begin{equation}\label{Second_moment_uniform_bound_self_similar}
\sum_{\al\in\mathcal{I}} \int N_\al(X,\tau)|X|^2dX\leq C_{V,R}<\infty,\quad \forall \tau\in[0,\infty).
\end{equation}
\end{lem}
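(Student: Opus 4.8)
The plan is to track the time evolution of the total second moment $V(\tau):=\sum_{\al}\int N_\al(X,\tau)|X|^2\,dX$ directly from the self-similar system \eqref{EQ:KS_multiple_groups_self_similar_variable}, mirroring the computation already carried out in the original variables \eqref{Total_second_moment_evolution}. First I would multiply the first equation of \eqref{EQ:KS_multiple_groups_self_similar_variable} by $|X|^2$, integrate over $\rr^2$, and integrate by parts term by term. There are exactly three contributions: the diffusion term $\int \de N_\al\,|X|^2\,dX=4M_\al$ (using $\de|X|^2=4$ together with mass conservation); the self-similar drift $\int \na\cdot(XN_\al)\,|X|^2\,dX=-2V_\al$, which is the genuinely new feature here, since this damping term is precisely what replaces the linear-in-time growth seen in \eqref{Total_second_moment_evolution}; and the chemotactic term, which after summing over $\al$ and symmetrizing under $(X,Y)\leftrightarrow(Y,X)$, $\al\leftrightarrow\beta$ — invoking the symmetry $b_{\al\beta}=b_{\beta\al}$ — collapses to the constant $-\tfrac{1}{2\pi}\sum_{\al,\beta}b_{\al\beta}M_\al M_\beta$, exactly as in the second-moment lemma of Section 2.

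Collecting the three terms yields the closed, constant-coefficient ODE
\[
\frac{d}{d\tau}V(\tau)=4\Big(1-\frac{Q_{\mathbf{B},\mathbf{M}}[\mathcal{I}]}{8\pi}\Big)\sum_{\al}M_\al-2V(\tau)=:A-2V(\tau),
\]
where $A$ is a fixed constant determined by the conserved masses $\mathbf{M}$ and the matrix $\mathbf{B}$. Solving this linear ODE (equivalently, applying Gr\"onwall) gives $V(\tau)=\tfrac{A}{2}+\big(V(0)-\tfrac{A}{2}\big)e^{-2\tau}$, whence $V(\tau)\leq\max\{V(0),A/2\}$ for every $\tau\geq0$. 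In the subcritical regime one has $Q_{\mathbf{B},\mathbf{M}}[\mathcal{I}]\leq Q_{\mathbf{B}_+,\mathbf{M}}[\mathcal{I}]<8\pi$, so $A>0$ and the bound $C_{V,R}:=\max\{V(0),A/2\}$ is finite and $\tau$-independent, which is exactly \eqref{Second_moment_uniform_bound_self_similar}. I would also note that the $-2V$ term forces boundedness irrespective of the sign of $A$: if $A\leq0$ then $V$ is non-increasing and $V(\tau)\leq V(0)$.

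The main obstacle is not the algebra but the rigorous justification of the integrations by parts: one must control the boundary contributions at spatial infinity and handle the singular chemotactic kernel. I would deal with this exactly as in the earlier second-moment estimates, working with the smooth physically relevant solutions supplied by Theorem \ref{Theorem_of_smoothness_of_solutions} together with the finite second-moment and entropy a priori bounds, and, if needed, inserting a smooth spatial cutoff $\varphi_R(X)$ approximating $|X|^2$ and passing to the limit $R\to\infty$ using the uniform moment and Fisher-information controls. The symmetrization step, which turns the singular double integral into the algebraic quantity $\sum_{\al,\beta}b_{\al\beta}M_\al M_\beta$, is the single place where the symmetry hypothesis \eqref{b symmetric} is essential and must be invoked.
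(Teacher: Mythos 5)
Your proposal is correct and follows essentially the same route as the paper: the paper likewise repeats the second-moment computation of Section~2 in the self-similar variables, obtaining $\frac{d}{d\tau}V(\tau)=-2V(\tau)+\bigl(\sum_\al 4M_\al\bigr)\bigl(1-\tfrac{Q_{\mathbf{B},\mathbf{M}}[\mathcal{I}]}{8\pi}\bigr)$, and concludes with the same bound $V(\tau)\leq\max\bigl\{V(0),\,A/2\bigr\}$ from this constant-coefficient ODE. Your additional remarks (explicit solution formula, the sign discussion for $A$, and the cutoff justification of the integrations by parts) are consistent with, and slightly more detailed than, the paper's two-line argument.
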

\begin{proof}
Similar to the proof of \eqref{time_evol_V}, we calculate the time evolution of the second moment
\begin{align*}
\frac{d}{d\tau}\sum_\al \int N_\al|X|^2dX=&-2\sum_\al\int N_\al |X|^2dX+\bigg(\sum_\al 4 M_\al\bigg)\bigg(1-\frac{Q_{\mathbf{B},\mathbf{M}}[\mathcal{I}]}{8\pi}\bigg).
\end{align*}
Now we see that the total second moment is bounded
\begin{align*}
\sum_\al\int N_\al|X|^2dX\leq \max\left\{\frac{1}{2}\bigg(\sum_\al 4 M_\al\bigg)\bigg(1-\frac{Q_{\mathbf{B},\mathbf{M}}[\mathcal{I}]}{8\pi}\bigg),\sum_\al\int (N_\al)_0|X|^2dX\right\}.
\end{align*}
\end{proof}
Similar to the proof of the estimate \eqref{Free_energy_dissipation}, we can show that the equation \eqref{EQ:KS_multiple_groups_self_similar_variable} has the following decreasing free energy for $\forall \tau\geq 0$:
\begin{align*}
E_{R}[\mathbf{N}(\tau)]=\sum_{\al\in\mathcal{I}}\int N_\al\log N_\al dX+&\sum_{\al,\beta\in\mathcal{I}}\frac{b_{\al\beta}}{4\pi}\iint \log|X-Y|N_\al(X)N_\beta(Y)dXdY\\
& +\frac{1}{2}\sum_{\al\in\mathcal{I}}\int N_\al |X|^2dX\leq E_{R}[\mathbf{N}_0].
\end{align*}
Now we apply the log-HLS inequality \eqref{log_HLS_for_system} to get a bound for the entropy, $S_R[\mathbf{N}]=\sum_\al\int N_\al\log N_\al dX$, obtaining
\begin{align*}
E_R[\mathbf{N}_0]\geq &E_R[\mathbf{N}]\nonumber\\
\geq&\sum_{\al\in \mathcal{I}}\int N_\al\log N_\al dX+\sum_{\al,\beta\in \mathcal{I}}\frac{(b_{\al\beta})_+}{4\pi}\int N_\al(X)\log|X-Y|N_\beta(Y)dXdY\nonumber\\
&-\sum_{\al,\beta}\frac{(b_{\al\beta})_-}{4\pi}\iint_{|X-Y|\geq 1}N_\al(X)\log|X-Y|N_\beta(Y)dXdY+\frac{1}{2}\int N_\al|X|^2dX\nonumber\\
=&(1-\theta)\sum_{\al\in\mathcal{I}}\int N_\al\log N_\al dX\nonumber\\
 & +\theta\left(\sum_{\al\in\mathcal{I}} \int N_\al\log N_\al dx+\frac{1}{4\pi}\sum_{\al,\beta\in \mathcal{I}}\frac{(b_{\al\beta})_+}{\theta}\iint N_\al(X)\log |X-Y| N_\beta(Y)dXdY\right)\nonumber\\
&-\sum_{\al,\beta}\frac{(b_{\al\beta})_-}{4\pi}(M_\al V_\beta+M_\beta V_\al)+\frac{1}{2}\int N_\al|X|^2dX\nonumber\\
\geq&(1-\theta)\sum_{\al\in\mathcal{I}}\int N_\al\log^+ N_\al dX-(1-\theta)\int N_\al\log^-N_\al dX-\theta C_{lHLS}(\mathbf{B},\mathbf{M})\\
&-\sum_{\al,\beta}\frac{(b_{\al\beta})_-}{4\pi}(M_\al V_\beta+M_\beta V_\al)+\frac{1}{2}\int N_\al|X|^2dX.
\end{align*}
Here the $\theta\in(0,1)$ is chosen as in the proof of Proposition \ref{Prop_1}. Now since the second moment is bounded for all time \eqref{Second_moment_uniform_bound_self_similar}, we have that $C_{lHLS}<\infty$ and the negative part of the entropy is uniformly bounded in time, i.e., $\displaystyle \int N_\al(X,\tau)\log^- N_\al(X,\tau) dX<C<\infty$ for $\forall \tau\in[0,\infty)$, which in term yields that
\begin{equation}\label{NlogN_bound}
\sum_{\al\in\mathcal{I}}\int N_\al(X,\tau)\log^+ N_\al(X,\tau) dX<C_{L\log L,R}<\infty,\quad \forall\tau \in[0,\infty).
\end{equation}
Once the positive part of the entropy is bounded, we estimate the time evolution $\sum_\al|(N_\al-K)_+|_2^2$ as in the proof Lemma \ref{L_p_bound_n_al}
\begin{align*}
\frac{1}{2}\frac{d}{dt}\sum_\al|(N_\al-K)_+|_2^2\leq&\left(-{3}+\eta(K)\max_\al\left(\sum_\beta |b_{\al\beta}|\right)C_{GNS}\right)\sum_\al \int |\na (N_\al-K)_+|^2dX\\
&+C(K,\mathbf{B},\mathbf{M})|(N_\al-K)_+|_2^2+C(K,\mathbf{B},\mathbf{M}),
\end{align*}
where $\eta(K)\leq \frac{C_{L\log L,R}}{\log K}$ is made small enough. Now we choose the $K$ large enough and apply the Nash inequality to get
\begin{align*}
\frac{d}{dt}\sum_\al|(N_\al-K)_+|_2^2\leq & -\frac{(\sum_\al|(N_\al-K)_+|_2^2)^2}{C_N\sum_\al|(N_\al-K)_+|_1^2|\mathcal{I}|}\\
& +C(K,\mathbf{B},\mathbf{M})\sum_\al|(N_\al-K)_+|_2^2+C(K,\mathbf{B},\mathbf{M}).
\end{align*}
Since $|(N_\al-K)_+|_1\leq|N_\al|_1=M_\al<\infty$, we have that $\sum_\al|(N_\al-K)_+|_2\leq C_{L^2,R}<\infty$ for $\forall \tau\in[0,\infty)$. This completes the proof of Theorem \ref{Theorem_of_long_time_decay}.

\section{Multi-species PKS subject to non-symmetric coupling arrays}
\subsection{Symmetrizable case}
In general,  the chemical generation coefficient matrix $\mathbf{B}$ is non-symmetric. This introduces new challenges in the analysis. We will not cover the general situation in this paper. However, in certain cases, one can symmetrize the system. First recall the $sign$ function:
\begin{equation}
sign (f)=\left\{\begin{array}{rrr} 1, \quad f>0;\\
0,\quad f=0;\\
-1,\quad f<0.
\end{array}\right.
\end{equation}
If $sign (b_{\al\beta})=sign (b_{\beta\al})$ and the matrix $\mathbf B$ is three diagonal, i.e., $b_{\al\beta}\neq 0$ only if $|\al-\beta|\leq 1$, the system can always be symmetrized. Specifically, all the two species models with $sign( b_{12})=sign (b_{21})$ are symmetrizable. To show the method, we consider system \eqref{EQ:KS_multiple_groups} subject to general 3-by-3 matrix
\begin{align*}
\pa_t {n}_\al&+\sum_{\beta\in \{1,2,3\}}\na \cdot(b_{\al\beta} (-\na\de^{-1})n_\beta n_\al)=\de n_\al,\quad \al\in\{1,2,3\},\\
\mathbf B=&\left[\begin{array}{rrr}
b_{11},\quad b_{12},\quad b_{13}\\
b_{21},\quad b_{22},\quad b_{23}\\
b_{31},\quad b_{32},\quad b_{33}
\end{array}\right],\quad sign (b_{\al\beta})=sign (b_{\beta\al}),\quad b_{13}=b_{31}=0.
\end{align*}
First we can multiply the equation of $n_2$ by $b_{12}/b_{21}$ and redefine $\tilde{n}_2:=\frac{b_{12}}{b_{21}}n_2$ to obtain
\begin{align*}
\pa_t {n}_1+ &\na \cdot(b_{11} (-\na\de^{-1})n_1n_1+ b_{21} (-\na\de^{-1})\tilde{n}_2 n_1)=\de n_1;\\
\pa_t \tilde{n}_2+ &\na \cdot\left( b_{21} (-\na\de^{-1}) n_1\tilde{n}_2+ \frac{b_{21}b_{22}}{b_{12}} (-\na\de^{-1})\tilde{n}_2\tilde{n}_2+ b_{23} (-\na\de^{-1})n_3\tilde{n}_2\right)=\de\tilde{n}_2.
\end{align*}
Now we can do the same trick on the third equation by multiplying it by $\frac{b_{12}b_{23}}{b_{32}b_{21}}$ and redefine $\tilde{n}_3:=\frac{b_{12}b_{23}n_3}{b_{32}b_{21}}$, we obtain  that
\begin{align*}
\pa_t \tilde{n}_2+ &\na \cdot\left( b_{21} (-\na\de^{-1}) n_1\tilde{n}_2+ \frac{b_{21}b_{22}}{b_{12}} (-\na\de^{-1})\tilde{n}_2\tilde{n}_2+ \frac{b_{32}b_{21}}{b_{12}} (-\na\de^{-1})\tilde{n}_3\tilde{n}_2\right)=\de\tilde{n}_2,\\
\pa_t \tilde{n}_3+ &\na \cdot\left( \frac{b_{32} b_{21}}{b_{12}}(-\na\de^{-1}) \tilde{n}_2\tilde{n}_3+ \frac{b_{32}b_{21}b_{33}}{b_{12}b_{23}} (-\na\de^{-1})\tilde{n}_3\tilde{n}_3\right)=\de\tilde{n}_3.
\end{align*}
Now we see that the new coefficient matrix is symmetric. For general tridiagonal matrix with $sign(b_{\al\beta})=sign(b_{\beta\al})$, the symmetrization is similar.

\begin{rmk}
This three diagonal chemical generation matrices $\mathbf{B}$'s correspond to the fact that there exists a hierarchical structure in the community, in which one species only communicates to their direct neighbors. 
\end{rmk}

\subsection{Essentially dissipative case}
In this section, we prove Theorem \ref{Theorem_essentially_negative_B}.
\begin{proof}[Proof of Theorem \ref{Theorem_essentially_negative_B}] First note that if $\mathcal{I}^{(|\mathcal{I}|)}=\mathcal{I}$, then $\mathcal{I}^{(0)}$ is not an empty set. Otherwise one obtain that $\mathcal{I}^{(|\mathcal{I}|)}$ is an empty set, which is a contradiction. We prove that $\sum_\al|n_{\al}(t)|_{L_t^\infty(0,\infty;H_x^s)}\leq C_{H^s}<\infty$.

First we prove the $L^\infty$ bound of the $n_{\al}$'s. We pick all the $\al^0\in \mathcal{I}^{(0)}$, and calculate the time evolution of the $|n_{\al^0}|_{2p}^{2p}$, $\forall p\in [1,\infty)$ utilising the fact that $b_{\al^0\beta}\leq 0$ for all $\beta\in \mathcal{I}$

\begin{align}
\frac{1}{2p}\frac{d}{dt}|n_{\al^0}|_{2p}^{2p}=&-\frac{2p-1}{p^2}|\na (n_{\al^0})^p|_2^2-\frac{2p-1}{2p}\int n_{\al^0}^{2p}\de c_{\al^0}dx\nonumber\\
=&-\frac{2p-1}{p^2}|\na n_{\al^0}|_2^2+\frac{2p-1}{2p}\sum_{\beta\in\mathcal{I}}b_{\al^0\beta}\int n_{\al^0}^{2p} n_{\beta}dx\leq0.
\label{n_al_0_L_2_bound}
\end{align}
As a result, for any $p\in [1,\infty)$, $|n_{\al^0}|_{2p}\leq |(n_{\al^0})_0|_{2p}$. Since the initial data is in $L^1\cap L^\infty$, we have that $\max_{\al^0\in\mathcal{I}^{(0)}}|n_{\al^0}|_{L_t\infty(0,\infty;L_x^\infty)}\leq C_{\mathcal{I}^{(0)}}<\infty$. Next we look at all the $\al^1$'s in the set $\mathcal{I}^{(1)}$. Calculating the time evolution of the $L^{2p}$ norm using the Nash inequality , we have that
\begin{align*}
\frac{1}{2p}\frac{d}{dt} |(n_{\al^1})^p|_{2}^{2}\leq&-\frac{2p-1}{p^2}|\na (n_{\al^1})^p|_2^2+\frac{2p-1}{2p}\sum_{\beta\in \mathcal{I}^{(0)}}b_{\al^1\beta}\int n_\beta n_{\al^1}^{2p}\\
\leq&-\frac{2p-1}{p^2}\frac{|(n_{\al^1})^p|_2^4}{C_N|(n_{\al^1})^p|_1^2}+\frac{2p-1}{2p}\sum_{\beta\in \mathcal{I}^{(0)}}b_{\al^1\beta}|n_\beta|_\infty|(n_{\al^1})^p|_2^2.
\end{align*}
Since $|n_{\beta}|_\infty<C_{\mathcal{I}^{(0)}}<\infty$, $\forall \beta\in\mathcal{I}^{(0)}$, we have that
\begin{align}
\sup_{t\in [0,\infty)}|n_{\al^1}|_{2p}^{2p}\leq \max\{p C_N\sup_{t\in[0,\infty)}|n_{\al^1}|_{p}^{2p}\sum_{\beta\in\mathcal{I}^{(0)}}|b_{\al^1\beta}| C_{\mathcal{I}^{(0)}}, |(n_{\al^1})_0|_{2p}^{2p}\}.
\end{align}
Since $|n_{\al^1}|_{L^1}=M_{\al^1}<\infty$ and $|(n_{\al^1})_0|_{L^\infty}<\infty$, by the Moser-Alikakos iteration, we have that $|n_{\al^1}|_\infty\leq C_{\mathcal{I}^{(1)}}<\infty$. By the same argument, we have that
\begin{align}
 \sup_{t\in[0,\infty)}|n_{\al}(t)|_\infty\leq C_{\infty}<\infty, \quad \forall \al \in \mathcal{I}^{(|\mathcal{I}|)}
\end{align}
Since $\mathbf{B}$ is essentially dissipative, $\mathcal{I}^{(|\mathcal{I}|)}=\mathcal{I}$, we have that $|n_{\al}|_{L_t^\infty(0,\infty;L_x^\infty)}\leq C_\infty$ for all $\al \in \mathcal{I}$.

Next we estimate the $H^s$ ($2\leq s\in \mathbb{N}$) norms of the solutions. Assume that we have already obtained the $H^{s-1}$ estimate, i.e.,
\begin{equation}
|n_{\al}(t)|_{H^{s-1}}\leq C_{H^{s-1}}<\infty,\quad\forall t\in [0,\infty).
\end{equation}
We estimate the time evolution of $\sum_\al|\na^s n_\al|_2^2$ using the GNS inequality and HLS inequality as follows:
\begin{align*}
\frac{d}{dt}\sum_\al|\na^s n_{\al}|_2^2\leq &-\sum_\al |\na ^{s+1} n_\al|_2^2 +\sum_\al |\na c_\al|_\infty^2|\na^s n_\al|_2^2+\sum_\al \sum_{\ell=2}^{s+1}|\na^{\ell}c_\al|_4^2|\na^{s+1-\ell}n_{\al}|_4^2\\
\lesssim & -\sum_\al |\na ^{s+1} n_\al|_2^2 +\sum_{\al,\beta }|b_{\al\beta}|(M_\beta^2+C_\infty^2)|\na ^s n_\al|_2^2 \\
& +\sum_{\al,\beta}\sum_{\ell=2}^{s+1}|b_{\al\beta}|\cdot|\na^{\ell-1}n_\beta|_{4/3}^2|\na ^{s+1-\ell} n_\al|_4^2\\
\lesssim &-\sum_\al \frac{|\na^s n_\al|_2^{2+2/s}}{C_{GNS}|n_\al|_2^{2/s}}+\sum_\al |\na ^s n_\al|_2^2+\sum_\al|n_\al|_2^2.
\end{align*}
Since $\sum_\al|n_\al|_{L_t^\infty(0,\infty;L^2_x)}\leq C_\infty+\sum_\al M_\al$, we have that 
\[
\sum_\al |\na^s n_\al(t)|_{2}\leq C_{H^s}(C_\infty, \sum_\al|\na ^s n_{\al 0}|_2,\mathbf{M},\mathbf{B})<\infty
\]
 for all $t\in [0,\infty)$. This completes the proof of the theorem.
\end{proof}
We conclude with a remark concerning the long time behavior of the solutions. We can rewrite the equation \eqref{EQ:KS_multiple_groups} in the self-similar variables as in Section 6 \eqref{EQ:KS_multiple_groups_self_similar_variable}. Applying similar techniques from the proof of Theorem \ref{Theorem_essentially_negative_B} yields that the solutions $\mathbf{n}$ decay in $L^2$, i.e.,
\begin{equation}
\sum_\al|n_\al(t)|_2^2\leq\frac{C}{1+t},\quad t\in\rr_+.
\end{equation}
Here $C$ is a constant which only depends on the initial data. We sketch the proof as follows. As in Section 6, the goal is to show that $\sum_\al|N_\al|_{L^2(dX)}^2$ is uniformly bounded in time $\tau\in[0,\infty)$. For the sake of simplicity, we use $|\cdot|_p$ to denote $|\cdot|_{L^p(dX)}$. First we estimate the $L^p$ norms of the solutions $n_{\al^0}$, $\al^0\in\mathcal{I}^{(0)}$. Combining standard $L^p$ energy estimates, Nash inequality and the fact that $b_{\al^0\beta}\leq 0$ for all $\beta\in\mathcal{I}$ yields that
\begin{align*}
\frac{1}{2p}\frac{d}{d\tau}|(N_{\al^0})^p|_2^2= &-\frac{2p-1}{p^2}|\na(N_{\al^0})^p|_2^2+\frac{2(2p-1)}{2p}|(N_{\al^0})^p|_2^2+\frac{2p-1}{2p}\sum_\beta b_{\al^0\beta}\int N_{\al^0}^{2p} N_{\beta}dX\\
\leq&-\frac{2p-1}{p^2}\frac{|(N_{\al^0})^p|_2^4}{C_N|(N_{\al^0})^p|_1^2}+\frac{2(2p-1)}{2p}|(N_{\al^0})^p|_2^2.
\end{align*}
This estimates yields that
\begin{align*}
\sup_{\tau\in[0,\infty)}|N_{\al^0}(\tau)|_{2p}^{2p}\leq \max\{p C_N\sup_{\tau\in[0,\infty)}|(N_{\al^0})(\tau)|_p^{2p}, |N_{\al^0}(0)|_{2p}^{2p}\}.
\end{align*}
Since $|N_{\al^0}|_1=M_{\al^0}<\infty$ and $|N_{\al^0}(0)|_{L^1\cap L^\infty}<\infty$, we can apply the Moser-Alikakos iteration to obtain that
\begin{align}
\sup_{\tau\in[0,\infty)}|N_{\al^0}(\tau)|_{L^1\cap L^\infty}\leq C_{\mathcal{I}^{(0)}}<\infty.
\end{align}
Now applying the same iteration technique as the one in the proof of Theorem \ref{Theorem_essentially_negative_B} yields the result.

\begin{rmk}\label{rmk:conjecture}
Direct application of the free energy method yields following general result:\newline
Assume that the matrix $\mathbf{B}$ only has positive entries, i.e., $\mathbf{B}=\mathbf{B}_+$ case. Define the support of a symmetric matrix $C_{m\times m}$ to be the indices of the rows such that there exists non-zero entries in this row, i.e., $supp(C)=\{i\in\{1,2,...,m\}|C_{ij}\neq 0 \text{ for some } j\in\{1,2,...,m\} \}$. If there exists a sequence of positive symmetric matrices $\{\mathbf{B}_\ell\}_{\ell\in \mathcal{L}}$ such that $\sum_{\ell\in\mathcal{L}} \mathbf{B}_\ell =\mathbf{B}$ and 
\[
{Q_{\mathbf{B}_\ell, \mathbf{M}}[\mathcal{J}\cap supp \mathbf{B}_\ell]<Q_{\mathbf{B}_\ell,\mathbf{M}} [\mathcal{I}\cap supp \mathbf{B}_\ell]}<C_\ell<8\pi,
\]
 for all $\emptyset\neq \mathcal{J}\subsetneqq \mathcal{I}$ and  $\forall \ell\in\mathcal{L}$, and 
 \[
 \sum_{\ell\in\mathcal{L}}C_\ell\mathbf{1}_{\al \in supp \mathbf{B}_\ell}<8\pi,
 \]
  for $\forall\al \in\mathcal{I}$, then there exists a global solution. A conjecture is that if this  condition involving the \emph{strict} inequalities fails, namely, if some of the strict inequalities $<$'s are replaced by $>$'s, then there must be a finite time blow-up.
\end{rmk}

\bibliographystyle{abbrv}
\bibliography{nonlocal_eqns,JacobBib,SimingBib}

\def\cprime{$'$}
\begin{thebibliography}{10}

\bibitem{ArenasStevensVelazquez09}
E.~E.~E. Arenas, A.~Stevens, and J.~J.~L. Vel\'azquez.
\newblock Simultaneous finite time blow-up in a two-species model for
  chemotaxis.
\newblock {\em Analysis (Munich), 29}, 2009.

\bibitem{BaiWinkler16}
X.~Bai and M.~Winkler.
\newblock Equilibration in a fully parabolic two-species chemotaxis system with
  competitive kinetics.
\newblock {\em Indiana University Mathematics Journal 65}, 2016.

\bibitem{BedrossianIA10}
J.~Bedrossian.
\newblock Intermediate asymptotics for critical and supercritical aggregation
  equations and {Patlak-Keller-Segel} models.
\newblock {\em Comm. Math. Sci.}, 9:1143--1161, 2011.

\bibitem{BlanchetCarrilloMasmoudi08}
A.~Blanchet, J.~Carrillo, and N.~Masmoudi.
\newblock Infinite time aggregation for the critical {Patlak-Keller-Segel}
  model in $\mathbb{R}^2$.
\newblock {\em Comm. Pure Appl. Math.}, 61:1449--1481, 2008.

\bibitem{BlanchetEJDE06}
A.~Blanchet, J.~Dolbeault, and B.~Perthame.
\newblock Two-dimensional {Keller-Segel} model: Optimal critical mass and
  qualitative properties of the solutions.
\newblock {\em E. J. Diff. Eqn}, 2006(44):1--32, 2006.

\bibitem{CalvezCorriasEbde12}
V.~Calvez, L.~Corrias, and M.~A. Ebde.
\newblock Blow-up, concentration phenomenon and global existence for the
  {Keller-Segel} model in high dimension.
\newblock {\em Communications in Partial Differential Equations Vol. 37 , Iss.
  4}, pages 561--584, 2012.

\bibitem{CarrilloRosado10}
J.~Carrillo and J.~Rosado.
\newblock Uniqueness of bounded solutions to aggregation equations by optimal
  transport methods.
\newblock {\em Proc. 5th Euro. Congress of Math. Amsterdam}, 2008.

\bibitem{chertock2018high}
A.~Chertock, Y.~Epshteyn, H.~Hu, and A.~Kurganov.
\newblock High-order positivity-preserving hybrid
  finite-volume-finite-difference methods for chemotaxis systems.
\newblock {\em Advances in Computational Mathematics}, 44(1):327--350, 2018.

\bibitem{ConcaEspejoVilches11}
C.~Conca, E.~Espejo, and K.~Vilches.
\newblock Remarks on the blowup and global existence for a two species
  chemotactic keller-segel system in r2.
\newblock {\em European J. Appl. Math., 22}, 2011.

\bibitem{corrias2014existence}
L.~Corrias, M.~Escobedo, and J.~Matos.
\newblock Existence, uniqueness and asymptotic behavior of the solutions to the
  fully parabolic {Keller-Segel} system in the plane.
\newblock {\em Journal of Differential Equations Volume 257, Issue 6}, pages
  1840--1878, 2014.

\bibitem{CorriasPerthame08}
L.~Corrias and B.~Perthame.
\newblock Asymptotic decay for the solutions of the parabolic-parabolic
  {Keller-Segel} chemotaxis system in critical spaces.
\newblock {\em Mathematical and Computer Modelling, 47(7)}, pages 755--764,
  2008.

\bibitem{CTGFD2017}
K.~Z. Coytea, H.~Tabuteaue, E.~A. Gaffneyb, K.~R. Fostera, and W.~M. Durhama.
\newblock Microbial competition in porous environments can select against rapid
  biofilm growth.
\newblock {\em Proc. Natl. Acad. Sci. USA, vol. 114 no. 2, E161–E170, doi:
  10.1073/pnas.1525228113}, 2017.

\bibitem{EganaMischler16}
G.~Egana and S.~Mischler.
\newblock Uniqueness and long time asymptotic for the {Keller-Segel} equation:
  the parabolic-elliptic case.
\newblock {\em Arch. Ration. Mech. Anal. 220 (2016), no. 3}, pages 1159--1194.

\bibitem{EspejoVilchesConca13}
E.~E. Espejo, K.~Vilches, and C.~Conca.
\newblock Sharp condition for blow-up and global existence in a two species
  chemotactic keller-segel system in $\mathbb{R}^2$.
\newblock {\em European J. Appl. Math., 24}, 2013.

\bibitem{FasanoManciniPrimicerio04}
A.~Fasano, A.~Mancini, and M.~Primicerio.
\newblock Equilibrium of two populations subject to chemotaxis.
\newblock {\em Math. Models Methods Appl. Sci., 14}, 2004.

\bibitem{HeThesis}
S.~He.
\newblock Mixing, flocking and cooperation - analytical studies of transport
  phenomena in biology.
\newblock {\em Ph.D. Thesis, University of Maryland, College Park}, June 2018.

\bibitem{HillenPainter09}
T.~Hillen and K.~Painter.
\newblock A users guide to pde models for chemotaxis.
\newblock {\em Journal of mathematical biology, 58(1-2)}, pages 183--217, 2009.

\bibitem{Hortsmann}
D.~Horstmann.
\newblock {From 1970 until present: the Keller-Segel model in chemotaxis and
  its consequences}.
\newblock {\em I, Jahresber. Deutsch. Math.-Verein}, 105(3):103--165, 2003.

\bibitem{JagerLuckhaus92}
W.~J\"ager and S.~Luckhaus.
\newblock On explosions of solutions to a system of partial differential
  equations modelling chemotaxis.
\newblock {\em Trans. Amer. Math. Soc.}, 329(2):819--824, 1992.

\bibitem{KozonoSugiyama09}
H.~Kozono and Y.~Sugiyama.
\newblock Global strong solution to the semi-linear {Keller-Segel} system of
  parabolic-parabolic type with small data in scale invariant spaces.
\newblock {\em Journal of Differential Equations Volume 247, Issue 1}, pages
  1--32, 2009.

\bibitem{KurganovMedvidova14}
A.~Kurganov and M.~Lukacova-Medvidova.
\newblock Numerical study of two-species chemotaxis models.
\newblock {\em Discrete and Continuous Dynamical Systems. Series B., 19}, 2014.

\bibitem{Nagai95}
T.~Nagai.
\newblock Blow-up of radially symmetric solutions to a chemotaxis system.
\newblock {\em Adv. Math. Sci. Appl.}, 5(2):581--601, 1995.

\bibitem{NagaiSenbaYoshida97}
T.~Nagai, T.~Senba, and K.~Yoshida.
\newblock Application of the {Trudinger-Moser} inequality to a parabolic system
  of chemotaxis.
\newblock {\em Funkcialaj Ekvacioj, 40}, pages 411--433, 1997.

\bibitem{Naito06}
Y.~Naito.
\newblock Asymptotically self-similar solutions for the parabolic system
  modelling chemotaxis.
\newblock {\em Self-similar solutions of nonlinear PDE, Banach Center
  Publications, Institute of mathematics, Polish academy of sciences, Warszawa,
  74}, pages 149--160.

\bibitem{SW05}
I.~Shafrir and G.~Wolansky.
\newblock {Moser-Trudinger} and logarithmic {HLS} inequalities for systems.
\newblock {\em Journal of the European Mathematical Society}, 7, 2005.

\bibitem{TaoWinkler12}
Y.~Tao and M.~Winkler.
\newblock Boundedness in a quasilinear parabolic-parabolic {Keller-Segel}
  system with subcritical sensitivity.
\newblock {\em Journal of Differential Equations Volume 252, Issue 1}, pages
  692--715, 2012.

\bibitem{Wolansky02}
G.~Wolansky.
\newblock Multi-components chemotactic system in the absence of conflicts.
\newblock {\em European Journal of Applied Mathematics, Volume 13, Issue 6},
  2002.

\end{thebibliography}

\end{document}